  \theoremstyle{remark}         \newtheorem{remark}{Remark}[section]
  \theoremstyle{remark} 		
  \theoremstyle{definition}		\newtheorem{definition}{Definition}[section]
  \theoremstyle{plain}			\newtheorem{theorem}{Theorem}[section]
  \theoremstyle{plain}			\newtheorem{lemma}[theorem]{Lemma}
  \theoremstyle{plain}			\newtheorem{corollary}[theorem]{Corollary}
  \theoremstyle{plain}			\newtheorem{proposition}[theorem]{Proposition}
  \DeclareMathOperator{\im}{im}				        	    %% define the image
  \DeclareMathOperator{\Hom}{Hom}				            %% define the Hom operator
  \DeclareMathOperator{\Ext}{Ext}				            %% define the Ext operator
  \DeclareMathOperator{\Tor}{Tor}				            %% define the Tor operator
  \DeclareMathOperator{\Aut}{Aut}				            %% define the automorphism
  \DeclareMathOperator{\id}{id}				                %% define the identity map
  \newcommand{\varphantom}[1]{\mathrel{\phantom{#1}}}
  \newcommand{\pl}{\partial}
  \newcommand{\To}{\longrightarrow}
  \newcommand{\ot}{\otimes}
  \newcommand{\diffd}{\mathsf{d}}			             	%% define de Rham differential operator d
  \newcommand{\al}{\alpha}
  \newcommand{\be}{\beta}
  \newcommand{\ga}{\gamma}
  \newcommand{\si}{\sigma}
  \newcommand{\e}{\varepsilon}
  \newcommand{\De}{\Delta}
  \newcommand{\lam}{\lambda}
  \newcommand{\vphi}{\varphi}
  \newcommand{\mbb}[1]{\mathbb{#1}}			    	%% bold
  \newcommand{\mbf}[1]{\mathbf{#1}}			    	%% bold
  \newcommand{\mc}[1]{\mathcal{#1}}			    	%% mc style for mathcal
  \newcommand{\mf}[1]{\mathfrak{#1}}				%% mf style for mathfrak
  \newcommand{\kk}{\Bbbk} 			    	        %% empty ``k''
  \newcommand{\nan}{\mathbb{N}}			    	    %% natural numbers
  \newcommand{\inn}{\mathbb{Z}} 				    %% integral numbers
  \newcommand{\con}{\mathbb{C}}			    	    %% complex numbers
\numberwithin{equation}{section}
\newcommand{\Dell}{{}^\si\!\De}
\newcommand{\Delr}{\De^\si}
\newcommand{\Delb}{{}^\si\!\De^\si}
\newcommand{\Tot}{\mathrm{Tot}\,}
\newcommand{\nc}{\mathrm{nc}}
\newcommand{\difl}{{}^\si\!\diffd}
\newcommand{\difr}{\diffd^\si\!}
\newcommand{\difb}{{}^\si\!\diffd^\si\!}
\newcommand{\sbu}{{\scriptscriptstyle\bullet}}
\title[On homological smoothness of generalized Weyl algebras]{On homological smoothness of generalized Weyl algebras over polynomial algebras in two variables}
\author[Liyu Liu]{Liyu Liu}
\address{School of Mathematical Science, Yangzhou University, No.\ 180 Siwangting Road, 225002 Yangzhou, Jiangsu, China}
\email{lyliu@yzu.edu.cn}
\thanks{The author acknowledges the supports of the Natural Science Foundation of China No.\ 11501492, the Natural Science Foundation of Jiangsu Province No.\ BK20150435 and the Natural Science Foundation for Universities in Jiangsu Province No.\ 15KJB110022. He is grateful to the referee for the valuable comments. He would like to send his thanks to Xuefeng Mao, Quanshui Wu, and James Zhang for helpful conversations, special thanks to Shengyun Jiao for her Master research that is related to \S\ref{subsec:Mpq}.}
\begin{document}

	\begin{abstract}
		Homological smoothness and twisted Calabi-Yau property of generalized Weyl algebras over polynomial algebras in two variables is studied. A necessary and sufficient condition to be homologically smooth is given. The Nakayama automorphisms of such algebras are also computed in terms of the Jacobian determinants of defining automorphisms. 
	\end{abstract}
	\keywords{Generalized Weyl algebra, Homological smoothness, Twisted Calabi-Yau algebra}
\subjclass[2010]{Primary 16S38, 16E10, 16E40}
 % 16S38 Rings arising from non-commutative algebraic geometry
 %	16E10 Homological dimension
 %	16E40 (Co)homology of rings and algebras
 	\maketitle
	
\section{Introduction}

Motivated by the study of algebras analogous with the classical Weyl algebra $A_1(\kk)$ over a field $\kk$, Bavula introduced in \cite{Bavula:GWA-def} the notion of (degree one) generalized Weyl algebras over the polynomial algebra $\kk[z]$. Later on, generalized Weyl algebras over any algebra $B$ were defined in \cite{Bavula:GWA-tensor-product}. Roughly speaking, a generalized Weyl algebra over $B$ is an extension of $B$ by two formal variables $x$, $y$, parameterized by an automorphism $\si$ on $B$ and a central element $\vphi\in B$, denoted by $B(\si,\vphi)$. Many people have intensively studied the situation $B=\kk[z]$, and  such generalized Weyl algebras are denoted by $W_{(1)}$ in this paper. It is illustrated in \cite{Bavula:GWA-def} that the global dimensions of $W_{(1)}$ are equal to $1$, $2$, or $\infty$. and the latter occurs if and only if the defining polynomial $\vphi$ admits a multiple root (see also \cite{Bavula:GWA-tensor-product}, \cite{Hodge:Kleinian-singularities}, \cite{Solotar:Hochschild-homology-GWA-quantum}). Their Hochschild homology and cohomology were computed in \cite{Farinati:Hochschid-homology-GWA}, \cite{Solotar:Hochschild-homology-GWA-quantum}. In particular, a remarkable result in \cite{Farinati:Hochschid-homology-GWA} is that to assure a duality between its Hochschild homology and cohomology (with coefficients in $W_{(1)}$ itself), $W_{(1)}$ should have finite projective dimension as a bimodule over itself, or equivalently, be homologically smooth. To answer \cite[Question 2]{Krahmer:qh-space} (see also \cite{L-Shen-Wu:homo-quan-sphere}), the author explained in \cite{L:gwa-def} when $W_{(1)}$ is homologically smooth; a necessary and sufficient condition was given, under which a duality between its Hochschild homology and cohomology with coefficients in any bimodule was established. Global dimensions of generalized Weyl algebras $W_{(n)}$ over $B=\kk[z_1,\ldots,z_n]$ were also examined by Bavula \cite{Bavula:gldim-polyn}.

Among the results mentioned above and others, $B$ is always required to be commutative. This is not incidental. From an algebraic point of view, noncommutative rings are more rigid than commutative ones, namely, a commutative ring possesses ``more'' automorphisms and central elements than a noncommutative ring. A typical example is: if $B$ is the generic quantum 2-plane $\kk_q[z_1,z_2]$, then $\si$ is necessarily given by $\si(z_1)=az_1$, $\si(z_2)=bz_2$ for some nonzero scalars $a$, $b\in\kk$, and the central element $\vphi$ must be in $\kk$. One concludes that in this case any generalized Weyl algebra over $\kk_q[z_1,z_2]$ is isomorphic to a localization of a quantum 3-space. Hence the noncommutativity of $B$ usually makes the research of generalized Weyl algebras trivial. So generalized Weyl algebras over commutative rings are more interesting. According to papers on this topic, especially the papers by Bavula, two spaces are extremely important in researching generalized Weyl algebras. One is the space $\mathrm{MaxSpec}(B)/\langle \si\rangle$ of orbits where $\langle \si\rangle$ is the cyclic group generated by $\si$ acting on the space $\mathrm{MaxSpec}(B)$ of maximum spectrum naturally; the other is the $\si$-stable space $\{\si^n(\vphi)\mid n\in\inn\}$. Normally, many properties, such as global dimensions and irreducible representations, depend on both spaces simultaneously. But there is an exception. It is illustrated in \cite{L:gwa-def} that whether a generalized Weyl algebra $W_{(1)}$ is homologically smooth depends only on $\vphi$, also, its Nakayama automorphism depends only on $\si$.

This paper is a sequel to \cite{L:gwa-def}, in which we plan to study the homological smoothness of generalized Weyl algebras $W_{(2)}$ over the polynomial algebra  $\kk[z_1,z_2]$. Homological smoothness, which is a noncommutative generalization of smoothness for commutative algebras, plays an important role in homological algebra, mathematical physics, etc. An algebra $A$ is called homologically smooth if $A$ admits a bound resolution by finitely generated projective $A^e$-modules (where $A^e:=A\otimes_{\kk}A^{\mathrm{op}}$ is the enveloping algebra), or equivalently, $A$ is isomorphic to a perfect complex in the derived category $\mc{D}^b(A^e\textrm{-}\mathsf{Mod})$. Recall that in \cite{L:gwa-def} our strategy was to construct a free $W_{(1)}^e$-module resolution of $W_{(1)}$ and then to compute cohomology by it. We will follow the idea for $W_{(2)}$ in this paper. Since the notion of homotopy double complex introduced in \cite{L:gwa-def} is useful to construct a free resolution of $W_{(2)}$, we review it briefly in this paper. After that, noncommutative differential 1-forms and derivations on $\kk[z_1,z_2]$ are introduced, and a noncommutative version of Jacobian determinant is defined accordingly. All of these appear in \S\ref{sec:preliminaries}.

After the preliminaries, we begin \S\ref{sec:homo-smooth} by constructing a free resolution of $W_{(2)}$. %Denote by $\mc{K}_n$ the Koszul complex of the polynomial algebra $\kk[z_1,\ldots,z_n]$. Recall that the rank of $\mc{K}_n$ grows exponentially when $n$ increases,  and hence our computation is more complicated than what we did for $W_{(1)}$ in \cite{L:gwa-def} because $\mc{K}_2$ is used instead of $\mc{K}_1$. 
The resolution is given by Proposition \ref{prop:free-resol-GWA}. Using the resolution, we compute the Hochschild cohomology $H^*(W_{(2)}, M)$ with coefficients in any bimodule $M$.  Notice that $\vphi$ under consideration is a polynomial $\vphi(z_1,z_2)$. Let $\vphi_1$, $\vphi_2$ be the two formal partial derivatives of $\vphi$ with respect to $z_1$, $z_2$, and $(\vphi,\vphi_1,\vphi_2)$ be the ideal of $\kk[z_1,z_2]$ generated by the three elements. Our result is that $H^4(W_{(2)}, M)=0$ if $(\vphi,\vphi_1,\vphi_2)=\kk[z_1,z_2]$. On the other hand, inspired by works of Bavula, we prove that $W_{(2)}$ has infinite global dimension if $(\vphi,\vphi_1,\vphi_2)$ is a proper ideal. Therefore, we obtain the main theorem (Theorem \ref{thm:main-theorem}):

\begin{theorem}
	For any $\si\in\Aut(\kk[z_1,z_2])$, $W_{(2)}$ is homologically smooth if and only if $(\vphi,\vphi_1,\vphi_2)=\kk[z_1,z_2]$.
\end{theorem}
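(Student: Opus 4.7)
The plan is to attack the two implications separately, both building on the explicit free bimodule resolution of $W_{(2)}$ furnished by Proposition~\ref{prop:free-resol-GWA}. The ``if'' direction will be a vanishing statement in top-degree Hochschild cohomology; the ``only if'' direction will exhibit infinite global dimension and then invoke the standard bound $\gldim A \le \pdim_{A^e}A$, valid for any $\kk$-algebra $A$, to rule out homological smoothness.

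For the sufficiency, suppose $(\vphi,\vphi_1,\vphi_2)=\kk[z_1,z_2]$ and fix $a,a_1,a_2\in\kk[z_1,z_2]$ with $a\vphi+a_1\vphi_1+a_2\vphi_2=1$. Applying $\Hom_{W_{(2)}^e}(-,M)$ to the resolution of Proposition~\ref{prop:free-resol-GWA} produces a cochain complex of length $4$ computing $H^*(W_{(2)},M)$. Its terminal differential is assembled from operators built from $\vphi$ and its partials $\vphi_1,\vphi_2$ --- the noncommutative Jacobian data of \S\ref{sec:preliminaries} --- twisted by the automorphism $\si$. The relation $a\vphi+a_1\vphi_1+a_2\vphi_2=1$ then supplies an explicit splitting of this differential, proving $H^4(W_{(2)},M)=0$ for every bimodule $M$. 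Since the resolution has length $4$, higher Hochschild cohomology vanishes automatically, giving $\pdim_{W_{(2)}^e}W_{(2)}\le 3$ and hence homological smoothness.

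For the necessity, suppose $(\vphi,\vphi_1,\vphi_2)\subsetneq\kk[z_1,z_2]$, and pick a maximal ideal $\mf m\subset\kk[z_1,z_2]$ containing $\vphi,\vphi_1,\vphi_2$; geometrically $\mf m$ is a singular point of the affine hypersurface $\{\vphi=0\}$. Following Bavula's strategy in \cite{Bavula:gldim-polyn}, the simultaneous vanishing of $\vphi$ and its two partials at $\mf m$ enables the construction of a $W_{(2)}$-module --- a simple module supported along the $\si$-orbit of $\mf m$ --- whose minimal projective resolution does not terminate, so that $\gldim W_{(2)}=\infty$. The bound $\gldim W_{(2)}\le\pdim_{W_{(2)}^e}W_{(2)}$ then forces $\pdim_{W_{(2)}^e}W_{(2)}=\infty$, and $W_{(2)}$ fails to be homologically smooth.

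The principal obstacle is computational: the resolution of Proposition~\ref{prop:free-resol-GWA} will be built by totalising a homotopy double complex that combines a Koszul-type resolution of $\kk[z_1,z_2]$ with the $(x,y)$-generator resolution of \cite{L:gwa-def}, twisted by $\si$. One must track the noncommutative $1$-forms $\dd z_1,\dd z_2$ and the noncommutative Jacobian determinant of \S\ref{sec:preliminaries} with enough precision to identify the top cohomological differential as exactly the operator whose cokernel is controlled by $(\vphi,\vphi_1,\vphi_2)$. Once this identification is secured, constructing the contracting homotopy from $a,a_1,a_2$ in a $\si$-equivariant manner is routine bookkeeping; the necessity direction likewise reduces, after selecting the right test module at $\mf m$, to a local infinite-syzygy computation in the spirit of \cite{Bavula:gldim-polyn}.
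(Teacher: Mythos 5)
Your overall plan runs parallel to the paper's (compute a degree-$4$ vanishing from the Bezout identity for sufficiency; show infinite global dimension and use $\gldim W_{(2)}\le\pdim_{W_{(2)}^e}W_{(2)}$ for necessity), but the sufficiency half rests on a false structural claim: the resolution of Proposition \ref{prop:free-resol-GWA} does \emph{not} have length $4$. It is infinite, being alternate (2-periodic) in $[3,+\infty)$, because the row complex $C_\sbu$ of Proposition \ref{propo:reg-GWA} is itself periodic; if $W_{(2)}$ admitted a length-$4$ resolution by finitely generated free bimodules, it would be homologically smooth for every $\vphi\neq0$, contradicting the very ``only if'' direction you are proving. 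Consequently there is no ``terminal differential'' to split, and ``higher Hochschild cohomology vanishes automatically'' is not available. The correct deduction, and the one the paper makes after Proposition \ref{prop:smooth-condition}, is the standard criterion: $\Ext^4_{W_{(2)}^e}(W_{(2)},M)=0$ for all $M$ forces the third syzygy of the infinite free resolution to be projective, and it is finitely generated because the resolution is by finitely generated frees, whence a length-$3$ finitely generated projective resolution. Your idea of contracting a $4$-cocycle with the coefficients of $\al\vphi+\be_1\vphi_1+\be_2\vphi_2=1$ is indeed the paper's mechanism, but note it is not mere bookkeeping: the cocycle satisfies eight coupled equations \eqref{eq:cocyc-1}--\eqref{eq:cocyc-8}, and the correcting $3$-cochain must use the $\si$-twisted data $\si(\be_i)$, $\si(\vphi_i)$ together with the identities of Lemma \ref{lem:Del-partial}.

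The necessity half is where the genuine gap lies: you assert, appealing to ``Bavula's strategy'', that some simple module supported on the $\si$-orbit of $\mf m$ has a non-terminating minimal resolution, but no such module is constructed and no reason for non-termination is given; the cited reference is used by the paper only for the degenerate case $\vphi=0$ (Lemma \ref{lem:infty-gld-1}), which your sketch does not separate out even though Proposition \ref{prop:free-resol-GWA} fails there. For $\vphi\neq0$ the paper argues differently: it exploits the 2-periodicity of $\Tot\mc P_{\sbu\sbu}$ to reduce to showing $\Tor_4^{W_{(2)}}(W_{(2)}/I_r,\,W_{(2)}/I_l)\neq0$ for explicit one-sided cyclic modules attached to a maximal ideal $\mf m=(z_1-\lam_1,z_2-\lam_2)\supseteq(\vphi,\vphi_1,\vphi_2)$, the nonvanishing being detected by $(\e_{\mf m}\ot\e_{\mf m})(\De_i(\vphi))=0$ and the $\inn$-grading with $|x|=1$, $|y|=-1$. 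Since this uses rational points, the paper first assumes $\kk$ algebraically closed and then descends via $\mbb K\ot\Tor_*^{W_{(2)}}(M,N)\cong\Tor_*^{W_{(2)}^{\mbb K}}(M^{\mbb K},N^{\mbb K})$; your proposal addresses neither the base-field issue nor the $\vphi=0$ case. Until a concrete module (or pair of modules) with the required infinite homological behaviour is produced, the ``only if'' direction remains unproved.
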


Observe that $W_{(n)}$ is commutative if $\si=\id$, and in this case we write it as $\overline{W}_{(n)}$. We remark that if $\kk$ is of characteristic zero, then the condition $(\vphi,\vphi_1,\vphi_2)=\kk[z_1,z_2]$ is exactly equivalent to that  $\overline{W}_{(2)}$ is smooth in the commutative sense. Coincidentally, it is the same case with $\overline{W}_{(1)}$, which has been explained in \cite{L:gwa-def} using deformation theory. We regard $W_{(2)}$ as a deformation of $\overline{W}_{(2)}$ by the automorphism $\si$. The smoothness of $\overline{W}_{(2)}$ is equivalent to the homological smoothness of $W_{(2)}$. See Remark \ref{rmk:two-smoothness}.

The Nakayama automorphism of an algebra is an important invariant. It is related with the study of rigid dualizing complexes, twisted Poincar\'e duality, Hopf algebra actions on Artin-Schelter regular algebras, and so forth. Examples of algebras that have Nakayama automorphisms are: noetherian Artin-Schelter Gorenstein algebras, many noetherian Hopf algebras, some (co)invariant subalgebras of Artin-Schelter regular algebras, PBW deformations of some graded algebras. In general, the computation of Nakayama automorphisms is not easy. We refer to \cite{Bocklandt-Schedler-Wemyss:superpotential}, \cite{Brown-Zhang:AS-Gorenstein-Hopf}, \cite{Kirkman-Kuzmannovich-Zhang:Nak-rigidity}, \cite{Shen-Lu:Nak-PBW}, \cite{Lv-Mao-Zhang:NAK}, \cite{Lv-Mao-Zhang:Nak-graded}, \cite{Reyes-Rogalski-Zhang:skew-CY-homo-identity}, \cite{Yekutieli:rigid-dualizing-complex-universal-enveloping-algebra} and the references therein for the progress on this topic during the past years. The goal in \S\ref{sec:Nak-auto} is to compute the Nakayama automorphism of $W_{(2)}$. By virtue of the free resolution constructed in \S\ref{sec:homo-smooth}, we compute the group $\Ext^3_{W_{(2)}^e}(W_{(2)}, W_{(2)}^e)$. An explicit formula for Nakayama automorphism is hence obtained. As a consequence, we have (Theorem \ref{thm:3-tCY}, Corollaries \ref{cor:3-CY}, \ref{cor:BV})

\begin{theorem}
	Let $W_{(2)}=\kk[z_1,z_2](\si,\vphi)$ be a generalized Weyl algebra, and $J$ be the Jacobian determinant of $\si$. Then
	\[
	\Ext_{W_{(2)}^e}^i(W_{(2)}, W_{(2)}^e)\cong\begin{cases}
	0, & i\neq 3, \\ W_{(2)}^\nu, & i=3.
	\end{cases}
	\]
	where $\nu\in \Aut(W_{(2)})$ is the Nakayama automorphism, given by
	\[
	\nu(x)=Jx,\quad \nu(y)=J^{-1}y,\quad \nu(z_1)=z_1,\quad \nu(z_2)=z_2.
	\]
	In particular, if $(\vphi,\vphi_1,\vphi_2)=\kk[z_1,z_2]$, then
	\begin{enumerate}
		\item $W_{(2)}$ is twisted $3$-Calabi-Yau;
		\item $W_{(2)}$ is  Calabi-Yau if and only if $J=1$;
		\item the Hochoschild cohomology $HH^{\sbu}(W_{(2)})$ has a Batalin-Vilkovisky structure.
	\end{enumerate}
\end{theorem}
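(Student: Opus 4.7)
The plan is to apply the functor $\Hom_{W_{(2)}^e}(-,W_{(2)}^e)$ to the finite free $W_{(2)}^e$-resolution $P_\sbu\to W_{(2)}$ provided by Proposition \ref{prop:free-resol-GWA}, and to compute the cohomology of the resulting cochain complex $Q^\sbu:=\Hom_{W_{(2)}^e}(P_\sbu, W_{(2)}^e)$. Since each $P_i$ is a finite direct sum of copies of $W_{(2)}^e$ indexed by wedges of the symbols $\diffd z_1$, $\diffd z_2$, $\diffd x$, $\diffd y$ arising from the homotopy double complex of \S\ref{sec:preliminaries}, the complex $Q^\sbu$ consists of free $W_{(2)}^e$-modules of the same ranks, whose differentials are the transposes of those in $P_\sbu$. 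The whole thing is concentrated in cohomological degrees $0$ through $3$, so these are the only possibly nonzero Ext groups, and each $Q^i$ (hence each $\Ext^i$) inherits a $W_{(2)}$-bimodule structure from the outer $W_{(2)}^e$-action on $W_{(2)}^e$.

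The vanishing $\Ext^i=0$ for $0\le i\le 2$ can then be established along the same lines as in the $W_{(1)}$-case of \cite{L:gwa-def}: the transposed differentials are built out of right multiplication by $x$, $y$, $\vphi$, $\si(\vphi)$ and out of the operators induced by $\si$ and $\si^{-1}$, and one checks by direct manipulation that the corresponding kernel/image quotients are trivial. This step uses only that $W_{(2)}$ is free over $\kk[z_1,z_2]$ in the natural filtration and that $\si$ is an automorphism, so it does not require the smoothness hypothesis. At the top, $P_3$ has rank one and no outgoing differential, so $\Ext^3$ is the cokernel of the final transposed differential landing in $Q^3\cong W_{(2)}^e$.

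The main obstacle is to identify this cokernel explicitly as $W_{(2)}^\nu$ with the prescribed $\nu$. The last differential of $P_\sbu$ combines the two parts of the homotopy double complex: from the polynomial Koszul direction, the action of $\si$ on the top form $\diffd z_1\wedge\diffd z_2$ factors through multiplication by the Jacobian $J$, while from the generalized Weyl direction the defining relations $xy=\vphi$, $yx=\si(\vphi)$, $xz_i=\si(z_i)x$, $yz_i=\si^{-1}(z_i)y$ contribute a twist by $\si^{\pm 1}$. Transposing the differential moves each twist from one tensor factor of $W_{(2)}^e=W_{(2)}\ot W_{(2)}^{\mathrm{op}}$ to the other, and tracking how the generators slide across the tensor yields precisely the bimodule $W_{(2)}^\nu$ with $\nu(x)=Jx$, $\nu(y)=J^{-1}y$, $\nu(z_i)=z_i$. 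Here $J\in\kk^\times$ because the Jacobian of any automorphism of $\kk[z_1,z_2]$ is a nonzero scalar, so $\nu$ is a well defined algebra automorphism, and its compatibility with the defining relations of $W_{(2)}$ is immediate.

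The three consequences then follow at once. Under the hypothesis $(\vphi,\vphi_1,\vphi_2)=\kk[z_1,z_2]$, Theorem \ref{thm:main-theorem} provides homological smoothness, and the Ext computation just described matches the very definition of a twisted $3$-Calabi-Yau algebra, yielding (1). Item (2) is immediate from the explicit formula for $\nu$: since $\nu$ fixes $z_1$ and $z_2$ in any case, $\nu=\id$ holds if and only if $J=1$. For (3), once $W_{(2)}$ is twisted $3$-Calabi-Yau with a diagonalisable Nakayama automorphism as above, general results on Batalin-Vilkovisky structures for twisted Calabi-Yau algebras (in the spirit of Kowalzig-Kr\"ahmer) endow $HH^\sbu(W_{(2)})$ with the claimed structure, so the last statement reduces to verifying that our $\nu$ fits the required hypotheses, which is routine.
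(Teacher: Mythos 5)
Your overall strategy (dualize a free $W_{(2)}^e$-resolution and read off $\Ext^\sbu_{W_{(2)}^e}(W_{(2)},W_{(2)}^e)$ with its bimodule structure) is in the spirit of the paper, but the resolution you describe is not the one Proposition \ref{prop:free-resol-GWA} provides, and this breaks the argument at two essential points. The complex $\Tot\mc P_{\sbu\sbu}$ is \emph{not} concentrated in degrees $0$ through $3$ and is not a Koszul-type complex indexed by wedges of $\diffd z_1,\diffd z_2,\diffd x,\diffd y$ with a rank-one top term: its degree-$n$ term for $n\geq 3$ is $\mc P_{n,0}\oplus\mc P_{n-1,1}\oplus\mc P_{n-2,2}\cong(W_{(2)}^e)^8$, and the complex is alternate (hence infinite) in $[3,+\infty)$. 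Consequently the vanishing of $\Ext^i$ for $i\geq 4$ does not come for free from the length of the resolution; this is exactly why the paper runs the spectral sequence of the column filtration of $\mc Q_{W_{(2)}^e}^{\sbu\sbu}$ and uses that $B=\kk[z_1,z_2]$ is $2$-Calabi-Yau (so that $E_1^{pq}=0$ unless $q=2$), reducing everything to the single row $E_1^{\sbu 2}$. Likewise, $\Ext^3$ is not the cokernel of a single transposed differential landing in a rank-one free module, so the step where you "track how the generators slide across the tensor" has no complex of the shape you assume to act on.

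The second, related gap is that the identification $\Ext^3\cong W_{(2)}^\nu$ with $\nu(x)=Jx$, $\nu(y)=J^{-1}y$, $\nu(z_i)=z_i$ is the actual content of the theorem, and in your proposal it is asserted rather than derived. In the paper this requires computing $E_2^{12}$ concretely: describing $E_1^{02}$ and $E_1^{p2}$ as explicit quotients, writing the induced differentials $d^0$, $d^1$ (where the Jacobian $J$ enters through $d^h_{02}$ and the noncommutative Jacobian $J_{\nc}$), normalizing each cocycle class to a canonical representative as in \eqref{eq:Phi-y}, and then verifying by a lengthy direct computation that the explicit map $\Phi$ is an isomorphism of $W_{(2)}$-bimodules onto $W_{(2)}^\nu$ (checking compatibility with right multiplication by $z_1$, $z_2$, $x$, $y$ separately, using $yx=\vphi$, $xy=\si(\vphi)$). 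Your remark that $J\in\kk^\times$ (so $\nu$ is a well-defined automorphism) is correct and is also used implicitly in the paper, and your deduction of items (1)--(3) from Theorem \ref{thm:main-theorem} and the Kowalzig--Kr\"ahmer result is fine; but as written the core computation establishing both the vanishing in degrees $\neq 3$ (for an \emph{arbitrary} $W_{(2)}$, without the smoothness hypothesis) and the precise form of $\nu$ is missing, so the proposal does not yet constitute a proof.
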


In the final section \S\ref{sec:app}, we study the homological smoothness and Nakayama automorphisms of some concrete algebras, by using the two previous theorems. These algebras contain the quantum groups $\mc{O}_q(SL_2)$, $U(\mf{sl}_2)$, noetherian down-up algebras, quantum lens space, and others. In each case our main theorems give alternative proofs of results from the literature.

\section{Preliminaries}\label{sec:preliminaries}
Throughout, $\kk$ is a field, and all algebras are over $\kk$ unless stated otherwise. Unadorned $\ot$ means $\ot_{\kk}$. Let $A$ be an algebra and $M$ an $A$-bimodule. The group of algebra automorphisms of $A$ is denoted by $\Aut(A)$. For any $\si\in \Aut(A)$, denote by ${}^\si M$ (resp.\ $M^\si$) the left $A$-module (resp.\ right $A$-module) whose ground $\kk$-module is the same with $M$ and whose left (resp.\ right) $A$-action is twisted by $\si$, that is, $a\triangleright  m= \si(a)m$ (resp.\ $m\triangleleft a=m\si(a)$) for any $a\in A$, $m\in M$. 

Let $A^\mathrm{op}$ be the opposite algebra of $A$, and $A^e = A\ot A^\mathrm{op}$ the enveloping algebra of $A$. An $A$-bimodule can be viewed as a left $A^e$-module in a natural way. Recall that $A$ is homologically smooth if $A$ as a left (or equivalently, right) $A^e$-module, admits a finitely generated projective resolution of finite length.

\subsection{Generalized Weyl algebras}

We recall the definition of generalized Weyl algebras given by Bavula in \cite{Bavula:GWA-def}, \cite{Bavula:GWA-tensor-product}.

\begin{definition}
	Suppose $B$ is an algebra. For a central element $\vphi\in B$ and an algebra automorphism $\si\in \Aut(B)$, the associated (degree one) generalized Weyl algebra (GWA for short) $W$ is by definition generated by two variables $x$ and $y$ over $B$ subject to
	\begin{gather*}
	xb = \si(b)x,\quad yb = \si^{-1}(b)y,\quad  \forall b\in B, \\
	yx = \vphi, \quad xy =\si(\vphi).
	\end{gather*}
	The algebra is written as $W = B(\si, \vphi)$. 
\end{definition}

We adopt the convention about the super/sub-scripts of cochain/chain complexes as follows:
\begin{gather*}
\cdots\To C^{i-1}\xrightarrow[\quad]{d^{i-1}} C^i\xrightarrow[\;\;\quad]{d^i} C^{i+1}\To\cdots\\
\cdots\To C_{i+1}\xrightarrow[\;\;\quad]{d_i} C_i\xrightarrow[\quad]{d_{i-1}} C_{i-1}\To\cdots
\end{gather*}
and call a chain complex $C_\sbu$ in an abelian category $\mc A$ \textit{alternate in $[n,+\infty)$} if $d_i=d_{i+2}$ as morphisms in $\mc A$ for all $i\geq n$. So necessarily, $C_i=C_{i+2}=C_{i+4}=\cdots$ for all $i\geq n$. A similar definition is given for cochain complexes.

\begin{proposition}\label{propo:reg-GWA}\cite{L:gwa-def}
	Let $W=B(\sigma,\varphi)$ be a GWA as above. Suppose further that $\vphi$ is a regular element. Then $W$ as a left $W^e$-module can be represented by an alternate complex $C_{\sbu}\in\mathrm{Ch}_{\geq 0}(W^e\textrm{-}\mathsf{Mod})$ in $[1,+\infty)$. Concretely, $C_0=W\ot_BW$, $C_1=(W^{\si}\ot_BW)\oplus(W\ot_B{}^{\si}W)$, $C_2=(W\ot_BW)\oplus(W^{\si}\ot_B{}^{\si}W)$, and
	\begin{alignat*}{2}
	d^C_0(1\ot 1, 0) &= 1\ot x-x\ot 1, &\quad d^C_0(0, 1 \ot 1) &= 1\ot y-y\ot 1,\\
	d^{C}_1(1\ot 1, 0)&=(y \ot 1, 1 \ot x), &\quad d^{C}_1(0, 1 \ot 1) &= (1\ot  y, x\ot 1),\\
	d^{C}_2(1\ot  1, 0) &= (-x\ot 1,1\ot  x), &\quad d^{C}_2(0, 1 \ot 1) &= (1 \ot y, -y\ot 1).
	\end{alignat*}
	The augmentation $C_0\to W$ is given by the multiplication map.
\end{proposition}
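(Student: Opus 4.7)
The plan is to realize $C_\sbu$ as the natural ``generators-and-relations'' bimodule complex attached to the presentation of $W$ over $B$. Since $W$ is generated over $B$ by the two symbols $x$ and $y$, these account for the two summands of $C_1$, each twisted by $\si$ so as to encode the commutation rules $xb=\si(b)x$ and $yb=\si^{-1}(b)y$. The relations $yx=\vphi$ and $xy=\si(\vphi)$, combined with the $\si$-twisted commutation rules, likewise produce the two summands of $C_2$. The higher syzygies then repeat periodically because each subsequent syzygy reduces to multiplication by the central regular element $\vphi$ (and its $\si$-translates) in the commutative base $B$, triggering a Koszul-type two-periodic pattern.

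First I would check that $C_\sbu$ is indeed a chain complex. By the alternate structure, it suffices to verify $d_0^Cd_1^C=0$, $d_1^Cd_2^C=0$ and $d_2^Cd_1^C=0$; each identity reduces to a short direct calculation using the four defining relations of the GWA together with the centrality of $\vphi$. For example, $d_0^Cd_1^C(1\ot 1,0)=d_0^C(y\ot 1,1\ot x)$ expands to $(y\ot x-\vphi\ot 1)+(\vphi\ot 1-y\ot x)=0$ in $C_0$. The other identities are equally routine.

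Next I would verify exactness. The augmentation $\mu\colon C_0\to W$ is surjective; using $B$-bilinearity one reduces $\ker\mu$ to the $W^e$-submodule generated by $1\ot x-x\ot 1$ and $1\ot y-y\ot 1$, which is precisely $\im d_0^C$. For exactness at higher stages I would exploit the $\inn$-grading of $W$ given by $W_n=Bx^n$ for $n\geq 0$ and $W_n=By^{-n}$ for $n<0$, together with the freeness of $W$ as a left and as a right $B$-module. These structures let one decompose each $C_i$ into graded pieces and reduce exactness to a family of smaller complexes over $B$ in which the differentials are governed by multiplication by $\vphi$, $\si(\vphi)$, $\si^{-1}(\vphi),\ldots$. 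The regularity of $\vphi$, propagated to all $\si$-translates since $\si\in\Aut(B)$, guarantees injectivity of these multiplication maps and hence the desired exactness.

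The main obstacle is the exactness in positive degrees, where the alternate structure makes the complex infinite. A clean way to handle it is to exhibit an explicit contracting homotopy on each graded component; alternatively one can set up a spectral sequence associated with the $\inn$-grading and check the required vanishing on the $E_1$-page from the regularity of $\vphi$.
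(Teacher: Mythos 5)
The paper itself does not prove this proposition: it is quoted verbatim from \cite{L:gwa-def}, so the only comparison available is with the argument there, which (like your plan) rests on the $\inn$-grading of $W$, its freeness over $B$, and the regularity of $\vphi$ and its $\si$-translates. Within your sketch the preliminary steps are sound: by two-periodicity the complex property does reduce to the three identities you list, your sample computation is correct (using $1\ot\vphi=\vphi\ot 1$ in $W\ot_BW$), and exactness at $C_0$ follows from the standard fact that $\ker\mu$ is generated as a $W$-bimodule by $1\ot x-x\ot1$ and $1\ot y-y\ot1$. You should also record the routine check that the six formulas give well-defined maps on the twisted tensor products; the twists $W^{\si}$ and ${}^{\si}W$ are precisely what make, say, $w\ot w'\mapsto w\ot xw'-wx\ot w'$ compatible with $\ot_B$.

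The genuine gap is in positive degrees, which is where all the content of the proposition lies. The inference ``regularity of $\vphi$ gives injectivity of multiplication by $\si^i(\vphi)$, hence exactness'' is a non sequitur: exactness of the two-periodic tail requires identifying kernels with images, not merely injectivity of certain maps. Concretely, in each total-degree component one gets a complex of free left $B$-modules of countably infinite rank whose matrices mix units $\pm1$ with entries $\si^i(\vphi)$; for instance, with $v_k=x^k$ for $k\geq0$ and $v_k=y^{-k}$ for $k<0$, one has $d^C_0(v_m\ot v_n,0)=v_m\ot v_{n+1}-\si^{m+1}(\vphi)\,v_{m+1}\ot v_n$ when $m<0\leq n$. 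So the asserted reduction to ``multiplication by $\vphi$-translates'' itself requires an explicit elimination/filtration argument or the contracting homotopy you only allude to, and producing that homotopy (or the graded kernel computation) is the actual work; deferring it to ``a spectral sequence whose $E_1$-vanishing follows from regularity'' leaves the key step unproved. Note also that exactness at $C_1$ and at $C_3=C_1$ are different statements ($\ker d^C_0=\im d^C_1$ versus $\ker d^C_2=\im d^C_1$), so besides degree $0$ there are three distinct exactness claims to establish, which your sketch does not distinguish. Finally, a small point: the proposition is stated for an arbitrary algebra $B$, so avoid phrasing the last step in terms of ``the commutative base $B$''; fortunately nothing in your strategy actually uses commutativity.
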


\begin{remark}
	Although the second summand of $C_2$ can be simplified to $W\ot_BW$ as the author wrote in \cite{L:gwa-def}, we insist on the expression with double $\si$ so as to make the computation easier in the following part.
\end{remark}

\subsection{Noncommutative differential forms and partial derivations}

Since in this paper we mainly focus on GWAs over the polynomial algebra in two variables, a noncommutative version of differential forms and partial derivations will be introduced first of all. From now on, let $B=\kk[z_1,z_2]$. For any polynomial $g=g(z_1,z_2)\in B$, the noncommutative differential $1$-form $\diffd g$ is defined as $g\ot 1-1\ot g$. The noncommutative partial derivations with respect to $z_1$, $z_2$ are defined as $\kk$-linear maps
\begin{align*}
\De_1\colon B&\To B\ot B & \De_2\colon B&\To B\ot B \\
z_1^{i_1}z_2^{i_2}&\longmapsto \sum_{j=1}^{i_1}z_1^{i_1-j}\ot z_1^{j-1}z_2^{i_2}, & z_1^{i_1}z_2^{i_2}&\longmapsto \sum_{j=1}^{i_2}z_1^{i_1}z_2^{i_2-j}\ot z_2^{j-1}.
\end{align*}

We have
\begin{equation}\label{eq:total-derivation}
\diffd g=\De_1(g)\diffd z_1+\De_2(g)\diffd z_2
\end{equation}
which is a noncommutative analogy of the total derivative formula in calculus. Let $\mu$ be the multiplication of $B$. Then it is easy to check that $\mu\De_1=\pl/\pl z_1$ and $\mu\De_2=\pl/\pl z_2$.

Suppose that $\si\colon B\to B$ is an endomorphism which is determined by $\si(z_1)=f_1(z_1,z_2)$, $\si(z_2)=f_2(z_1,z_2)$. We call the determinant
\[
J_{\nc}=\begin{vmatrix}
\De_1(f_1) & \De_1(f_2) \\[1ex]
\De_2(f_1) & \De_2(f_2) 
\end{vmatrix}
\]
the noncommutative Jacobian determinant of $\si$. If we take the image of each entry by $\mu$, it becomes the usual Jacobian determinant of $\si$,
\[
J=\begin{vmatrix}
\dfrac{\pl f_1}{\pl z_1} & \dfrac{\pl f_2}{\pl z_1} \\[2ex]
\dfrac{\pl f_1}{\pl z_2} & \dfrac{\pl f_2}{\pl z_2}
\end{vmatrix}.
\]

Let $u$, $v$ be any $\kk$-linear maps of $B$, we denote ${}^u\De_i^v=(u\ot v)\circ\De_i$ and ${}^u\diffd^v g=(u\ot v)(\diffd g)=u(g)\ot 1-1\ot v(g)$. By convention, $u$ or $v$ is usually omitted if it is the identity map.

\begin{lemma}\label{lem:NC-diff}
	Let $\si$ be an endomorphism of $B$, and $f_1$, $f_2$ as above. %Denote $J=|J(\si)|$. 
	We have
	\[
	\difb z_i=\De_1(f_i)\diffd z_1+\De_2(f_i)\diffd z_2
	\]
	for $i=1$, $2$.
%	\begin{enumerate}
%		\item $\difb z_i=\De_1(f_i)\diffd z_1+\De_2(f_i)\diffd z_2$ for $i=1$, $2$,
%		\item $\diffd^{\si^{-1}}z_1=J^{-1}\De_2^{\si^{-1}}(f_2)\difl z_1-J^{-1}\De_2^{\si^{-1}}(f_1)\difl z_2$, 
%		\item ${}^{\si^{-1}}\!\diffd z_1=J^{-1}{}^{\si^{-1}}\!\De_2(f_2)\difr z_1-J^{-1}{}^{\si^{-1}}\!\De_2(f_1)\difr z_2$, 
%		\item $\diffd^{\si^{-1}}z_2=-J^{-1}\De_1^{\si^{-1}}(f_2)\difl z_1+J^{-1}\De_1^{\si^{-1}}(f_1)\difl z_2$, 
%		\item ${}^{\si^{-1}}\!\diffd z_2=-J^{-1}{}^{\si^{-1}}\!\De_1(f_2)\difr z_1+J^{-1}{}^{\si^{-1}}\!\De_1(f_1)\difr z_2$.
%	\end{enumerate}
\end{lemma}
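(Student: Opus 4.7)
The strategy is to reduce $\difb z_i$ to an ordinary noncommutative differential of $f_i$, and then apply the total-derivative identity \eqref{eq:total-derivation}, which has already been recorded.

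First I would unpack the notation. By the general convention ${}^u\diffd^v g = (u\ot v)(\diffd g) = u(g)\ot 1 - 1\ot v(g)$, taking $u=v=\si$ and $g=z_i$ gives
\[
\difb z_i = \si(z_i)\ot 1 - 1\ot \si(z_i) = f_i\ot 1 - 1\ot f_i = \diffd f_i.
\]
Thus the lemma amounts to the assertion $\diffd f_i = \De_1(f_i)\diffd z_1 + \De_2(f_i)\diffd z_2$, which is precisely the instance of \eqref{eq:total-derivation} with $g$ replaced by $f_i\in B$. Substituting closes the proof.

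If one wishes to be fully self-contained and not invoke \eqref{eq:total-derivation} as a black box, the one remaining task is to verify that identity on a basis. By $\kk$-linearity it suffices to check it on an arbitrary monomial $g=z_1^{i_1}z_2^{i_2}$; using the explicit definitions of $\De_1$ and $\De_2$, the sums $\De_1(g)\diffd z_1$ and $\De_2(g)\diffd z_2$ each telescope, yielding $z_1^{i_1}\ot z_2^{i_2} - 1\ot z_1^{i_1}z_2^{i_2}$ and $z_1^{i_1}z_2^{i_2}\ot 1 - z_1^{i_1}\ot z_2^{i_2}$ respectively, whose sum is exactly $\diffd g$.

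There is no real obstacle here: the lemma is essentially a bookkeeping statement, asserting the compatibility of the twisted differential $\difb$ with the noncommutative partial derivations $\De_1,\De_2$. The only mild subtlety is to avoid confusing the two roles played by $\si$ in the superscript/subscript positions; once the definition of ${}^\si\!\diffd^\si$ is written out, the identification with $\diffd f_i$ is immediate and the result follows from the already-established total-derivative formula.
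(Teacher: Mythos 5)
Your proof is correct and matches the paper's argument, which simply cites the total-derivative formula \eqref{eq:total-derivation}: since $\difb z_i=\si(z_i)\ot 1-1\ot\si(z_i)=\diffd f_i$, the lemma is exactly \eqref{eq:total-derivation} applied to $g=f_i$. The optional telescoping verification of \eqref{eq:total-derivation} on monomials is also accurate, though not needed since the paper records that identity beforehand.
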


\begin{proof}
	Directly from \eqref{eq:total-derivation}.
%	The first equality is easy to prove. For the rest, we assume $\si^{-1}(z_i)=p_i(z_1,z_2)$. Since $z_i=\si\si^{-1}(z_i)=p_i(\si(z_1), \si(z_2))$, we have
%	\[
%	\diffd z_1=\diffd (p_i(\si(z_1), \si(z_2)))=\Delb_1(p_i)\difb z_1+\Delb_2(p_i)\difb z_2.
%	\]
%	On the other hand, by \eqref{eq:chain-rule},
%	\[
%	{}^{\si}\!J_{\nc}^{\si}(\si^{-1})=J_{\nc}(\si)^{-1}=J^{-1}\begin{pmatrix}
%	\De_2(f_2) & -\De_1(f_2) \\ -\De_2(f_1) & \De_1(f_1)
%	\end{pmatrix}.
%	\]
%	It follows that
%	\begin{align*}
%	\diffd z_1&=J^{-1}\De_2(f_2)\difb z_1-J^{-1}\De_2(f_1)\difb z_2,\\
%	\diffd z_2&=-J^{-1}\De_1(f_2)\difb z_1+J^{-1}\De_1(f_1)\difb z_2,
%	\end{align*}
%	and hence the rest four equalities are proven.
\end{proof}

\begin{remark}
The elements $\diffd z_i$, $\De_i(g)$, etc.\ can be regarded as in $W^e$ via the embedding $B\ot B\hookrightarrow W\ot W^\mathrm{op}$. The reader will not confuse them.
\end{remark}

%The following is a direct result:
%\[
%\Delb_i(g)=\De_i(g(f_1(z_1,z_2),f_2(z_1,z_2)))=\Delb_1(g)\De_i(f_1)+\Delb_2(g)\De_i(f_2), \quad i=1,2.
%\]
\subsection{Homotopy double complexes}

In \cite{L:gwa-def} the author introduced the notion of homotopy double complexes in order to present GWAs $W_{(1)}$. Let us recall the definition.

\begin{definition}
	Suppose that $\mc A$ is an abelian category. Let $\{C^{pq}\}_{p,q\in\inn}$ be a family of objects in $\mc A$ together with morphisms $d_v$, $d_h$, $t$ of degrees $(0, 1)$, $(1, 0)$, $(2,-1)$ respectively. The 4-tuple $(C^{\sbu\sbu},d_v, d_h, t)$ is called a homotopy double cochain complex if the following conditions are fulfilled:
	\begin{align}
	d^2_v &= 0, \label{eq:htpy-def-1}\\
	d_hd_v + d_vd_h &= 0, \label{eq:htpy-def-2}\\
	d^2_h + d_vt + td_v &= 0, \label{eq:htpy-def-3}\\
	d_ht + td_h &= 0, \label{eq:htpy-def-4}\\
	t^2 &= 0. \label{eq:htpy-def-5}
	\end{align}
	A homotopy double chain complex is defined in a similar way.
\end{definition}

The associated total complex $(\Tot C^{\sbu\sbu}, d)$ is defined by
\[
(\Tot C^{\sbu\sbu})^n =\bigoplus_{p+q=n}C^{pq}\text{ and }d = d_v + d_h + t,
\]
which is a generalization of the usual total complex of a usual double complex.

\section{Homological smoothness of a GWA over $\kk[z_1,z_2]$}\label{sec:homo-smooth}

Throughout this section, $B=\kk[z_1,z_2]$, $W_{(2)}=B(\si,\vphi)$ is a GWA over $B$. We will construction a homotopy double complex for $W_{(2)}$ whose total complex is a free $W_{(2)}^e$-resolution of $W_{(2)}$. Using this resolution, a necessary and sufficient condition under which $W_{(2)}$ is homologically smooth is discussed. Recall that $\vphi=\vphi(z_1,z_2)$. Write $\vphi_i=\pl\vphi/\pl z_i$, $i=1$, $2$, and let $(\vphi,\vphi_1,\vphi_2)$ be the ideal in $B$ generated by $\vphi$, $\vphi_1$, $\vphi_2$. In this section we will prove

\begin{theorem}\label{thm:main-theorem}
For any $\si\in\Aut(B)$, $W_{(2)}$ is homologically smooth if and only if $(\vphi,\vphi_1,\vphi_2)=B$.
\end{theorem}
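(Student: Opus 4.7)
The plan is to follow the template of the author's earlier treatment of $W_{(1)}$ in \cite{L:gwa-def}, now combined with a Koszul-type resolution of $B = \kk[z_1, z_2]$ in the two commutative variables. Concretely, I would first assemble a homotopy double complex $C^{\sbu\sbu}$ of free $W_{(2)}^e$-modules whose vertical direction is patterned on the alternate complex of Proposition \ref{propo:reg-GWA} (encoding the GWA generators $x$, $y$) and whose horizontal direction is the length-two Koszul complex of $B$ (encoding the differentials $\diffd z_1$, $\diffd z_2$). Lemma \ref{lem:NC-diff} shows that the $\si$-twisted horizontal differentials $\difb z_i$ differ from $\diffd z_i$ by a linear combination involving $\De_j(f_i)$, so the vertical and horizontal differentials fail to anti-commute on the nose; the homotopy $t$ of degree $(2,-1)$ must be chosen from $\De_1$, $\De_2$ applied to $f_1$, $f_2$, $\vphi$ so as to absorb this discrepancy and satisfy the identities \eqref{eq:htpy-def-1}--\eqref{eq:htpy-def-5}. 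Passing to the total complex then yields Proposition \ref{prop:free-resol-GWA}, a bounded free $W_{(2)}^e$-resolution of $W_{(2)}$.

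For sufficiency, I would apply $\Hom_{W_{(2)}^e}(-, M)$ to this resolution and compute the top Hochschild cohomology $H^4(W_{(2)}, M)$. The relevant cochain differential involves $\vphi$, $\vphi_1$, $\vphi_2$: they enter through the GWA relations $yx = \vphi$, $xy = \si(\vphi)$ together with the Leibniz-type identity \eqref{eq:total-derivation}, which after applying the multiplication $\mu$ expresses $\diffd\vphi$ in terms of $\vphi_1$, $\vphi_2$. Careful bookkeeping should show that $H^4(W_{(2)}, M)$ is a quotient of $M$ on which the triple $(\vphi, \vphi_1, \vphi_2)$ acts by zero. Consequently, $H^4(W_{(2)}, M) = 0$ for every bimodule $M$ precisely when $(\vphi, \vphi_1, \vphi_2) = B$; in that case the resolution admits a further truncation and $W_{(2)}$ is homologically smooth.

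For necessity, I plan to argue contrapositively in the spirit of Bavula's global-dimension analysis \cite{Bavula:gldim-polyn}. Suppose $(\vphi, \vphi_1, \vphi_2) \subsetneq B$. Extending $\kk$ to its algebraic closure if necessary (permissible since homological smoothness descends along field extensions), choose a common zero $(a_1, a_2)$ of $\vphi$, $\vphi_1$, $\vphi_2$ and consider the simple $W_{(2)}$-module supported on the $\si$-orbit of the maximal ideal $\mf m = (z_1 - a_1, z_2 - a_2)$. Because $\vphi$ has a ``multiple'' zero at $(a_1, a_2)$ in the sense that all first partials vanish there, the standard inductive construction of a projective $W_{(2)}$-resolution of this simple module becomes $2$-periodic past the second step and therefore does not terminate; hence $\gldim W_{(2)} = \infty$. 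Since homological smoothness would imply $\gldim W_{(2)} \le \pdim_{W_{(2)}^e} W_{(2)} < \infty$ by the standard change-of-rings argument, $W_{(2)}$ cannot be homologically smooth.

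The main obstacle will be the detailed verification of the homotopy-double-complex axioms \eqref{eq:htpy-def-1}--\eqref{eq:htpy-def-5}: the noncommutative Leibniz correction $\difb z_i - \diffd z_i$ couples the $(x, y)$-direction to the $(z_1, z_2)$-direction in a way that forces $t$ to be finely tuned using $\De_i(f_j)$ and $\De_i(\vphi)$. A secondary subtlety is identifying the image of the top differential with $(\vphi, \vphi_1, \vphi_2) W_{(2)}^e$ rather than a larger or smaller submodule, for which the chain-level identity \eqref{eq:total-derivation} applied to both $\vphi$ and $\si(\vphi)$ will be crucial.
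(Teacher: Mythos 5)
Your outline follows the paper's architecture (a homotopy double complex mixing the alternate GWA complex of Proposition \ref{propo:reg-GWA} with the Koszul complex of $B$, then top cohomology plus a Bézout identity for sufficiency, and a global-dimension obstruction for necessity), but it contains one structural error and two steps that are asserted rather than proved. First, Proposition \ref{prop:free-resol-GWA} does \emph{not} produce a bounded resolution: $\Tot\mc P_{\sbu\sbu}$ is an infinite complex that is alternate (2-periodic) in degrees $\geq 3$. This is not a cosmetic point --- if the resolution were bounded, $W_{(2)}$ would be homologically smooth for every $\vphi$, contradicting the necessity half you also argue; the periodicity is exactly why one must prove $H^4(W_{(2)},M)=0$ for all $M$ before any truncation is possible. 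Second, your claim that ``careful bookkeeping'' shows $H^4(W_{(2)},M)$ is a quotient of a single copy of $M$ annihilated by $(\vphi,\vphi_1,\vphi_2)$ is unsubstantiated: the degree-4 cochain group is $M^8$ with eight cocycle equations \eqref{eq:cocyc-1}--\eqref{eq:cocyc-8}, and the paper's Proposition \ref{prop:smooth-condition} instead constructs, from $\al\vphi+\be_1\vphi_1+\be_2\vphi_2=1$ and the identities of Lemma \ref{lem:Del-partial}, an explicit 3-cochain bounding an arbitrary 4-cocycle; that computation (four lemmas) \emph{is} the sufficiency proof, and the stronger ``precisely when'' you assert is neither needed nor established.

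The more serious gap is in necessity. Your key step --- that the projective resolution of the simple module attached to $\mf m=(z_1-a_1,z_2-a_2)$ ``becomes 2-periodic past the second step and therefore does not terminate, hence $\gldim W_{(2)}=\infty$'' --- is an assertion with no mechanism: non-termination of one chosen resolution does not bound the projective dimension from below unless you prove minimality or exhibit nonvanishing $\Ext$ or $\Tor$ in arbitrarily high degrees, and you never explain how the vanishing of $\vphi,\vphi_1,\vphi_2$ at $(a_1,a_2)$ forces such periodicity. The paper's Lemma \ref{lem:infty-gld-2} supplies the missing content: using that $\Tot\mc P_{\sbu\sbu}$ is alternate in $[3,+\infty)$ (so $\Tor_4=\Tor_6=\cdots$), it takes the cyclic one-sided modules $M=W_{(2)}/I_r$ and $N=W_{(2)}/I_l$, observes $(\e_{\mf m}\ot\e_{\mf m})(\De_i(\vphi))=0$ precisely because $\vphi_i\in\mf m$, and shows by a $\inn$-grading argument that the resulting $4$-cycle is not a boundary, giving $\Tor^{W_{(2)}}_n\neq 0$ for all even $n\geq 4$. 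Note also that the case $\vphi=0$ must be split off (Lemma \ref{lem:infty-gld-1}), since the resolution itself requires $\vphi\neq 0$, and that the passage to $\overline{\kk}$ needs the statement in the correct direction: smoothness and $\Tor$ are preserved under base field extension, so non-smoothness over $\mbb K$ pulls back to $\kk$, which the paper implements via $\mbb K\ot\Tor_*^{W_{(2)}}(M,N)\cong\Tor_*^{W_{(2)}^{\mbb K}}(M^{\mbb K},N^{\mbb K})$. As it stands, your sufficiency sketch defers the hard computation and your necessity argument would not go through without an entirely new proof of the periodicity claim.
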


\begin{remark}\label{rmk:two-smoothness}
	Notice that $\overline{W}_{(2)}:=B(\id,\vphi)$ is commutative. When $\mathsf{char}\,\kk=0$, $\overline{W}_{(2)}$ is smooth (in the commutative sense) if and only if $(\vphi,\vphi_1,\vphi_2)=B$. To put it another way: $W_{(2)}$ is homologically smooth if and only if $\overline{W}_{(2)}$ is smooth.
	\begin{enumerate}
		\item A similar phenomenon exists for $W_{(1)}$: $W_{(1)}$ is homologically smooth if and only if $\overline{W}_{(1)}:=\kk[z](\si,\vphi)$ is smooth. This has been explained in deformation theory \cite{L:gwa-def}. Since an automorphism $\si\colon\kk[z]\to\kk[z]$ is necessarily given by $\si(z)=\lam z+\eta$ for some $\lam\in\kk^\times$ and $\eta\in\kk$, one can regard $W_{(1)}$ as a deformation of $\overline{W}_{(1)}$, following Van den Bergh \cite{Van-den-Bergh:Koszul-bimodule-complex}.
		\item Back to $B=\kk[z_1,z_2]$, one does not know the expressions of $\si(z_1)$, $\si(z_2)$ for an arbitrary $\si\in\Aut(B)$, although the van der Kulk theorem reveals the structure of the group $\Aut(B)$, i.e., it decomposes into a coproduct of two subgroups \cite{Dicks:aut-polyn-two}, \cite{McKay-Wang:aut-polyn-two}, \cite{vdKulk:aut-polyn-two}. Thus one cannot say that $W_{(2)}$ is a deformation of $\overline{W}_{(2)}$, unless $\si$ is affine. However, the phenomenon can be summarized in this sentence: $\si$ preserves the (non)smoothness of $W_{(n)}$ for $n=1$, $2$.
	\end{enumerate}
\end{remark}

\begin{remark}
	Differential smoothness is another noncommutative generalization of smoothness. Brzezi\'nski discussed noncommutative calculi for a class of differentially smooth GWA $W_{(1)}$ and $W_{(2)}$ whose defining automorphisms $\si$ are affine \cite{Brzezinski:diff-smooth}. Two kinds of smoothness are compared, and examples of algebras that are differentially but not necessarily homologically smooth are given. A relationship between the two forms of smoothness has not yet been understood.
\end{remark}

\subsection{Construction of homotopy double complex}\label{subsec:constr-homo-double}

Since $B$ admits the following Koszul complex
\[
0\To B\ot B\xrightarrow[\quad]{(\diffd z_2\;\;-\diffd z_1)}( B\ot B)^2\xrightarrow[\quad]{\left(\begin{smallmatrix}
	\diffd z_1\\ \diffd z_2
	\end{smallmatrix}\right)} B\ot B
\]
as a $B$-bimodule resolution via $B\ot B\xrightarrow[\quad]{\mu}B$, we obtain left $W_{(2)}^e$-free resolutions of $W_{(2)}\ot_BW_{(2)}$, $W_{(2)}^{\si}\ot_BW_{(2)}$, $W_{(2)}\ot_B{}^{\si}W_{(2)}$, and $W_{(2)}^{\si}\ot_B{}^{\si}W_{(2)}$ as follows:
\begin{align*}
0\To W_{(2)}^e\xrightarrow[\quad]{(\diffd z_2\;\;-\diffd z_1)}(W_{(2)}^e)^2\xrightarrow[\quad]{\left(\begin{smallmatrix}
	\diffd z_1\\ \diffd z_2
	\end{smallmatrix}\right)} W_{(2)}^e &\To W_{(2)}\ot_BW_{(2)}\To 0, \\
0\To W_{(2)}^e\xrightarrow[\quad]{(\difl z_2\;\;-\difl z_1)}(W_{(2)}^e)^2\xrightarrow[\quad]{\left(\begin{smallmatrix}
	\difl z_1\\ \difl z_2
	\end{smallmatrix}\right)} W_{(2)}^e &\To W_{(2)}^{\si}\ot_BW_{(2)}\To 0, \\
0\To W_{(2)}^e\xrightarrow[\quad]{(\difr z_2\;\;-\difr z_1)}(W_{(2)}^e)^2\xrightarrow[\quad]{\left(\begin{smallmatrix}
	\difr z_1\\ \difr z_2
	\end{smallmatrix}\right)} W_{(2)}^e & \To W_{(2)}\ot_B{}^{\si}W_{(2)}\To 0, \\
0\To W_{(2)}^e\xrightarrow[\quad]{(\difb z_2\;\;-\difb z_1)}(W_{(2)}^e)^2\xrightarrow[\quad]{\left(\begin{smallmatrix}
	\difb z_1\\ \difb z_2
	\end{smallmatrix}\right)} W_{(2)}^e & \To W_{(2)}^{\si}\ot_B{}^{\si}W_{(2)}\To 0.
\end{align*}

We will construct a double complex $(\mc{P}_{\sbu\sbu}, d^h,d^v,t)$ in the next step. To be more intuitive, we draw a diagram to illustrate our construction
\[
\xymatrix{
	& \mc P_{02} \ar[d] & \mc P_{12} \ar[d]\ar[l] & \mc P_{22} \ar[d]\ar[l] & \mc P_{32} \ar[d]\ar[l] & \cdots \ar[l] \\
	& \mc P_{01} \ar[d] & \mc P_{11} \ar[d]\ar[l] & \mc P_{21} \ar[d]\ar[l]\ar[llu]|\hole & \mc P_{31} \ar[d]\ar[l]\ar[llu]|\hole & \cdots \ar[l] \\
	& \mc P_{00} \ar[d]\ar@{-->}[ld]_-{\mu} & \mc P_{10} \ar[d]\ar[l] & \mc P_{20} \ar[d]\ar[l]\ar[llu]|\hole & \mc P_{30} \ar[d]\ar[l]\ar[llu]|\hole & \cdots \ar[l] \\
	W_{(2)} & C_0 \ar[l] & C_1 \ar[l] & C_2 \ar[l] & C_3 \ar[l] & \cdots \ar[l]
}
\]
where $C_\sbu$ is the complex given in Proposition \ref{propo:reg-GWA},   $\mc{P}_{i\sbu}$ is a projective resolution of $C_i$ for each $i$, and the dashed arrow $\mc P_{00}\to W_{(2)}$ is the composition $\mc P_{00}\to C_0\to W_{(2)}$, equal to the multiplication map $\mu$.

Based on the alternate complex $C_\sbu$ in Proposition \ref{propo:reg-GWA}, we erect the four resolutions, and then obtain the embryo of a homotopy double complex:
\begin{align*}
\mc P_{00}&=\mc P_{02}=W_{(2)}^e,\\
\mc P_{01}&=\mc P_{10}=\mc P_{12}=\mc P_{20}=\mc P_{22}=\cdots=(W_{(2)}^e)^2,\\
\mc P_{11}&=\mc P_{21}=\mc P_{31}=\cdots=(W_{(2)}^e)^4,
\end{align*}
and all other entries are zero. Moreover, the morphisms $d^v$ are expressed by
\begin{alignat*}{2}
d^v_{00}&=\begin{pmatrix}
\diffd z_1\\
\diffd z_2
\end{pmatrix},
&
d^v_{01}&=\begin{pmatrix}
-\diffd z_2 & \diffd z_1
\end{pmatrix},\\
d^v_{10}&=\begin{pmatrix}
\difl z_1 & 0 \\
\difl z_2 & 0 \\
0 & \difr z_1 \\
0 & \difr z_2 
\end{pmatrix},
&\quad
d^v_{11}&=\begin{pmatrix}
-\difl z_2 & \difl z_1 & 0 & 0 \\
0 & 0 & -\difr z_2 & \difr z_1 
\end{pmatrix},\\
d^v_{20}&=\begin{pmatrix}
\diffd z_1 & 0 \\
\diffd z_2 & 0 \\
0 & \difb z_1 \\
0 & \difb z_2 
\end{pmatrix},
&
d^v_{21}&=\begin{pmatrix}
-\diffd z_2 & \diffd z_1 & 0 & 0 \\
0 & 0 & -\difb z_2 & \difb z_1 
\end{pmatrix},
\end{alignat*}
and the rest are hence known according to the alternating feature. Next we add appropriate morphisms $d^h$, $t$ making $\mc P_{\sbu\sbu}$ into a homotopy double complex. The morphisms are given as follows:
\begin{align*}
d^h_{00}&=\begin{pmatrix}
1\ot x-x\ot 1 \\
1\ot y-y\ot 1
\end{pmatrix},
\\
d^h_{01}&=\begin{pmatrix}
x\ot 1-(1\ot x)\De_1(f_1) & -(1\ot x)\De_2(f_1) \\
-(1\ot x)\De_1(f_2) & x\ot 1-(1\ot x)\De_2(f_2) \\
-1\ot y+(y\ot 1)\De_1(f_1) & (y\ot 1)\De_2(f_1) \\
(y\ot 1)\De_1(f_2) & -1\ot y+(y\ot 1)\De_2(f_2)
\end{pmatrix},
\\
d^h_{02}&=\begin{pmatrix}
-x\ot 1+(1\ot x)J_\nc \\
1\ot y-(y\ot 1)J_\nc
\end{pmatrix},
\\
d^h_{10}&=\begin{pmatrix}
y\ot 1 & 1\ot x \\
1\ot y & x\ot 1
\end{pmatrix},
\\
d^h_{11}&=\begin{pmatrix}
-y\ot 1 & 0 & -1\ot x & 0 \\
0 & -y\ot 1 & 0 & -1\ot x \\
-1\ot y & 0 & -x\ot 1 & 0 \\
0 & -1\ot y & 0 & -x\ot 1
\end{pmatrix},
\\
d^h_{12}&=\begin{pmatrix}
y\ot 1 & 1\ot x \\
1\ot y & x\ot 1
\end{pmatrix},
\\
d^h_{20}&=\begin{pmatrix}
-x\ot 1 & 1\ot x \\
1\ot y & -y\ot 1
\end{pmatrix},
\\
d^h_{21}&=\begin{pmatrix}
x\ot 1 & 0 & -1\ot x & 0 \\
0 &  x\ot 1 & 0 & -1\ot x \\
-1\ot y & 0 & y\ot 1 & 0 \\
0 & -1\ot y & 0 & y\ot 1
\end{pmatrix},
\\
d^h_{22}&=\begin{pmatrix}
-x\ot 1 & 1\ot x \\
1\ot y & -y\ot 1
\end{pmatrix},
\\
t_{01}&=\begin{pmatrix}
\De_1(\vphi) & \De_2(\vphi) \\
\Delb_1(\vphi)\De_1(f_1)+\Delb_2(\vphi)\De_1(f_2) & \Delb_1(\vphi)\De_2(f_1)+\Delb_2(\vphi)\De_2(f_2)
\end{pmatrix},
\\
t_{02}&=\begin{pmatrix}
-\De_2(\vphi) \\
\De_1(\vphi) \\
-J_\nc\Delb_2(\vphi) \\
J_\nc\Delb_1(\vphi)
\end{pmatrix},
\\
t_{11}&=\begin{pmatrix}
\Dell_1(\vphi) & \Dell_2(\vphi) & 0 & 0 \\
0 & 0 & \Delr_1(\vphi) & \Delr_2(\vphi)
\end{pmatrix},
\\
t_{12}&=\begin{pmatrix}
-\Dell_2(\vphi) & 0 \\
\Dell_1(\vphi) & 0 \\
0 & -\Delr_2(\vphi) \\
0 & \Delr_1(\vphi)
\end{pmatrix},
\\
t_{21}&=\begin{pmatrix}
\De_1(\vphi) & \De_2(\vphi) & 0 & 0 \\
0 & 0 & \Delb_1(\vphi) & \Delb_2(\vphi)
\end{pmatrix},
\\
t_{22}&=\begin{pmatrix}
-\De_2(\vphi) & 0 \\
\De_1(\vphi) & 0 \\
0 & -\Delb_2(\vphi) \\
0 & \Delb_1(\vphi)
\end{pmatrix}.
\end{align*}

\begin{proposition}
	The $4$-tuple $(\mc P_{\sbu\sbu},d^v,d^h,t)$ is a homotopy double cochain complex.
\end{proposition}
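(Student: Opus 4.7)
The plan is to verify the five identities of a homotopy double cochain complex by direct block matrix computation in $W_{(2)}^e$, relying on Lemma \ref{lem:NC-diff}, the total-derivation formula \eqref{eq:total-derivation}, and the defining relations $xb=\si(b)x$, $yb=\si^{-1}(b)y$, $yx=\vphi$, $xy=\si(\vphi)$. All matrices $d^v$, $d^h$, $t$ are of size at most $4\times 4$, so the verification amounts to systematic bookkeeping of how $x$ and $y$ move past entries coming from $B\ot B$.

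First I would dispose of the two ``commutative'' identities. For \eqref{eq:htpy-def-1}, $d_v^2=0$: each column $\mc P_{i\sbu}$ is a truncation of the Koszul resolution associated to one of the four $B$-twisted tensor products $W_{(2)}\ot_BW_{(2)}$, $W_{(2)}^\si\ot_BW_{(2)}$, $W_{(2)}\ot_B{}^\si W_{(2)}$, $W_{(2)}^\si\ot_B{}^\si W_{(2)}$. In every such column the two Koszul generators ($\diffd z_1,\diffd z_2$, or $\difl z_1,\difl z_2$, or the right/both-sided analogues) lie in the image of the commutative subring $B\ot B\hookrightarrow W_{(2)}^e$, so they commute, and the usual Koszul sign cancellation $ab-ba=0$ yields $d_v^2=0$ column by column. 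Identity \eqref{eq:htpy-def-5}, $t^2=0$, follows from the same observation applied to the Koszul-type blocks whose entries $\De_i(\vphi)$, $\Dell_i(\vphi)$, $\Delr_i(\vphi)$, $\Delb_i(\vphi)$ again live in this commutative subring.

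Next, the chain-map identities \eqref{eq:htpy-def-2} and \eqref{eq:htpy-def-4}. For \eqref{eq:htpy-def-2}, each $d^h_{ij}$ is the unique $W_{(2)}^e$-linear lift of $d^C_i$ (from Proposition \ref{propo:reg-GWA}) through the column Koszul resolutions; anticommutation with $d^v$ is controlled by how $1\ot x$, $x\ot 1$, $1\ot y$, $y\ot 1$ move past the entries of $d^v$, which come from $B\ot B$. The relation $xb=\si(b)x$ forces elements of the form $\diffd z_i$ to transform into $\difl z_i$, $\difr z_i$, or $\difb z_i$ when crossed by $x\ot 1$ or $1\ot x$, and this rewriting is exactly the content of Lemma \ref{lem:NC-diff}; it is also why the coefficients $\De_j(f_i)$ and $J_\nc$ appear in $d^h_{01}$ and $d^h_{02}$. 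With the intertwining in place, $d^hd^v+d^vd^h=0$ at each position reduces to a matrix product that cancels term by term. Identity \eqref{eq:htpy-def-4} is handled similarly: $t$ is essentially multiplication by $\vphi$ expanded along the appropriate Koszul basis, and since $\vphi$ is central in $B$ the required anticommutation with $d^h$ holds up to the $\si$-twists already recorded in the superscripts of $\Dell_i$, $\Delr_i$, $\Delb_i$.

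The main obstacle is identity \eqref{eq:htpy-def-3}, $d_h^2+d_vt+td_v=0$, because $C_\sbu$ is only \emph{alternate} and not a genuine complex, so $d_h^2\neq 0$. I would compute $d_h^2$ at a representative position such as $\mc P_{00}$: using $yx=\vphi$ and $xy=\si(\vphi)$, the residue turns out to be exactly $-\diffd\vphi$ in the first component and $-\difb\vphi$ in the second, and analogous computations at $\mc P_{01}$, $\mc P_{02}$, etc., produce residues among $\{\pm\diffd\vphi,\pm\difl\vphi,\pm\difr\vphi,\pm\difb\vphi\}$. On the other hand, by \eqref{eq:total-derivation} applied to $\vphi$ and its $\si$-twists, each such residue decomposes as a Koszul sum $\sum_i\De_i(\vphi)\diffd z_i$ (and the twisted variants); this is precisely the role of the entries of $t$. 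For instance, the products $\Delb_i(\vphi)\De_j(f_i)$ appearing in $t_{01}$ are the Koszul coefficients of $\difb\vphi=\sum_i\Delb_i(\vphi)\difb z_i$ when $\difb z_i$ is itself rewritten in terms of $\diffd z_1,\diffd z_2$ via Lemma \ref{lem:NC-diff}. Thus $d_vt+td_v$ exactly cancels $d_h^2$, and the alternation of $C_\sbu$ propagates the same verification to all remaining $\mc P_{ij}$ with no new ideas required.
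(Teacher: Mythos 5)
Your plan is essentially the paper's own proof: one verifies the five identities by anti-multiplication of the explicit matrices over $W_{(2)}^e$, with \eqref{eq:htpy-def-1} immediate, \eqref{eq:htpy-def-4}--\eqref{eq:htpy-def-5} reducing to commutativity of the $B\ot B$-entries and the $\si$-twisting rules, and the genuinely nontrivial point being \eqref{eq:htpy-def-3}, where the residue of $d_h^2$ (for instance $(-\diffd\vphi,-\difb\vphi)^T$ at $\mc P_{00}$) is cancelled by $d_vt+td_v$ via \eqref{eq:total-derivation} and Lemma \ref{lem:NC-diff} — exactly the two sample identities the paper works out. The only small imprecision is that at positions such as $\mc P_{01}$ the residues of $d_h^2$ appear multiplied by the factors $\De_j(f_i)$, and their cancellation also invokes the determinant identity $J_\nc=\De_1(f_1)\De_2(f_2)-\De_1(f_2)\De_2(f_1)$, but this is bookkeeping of the same kind and does not change the argument.
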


\begin{proof}
	Notice that \eqref{eq:htpy-def-1} is clearly satisfied, and so \eqref{eq:htpy-def-2}-\eqref{eq:htpy-def-5} are to be verified. All the verifications are translated into anti-multiplications of matrices over $W_{(2)}^e$.  We confine ourselves to the proof of
	\begin{gather}
	d^h_{00}d^h_{10}+d^v_{00}t_{01}=0, \label{eq:htpy-proof-1}\\
	d^h_{01}d^h_{11}+d^v_{01}t_{02}+t_{01}d^v_{20}=0, \label{eq:htpy-proof-2}
	\end{gather}
	leaving others to the reader.
	
	We have
	\[
	d^h_{00}d^h_{10}=\begin{pmatrix}
	y\ot 1 & 1\ot x \\
	1\ot y & x\ot 1
	\end{pmatrix}\begin{pmatrix}
	1\ot x-x\ot 1 \\
	1\ot y-y\ot 1
	\end{pmatrix}
	=\begin{pmatrix}
	1\ot \vphi -\vphi\ot 1\\
	1\ot \si(\vphi)-\si(\vphi)\ot 1
	\end{pmatrix},
	\]
	and $d^v_{00}t_{01}$ equals
	\begin{align*}
	&\varphantom{=}\begin{pmatrix}
	\De_1(\vphi) & \De_2(\vphi) \\
	\Delb_1(\vphi)\De_1(f_1)+\Delb_2(\vphi)\De_1(f_2) & \Delb_1(\vphi)\De_2(f_1)+\Delb_2(\vphi)\De_2(f_2)
	\end{pmatrix}\begin{pmatrix}
	\diffd z_1\\
	\diffd z_2
	\end{pmatrix} \\
	&=\begin{pmatrix}
	\De_1(\vphi)\diffd z_1+\De_2(\vphi)\diffd z_2 \\
	(\Delb_1(\vphi)\De_1(f_1)+\Delb_2(\vphi)\De_1(f_2))\diffd z_1+(\Delb_1(\vphi)\De_2(f_1)+\Delb_2(\vphi)\De_2(f_2))\diffd z_2
	\end{pmatrix} \\
	&=\begin{pmatrix}
	\vphi\ot 1-1\ot \vphi \\
	\Delb_1(\vphi)\difb z_1+\Delb_2(\vphi)\difb z_2
	\end{pmatrix} \qquad(\text{by Lemma \ref{lem:NC-diff}}) \\
	&=\begin{pmatrix}
	\vphi\ot 1-1\ot \vphi \\
	\si(\vphi)\ot 1-1\ot\si(\vphi)
	\end{pmatrix}.
	\end{align*}
	Hence \eqref{eq:htpy-proof-1} is proven.
	
	For \eqref{eq:htpy-proof-2}, we have
	\begin{align*}
	d^h_{01}d^h_{11}&=\begin{pmatrix}
	-y\ot 1 & 0 & -1\ot x & 0 \\
	0 & -y\ot 1 & 0 & -1\ot x \\
	-1\ot y & 0 & -x\ot 1 & 0 \\
	0 & -1\ot y & 0 & -x\ot 1
	\end{pmatrix}\\
	&\varphantom{=}{}\cdot\begin{pmatrix}
	x\ot 1-(1\ot x)\De_1(f_1) & -(1\ot x)\De_2(f_1) \\
	-(1\ot x)\De_1(f_2) & x\ot 1-(1\ot x)\De_2(f_2) \\
	-1\ot y+(y\ot 1)\De_1(f_1) & (y\ot 1)\De_2(f_1) \\
	(y\ot 1)\De_1(f_2) & -1\ot y+(y\ot 1)\De_2(f_2)
	\end{pmatrix} \\
	&=\begin{pmatrix}
	-yx\ot 1+1\ot yx & 0 \\ 0 & -yx\ot 1+1\ot yx \\ (1\ot xy-xy\ot 1)\De_1(f_1) & (1\ot xy-xy\ot 1)\De_2(f_1) \\ (1\ot xy-xy\ot 1)\De_1(f_2) & (1\ot xy-xy\ot 1)\De_2(f_2)
	\end{pmatrix} \\
	&=\begin{pmatrix}
	1\ot \vphi-\vphi\ot 1 & 0 \\ 0 & 1\ot \vphi-\vphi\ot 1 \\ (1\ot \si(\vphi)-\si(\vphi)\ot 1)\De_1(f_1) & (1\ot \si(\vphi)-\si(\vphi)\ot 1)\De_2(f_1) \\ (1\ot \si(\vphi)-\si(\vphi)\ot 1)\De_1(f_2) & (1\ot \si(\vphi)-\si(\vphi)\ot 1)\De_2(f_2)
	\end{pmatrix}, \\
	d^v_{01}t_{02}&=\begin{pmatrix}
	-\De_2(\vphi) \\
	\De_1(\vphi) \\
	-J\Delb_2(\vphi) \\
	J\Delb_1(\vphi)
	\end{pmatrix}\begin{pmatrix}
	-\diffd z_2 & \diffd z_1
	\end{pmatrix}=\begin{pmatrix}
		\De_2(\vphi)\diffd z_2 & -\De_2(\vphi)\diffd z_1 \\
		-\De_1(\vphi)\diffd z_2 & \De_1(\vphi)\diffd z_1 \\
		J_\nc\Delb_2(\vphi)\diffd z_2 & -J_\nc\Delb_2(\vphi)\diffd z_1 \\
		-J_\nc\Delb_1(\vphi)\diffd z_2 & J_\nc\Delb_1(\vphi)\diffd z_1
		\end{pmatrix},
	\end{align*}
	and $t_{01}d^v_{20}$ is equal to
	\[
		\begin{pmatrix}
				\De_1(\vphi)\diffd z_1 & \De_2(\vphi)\diffd z_1 \\
				\De_1(\vphi)\diffd z_2 & \De_2(\vphi)\diffd z_2 \\
				(\Delb_1(\vphi)\De_1(f_1)+\Delb_2(\vphi)\De_1(f_2))\difb z_1 & (\Delb_1(\vphi)\De_2(f_1)+\Delb_2(\vphi)\De_2(f_2))\difb z_1 \\
				(\Delb_1(\vphi)\De_1(f_1)+\Delb_2(\vphi)\De_1(f_2))\difb z_2 & (\Delb_1(\vphi)\De_2(f_1)+\Delb_2(\vphi)\De_2(f_2))\difb z_2 
				\end{pmatrix}\!.
	\]
	It follows that the $(1,1)$-, $(1,2)$-, $(2,1)$-, $(2,2)$-entries of $d^h_{01}d^h_{11}+d^v_{01}t_{02}+t_{01}d^v_{20}$ are all zero. The $(3,1)$-entry is
	\begin{align*}
	&\varphantom{=}-\difb\vphi\,\De_1(f_1)+J_\nc\Delb_2(\vphi)\diffd z_2+(\Delb_1(\vphi)\De_1(f_1)+\Delb_2(\vphi)\De_1(f_2))\difb z_1 \\
	&=-\difb\vphi\,\De_1(f_1)+J_{\nc}\Delb_2(\vphi)\diffd z_2+\Delb_1(\vphi)\De_1(f_1)\difb z_1+\Delb_2(\vphi)\De_1(f_2)\difb z_1\\
	&=(\Delb_1(\vphi)\difb z_1-\difb\vphi)\De_1(f_1)+J_{\nc}\diffd z_2+\De_1(f_2)\difb z_1)\Delb_2(\vphi)\\
	&=(-\Delb_2(\vphi)\difb z_2)\De_1(f_1)+J_{\nc}\diffd z_2+\De_1(f_2)\difb z_1)\Delb_2(\vphi)\\
	&=(J_{\nc}\diffd z_2-\De_1(f_1)\difb z_2+\De_1(f_2)\difb z_1)\Delb_2(\vphi)\\
	&=(J_{\nc}\diffd z_2-\De_1(f_1)(\De_1(f_2)\diffd z_1+\De_2(f_2)\diffd z_2)+\De_1(f_2)(\De_1(f_1)\diffd z_1\\
	&\varphantom{=}{}+\De_2(f_1)\diffd z_2))\Delb_2(\vphi)\\
	&=(J_{\nc}-\De_1(f_1)\De_2(f_2)+\De_1(f_2)\De_2(f_1))\diffd z_2\Delb_2(\vphi)\\
	&=0.
	\end{align*}
	Similarly, the $(3,2)$-, $(4,1)$-, $(4,2)$-entries are zero. Thus \eqref{eq:htpy-proof-2} is also proven.
\end{proof}

\begin{remark}
	When constructing homotopy double complex for $W_{(1)}$ in \cite{L:gwa-def}, the verification of \eqref{eq:htpy-def-4}, \eqref{eq:htpy-def-5} is trivial. But for $W_{(2)}$, this is not so easy.
\end{remark}

\begin{proposition}\label{prop:free-resol-GWA}
If $\vphi\neq0$, $\Tot \mc P_{\sbu\sbu}$ is a resolution of $W_{(2)}$ by finitely generated free $W_{(2)}^e$-modules via $\mu$. Moreover, the complex $\Tot \mc P_{\sbu\sbu}$ is alternate in $[3, +\infty)$.
\end{proposition}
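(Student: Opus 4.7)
The plan is to prove the two parts of the proposition---that the total complex is a free $W_{(2)}^e$-resolution of $W_{(2)}$, and that it is alternate in $[3,+\infty)$---separately. Freeness and finite generation over $W_{(2)}^e$ are immediate from the construction: every $\mc P_{pq}$ is a finite direct sum of copies of $W_{(2)}^e$. What remains is exactness of the augmented complex $\Tot\mc P_{\sbu\sbu}\to W_{(2)}\to 0$ and the alternation.

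For exactness I would run a spectral sequence on the filtration by columns $F_p(\Tot\mc P)_n=\bigoplus_{p'\leq p,\,p'+q=n}\mc P_{p'q}$. Because $d^v$ preserves $p$ while $d^h$ and $t$ strictly decrease $p$, this is an increasing filtration by subcomplexes, bounded in each total degree and hence convergent. The $E^0$-page is the column $(\mc P_{p,\sbu},d^v)$, which by construction (cf.\ the four Koszul sequences at the start of \S\ref{subsec:constr-homo-double}) is the free $W_{(2)}^e$-resolution of $C_p$ induced from the Koszul resolution of $B$. Hence $E^1_{p,0}=C_p$ and $E^1_{p,q}=0$ for $q>0$. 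The $E^1$-differential is induced by $d^h+t$, but on the row $q=0$ the map $t$ lands in a vanishing row, so only $d^h$ contributes; a comparison of $d^h_{p,0}$ with the differential $d^C$ of Proposition \ref{propo:reg-GWA} identifies $d^1$ with $d^C$. Since $\vphi\neq 0$ and $B=\kk[z_1,z_2]$ is a domain, $\vphi$ is regular, so Proposition \ref{propo:reg-GWA} gives $H_0(C_\sbu)=W_{(2)}$ and $H_p(C_\sbu)=0$ for $p>0$. Thus $E^2$ is concentrated at the origin with value $W_{(2)}$, the sequence collapses, and the edge morphism is precisely $\mu$.

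For the alternation in $[3,+\infty)$, I would invoke two compatible $2$-periodicities in $p$ valid for $p\geq 1$: the columns $\mc P_{p,\sbu}$ are Koszul-type resolutions of $C_p$ and therefore depend only on the parity of $p$, because $C_\sbu$ is alternate in $[1,+\infty)$; and the explicit block formulas for $d^v_{pq}$, $d^h_{pq}$, $t_{pq}$ in \S\ref{subsec:constr-homo-double} exhibit the same period. For every $n\geq 3$, all indices $(p,q)$ with $p+q=n$ and $0\leq q\leq 2$ satisfy $p\geq n-2\geq 1$, so $(\Tot\mc P)_n$ lies in the periodic region and both the module and the outgoing differential agree with those in degree $n+2$.

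The step I expect to cost the most effort is the identification of the $E^1$-differential induced by $d^h$ with the $C_\sbu$-differential $d^C$: one must record that $d^h_{p,0}$ covers $d^C_{p-1}$ through the Koszul augmentations $\mc P_{p,0}\to C_p$. This is visible from the explicit formulas for $d^h_{00}$, $d^h_{10}$, $d^h_{20}$ together with the alternating structure of $C_\sbu$, but should be verified. A subsidiary check---that the columns really are resolutions---reduces to the regularity of each of $(\diffd z_1,\diffd z_2)$, $(\difl z_1,\difl z_2)$, $(\difr z_1,\difr z_2)$, $(\difb z_1,\difb z_2)$ on $W_{(2)}^e$, which follows from the freeness of $W_{(2)}$ over $B$ on both sides.
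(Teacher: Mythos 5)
Your argument is correct and is essentially the paper's own proof: the paper disposes of this proposition by the same column-filtration spectral sequence (with $E^1$ equal to the complex $C_\sbu$ of Proposition \ref{propo:reg-GWA} and $E^2$ collapsing to $W_{(2)}$), merely deferring the details to \cite{L:gwa-def}, while the alternation statement follows, as you note, from the $2$-periodicity of the columns and of the explicit matrices for $p\geq 1$.
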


\begin{proof}
This follows from spectral sequence argument. See \cite{L:gwa-def} for the details.
\end{proof}

\subsection{Proof of sufficiency}
Suppose $(\vphi,\vphi_1,\vphi_2)=B$. Let us prove that $W_{(2)}$ is homologically smooth in this case. 

First of all, notice that $\vphi\neq 0$ is automatically satisfied. Hence by Proposition \ref{prop:free-resol-GWA}, $\Tot\mc P_{\sbu\sbu}$ is a free resolution of $W_{(2)}$, and so we can compute Hochschild cohomology $H^*(W_{(2)},M)=\Ext^*_{W_{(2)}^e}(W_{(2)},M)$ for any $W_{(2)}^e$-module $M$ by $\Tot\mc P_{\sbu\sbu}$. Next let $\mc Q_M^{\sbu\sbu}=\Hom_{W_{(2)}^e}(\mc P_{\sbu\sbu}, M)$, $d_v^{pq}=\Hom_{W_{(2)}^e}(d^v_{pq}, M)$, $d_h^{pq}=\Hom_{W_{(2)}^e}(d^h_{pq}, M)$, and $t^{pq}=\Hom_{W_{(2)}^e}(t_{pq}, M)$. Clearly, $(\mc Q_M^{\sbu\sbu},d_v,d_h,t)$ is a homotopy double cochain complex, and
\[
H^*(\Tot\mc Q_M^{\sbu\sbu})\cong H^*(\Hom_{W^e}(\Tot\mc P_{\sbu\sbu}, M))\cong H^*(W_{(2)},M).
\]
We write $\mc{Q}_M^{\sbu}$ schematically, as follows.
\[
\xymatrix@R=12mm@C=12mm{
	M \ar[r]^-{d_h^{02}}\ar[rrd]^(0.6){t^{02}}|!{[r];[rd]}\hole  & M^2 \ar[r]^-{d_h^{12}}\ar[rrd]^(0.6){t^{12}}|!{[r];[rd]}\hole & M^2 \ar[r]^-{d_h^{22}}\ar[rrd]^(0.6){t^{22}}|!{[r];[rd]}\hole & M^2 \ar[r]^-{d_h^{32}} & M^2 \ar[r] & \cdots \\
	M^2 \ar[r]^-{d_h^{01}}\ar[rrd]^(0.6){t^{01}}|!{[r];[rd]}\hole\ar[u]^(0.4){d_v^{01}}  & M^4 \ar[r]^-{d_h^{11}}\ar[rrd]^(0.6){t^{11}}|!{[r];[rd]}\hole\ar[u]^(0.4){d_v^{11}} & M^4 \ar[r]^-{d_h^{21}}\ar[rrd]^(0.6){t^{21}}|!{[r];[rd]}\hole\ar[u]^(0.4){d_v^{21}} & M^4 \ar[r]^-{d_h^{31}}\ar[u]^(0.4){d_v^{31}} & M^4 \ar[r]\ar[u]^(0.4){d_v^{41}} & \cdots \\
	M \ar[r]_-{d_h^{00}}\ar[u]^(0.4){d_v^{00}}  & M^2 \ar[r]_-{d_h^{10}}\ar[u]^(0.4){d_v^{10}} & M^2 \ar[r]_-{d_h^{20}}\ar[u]^(0.4){d_v^{20}} & M^2 \ar[r]_-{d_h^{30}}\ar[u]^(0.4){d_v^{30}} & M^2 \ar[r]\ar[u]^(0.4){d_v^{40}} & \cdots
}
\]

\begin{proposition}\label{prop:smooth-condition}
One has $H^4(\Tot\mc Q_M^{\sbu\sbu})=0$ for all $M$ if $(\vphi,\vphi_1,\vphi_2)=B$. Consequently, $W_{(2)}$ is homologically smooth if $(\vphi,\vphi_1,\vphi_2)=B$.
\end{proposition}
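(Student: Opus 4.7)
The plan is to take an arbitrary 4-cocycle $\omega\in(\Tot\mc Q_M^{\sbu\sbu})^4$ and construct a 3-cochain $\eta$ with $d\eta=\omega$, using a Bezout identity $1=a\vphi+b\vphi_1+c\vphi_2$ in $B$ supplied by the hypothesis. The consequence that $W_{(2)}$ is homologically smooth then follows by a standard dimension-shifting argument.

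First I would write out degrees 3, 4 and 5 completely explicitly. From the diagram for $\mc Q_M^{\sbu\sbu}$, the module $(\Tot\mc Q_M^{\sbu\sbu})^4=M^2\oplus M^4\oplus M^2$ sits at bidegrees $(4,0),(3,1),(2,2)$, and the neighbouring total-degree pieces both have rank $8$ as well. Because the double complex is alternate in $[3,+\infty)$ (Proposition \ref{prop:free-resol-GWA}), the matrices of $d_v$, $d_h$ and $t$ leaving degree 4 are the $\Hom$-duals of the blocks tabulated in \S\ref{subsec:constr-homo-double} and coincide with the corresponding matrices at every other sufficiently high degree, so the condition $d\omega=0$ becomes a single fixed coupled system of three equations in the components $\omega=(\alpha,\beta,\gamma)$.

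The key idea is then to use $1=a\vphi+b\vphi_1+c\vphi_2$ as a geometric splitting of $B$. The noncommutative Leibniz relation $\diffd\vphi=\De_1(\vphi)\diffd z_1+\De_2(\vphi)\diffd z_2$ (together with its $\si$-twisted variants involving $\Dell_i,\Delr_i,\Delb_i$) gives a dictionary between the three generators and the three kinds of differentials: $\De_i(\vphi)$ enters the homotopy $t$, the element $\vphi$ enters $d_h$ via $yx=\vphi$ and $xy=\si(\vphi)$, while $\diffd z_i$ enters $d_v$. Each component of $\omega$ can therefore be split into three pieces, each lying in the image of the relevant piece of the differential, yielding a candidate preimage $\eta$ assembled from $a,b,c$ and the given $\alpha,\beta,\gamma$. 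That $d\eta=\omega$ should then follow by plugging in and using the cocycle equations to absorb cross terms, paralleling the $W_{(1)}$ argument of \cite{L:gwa-def}.

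The main obstacle I anticipate is that $t$ is only a homotopy, not a differential, so the naive componentwise construction will pick up errors coming from the relations $d_hd_v+d_vd_h=0$ and $d_h^2+d_vt+td_v=0$ between $\alpha,\beta,\gamma$. These error terms should cancel thanks to the twisted Jacobian identity $J_{\nc}=\De_1(f_1)\De_2(f_2)-\De_1(f_2)\De_2(f_1)$ that was already the crux of \eqref{eq:htpy-proof-2}, but verifying the cancellation cleanly is where the bookkeeping will be heaviest. Once $H^4(W_{(2)},M)=0$ is established for every $W_{(2)}^e$-module $M$, one has $\Ext_{W_{(2)}^e}^4(W_{(2)},-)=0$, hence $\pdim_{W_{(2)}^e}W_{(2)}\leq 3$; the third syzygy in the finitely generated free resolution $\Tot\mc P_{\sbu\sbu}$ is then projective, producing a finite finitely generated projective resolution and thus homological smoothness of $W_{(2)}$.
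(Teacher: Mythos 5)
Your overall strategy is the same as the paper's: use the Bezout identity $\al\vphi+\be_1\vphi_1+\be_2\vphi_2=1$ to contract an arbitrary $4$-cocycle of $\Tot\mc Q_M^{\sbu\sbu}$ to an explicit $3$-cochain, and then deduce smoothness from $\Ext^4_{W_{(2)}^e}(W_{(2)},-)=0$ exactly as in your last paragraph (projective dimension $\leq 3$, third syzygy of the finitely generated free resolution $\Tot\mc P_{\sbu\sbu}$ is finitely generated projective). That closing dimension-shifting argument is fine. However, the heart of the proof is missing: you never produce the contracting $3$-cochain, you only promise that ``each component of $\omega$ can be split into three pieces, each lying in the image of the relevant piece of the differential.'' This is precisely the step that requires work, and your proposed mechanism for it is not the one that actually functions. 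In the paper the construction rests on the elementary but crucial identity (Lemma \ref{lem:Del-partial})
\[
\De_i(\vphi)-\De_i^{\pl_i}(\vphi)\,\diffd z_i=1\ot\vphi_i,
\]
together with its $\si$-twisted variants for $\Dell_i$, $\Delr_i$, $\Delb_i$. It is this identity, combined with the eight cocycle equations \eqref{eq:cocyc-1}--\eqref{eq:cocyc-8} and the GWA relations $yx=\vphi$, $xy=\si(\vphi)$ (which produce the terms $1\ot\al\vphi$ and $1\ot\si(\al\vphi)$), that makes each component of $d\mbf n$ collapse to $\bigl(1\ot(\al\vphi+\be_1\vphi_1+\be_2\vphi_2)\bigr)m^{**}_{*}=m^{**}_{*}$. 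The explicit cochain $\mbf n$ therefore involves not just $1\ot\be_i$ and $1\ot\al y$ multiplied against the cocycle components, but also correction terms such as $(1\ot\be_2)\De_2^{\pl_2}(\vphi)m^{22}_1$ and $(1\ot\si(\be_1))\Dell_1^{\si\pl_1}(\vphi)m^{22}_2$, which your sketch does not anticipate.

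A second, more minor point: you predict that the residual cross terms cancel ``thanks to the twisted Jacobian identity $J_{\nc}=\De_1(f_1)\De_2(f_2)-\De_1(f_2)\De_2(f_1)$.'' That identity is indeed the crux of verifying that $(\mc P_{\sbu\sbu},d^v,d^h,t)$ is a homotopy double complex (equation \eqref{eq:htpy-proof-2}), but it plays no role in the contraction of $4$-cocycles: once the homotopy double complex structure is granted, the verification $d^3_{\mc Q}(\mbf n)=\mbf m$ uses only the cocycle relations and Lemma \ref{lem:Del-partial}, with no further appearance of $J_{\nc}$. So as written your proposal is a plausible plan whose decisive computation --- the definition of the contracting cochain and the (admittedly lengthy) check that it works --- is left as an unverified hope; to complete it you would need to supply the analogue of Lemma \ref{lem:Del-partial} and the explicit formulas for $\mbf n$, at which point you would essentially have reproduced the paper's proof.
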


\begin{proof}
There exist polynomials $\al$, $\be_1$, $\be_2$ such that $\al\vphi+\be_1\vphi_1+\be_2\vphi_2=1$. We write $d_{\mc Q}^\sbu$ for the differentials of $\Tot\mc Q_M^{\sbu\sbu}$.

Let $\mbf{m}=(m^{22}_1, m^{22}_2, m^{31}_1, m^{31}_2, m^{31}_3, m^{31}_4, m^{40}_1, m^{40}_2)$ be a $4$-cocycle of $\Tot\mc Q_M^{\sbu\sbu}$. We have
\[
\left\{
\begin{aligned}
d_h^{22}(m^{22}_1,m^{22}_2)+d_v^{31}(m^{31}_1, m^{31}_2, m^{31}_3, m^{31}_4)&=0 \\
t^{22}(m^{22}_1,m^{22}_2)+d_h^{31}(m^{31}_1, m^{31}_2, m^{31}_3, m^{31}_4)+d_v^{40}(m^{40}_1,m^{40}_2)&=0 \\
t^{31}(m^{31}_1, m^{31}_2, m^{31}_3, m^{31}_4)+d_h^{40}(m^{40}_1,m^{40}_2)&=0
\end{aligned}
\right.
\]
which is equivalent to the following eight equations
\begin{align}
-(x\ot 1)m^{22}_1+(1\ot x)m^{22}_2-\difl z_2 m^{31}_1+\difl z_1 m^{31}_2&=0, \label{eq:cocyc-1}\\
(1\ot y)m^{22}_1-(y\ot 1)m^{22}_2-\difr z_2m^{31}_3+\difr z_1m^{31}_4&=0, \label{eq:cocyc-2}\\
-\De_2(\vphi)m^{22}_1-(y\ot 1)m^{31}_1-(1\ot x)m^{31}_3+\diffd z_1m^{40}_1&=0, \label{eq:cocyc-3}\\
\De_1(\vphi)m^{22}_1-(y\ot 1)m^{31}_2-(1\ot x)m^{31}_4+\diffd z_2m^{40}_1&=0, \label{eq:cocyc-4}\\
-\Delb_2(\vphi)m^{22}_2-(1\ot y)m^{31}_1-(x\ot 1)m^{31}_3+\difb z_1m^{40}_2&=0, \label{eq:cocyc-5}\\
\Delb_1(\vphi)m^{22}_2-(1\ot y)m^{31}_2-(x\ot 1)m^{31}_4+\difb z_2m^{40}_2&=0, \label{eq:cocyc-6}\\
\Dell_1(\vphi)m^{31}_1+\Dell_2(\vphi)m^{31}_2-(x\ot 1)m^{40}_1+(1\ot x)m^{40}_2&=0, \label{eq:cocyc-7}\\
\Delr_1(\vphi)m^{31}_3+\Delr_2(\vphi)m^{31}_4+(1\ot y)m^{40}_1-(y\ot 1)m^{40}_2&=0.\label{eq:cocyc-8}
\end{align}

Define
\begin{align*}
n^{12}_1&=(1\ot \be_1)m^{31}_2-(1\ot \be_2)m^{31}_1, \\
n^{12}_2&=(1\ot \si(\be_1))m^{31}_4-(1\ot \si(\be_2))m^{31}_3
+(1\ot \al y)m^{22}_1, \\
n^{21}_1&=(1\ot \be_1)m^{40}_1+(1\ot \be_2)\De_2^{\pl_2}(\vphi)m^{22}_1, \\
n^{21}_2&=(1\ot \be_2)m^{40}_1-(1\ot \be_1)\De_1^{\pl_1}(\vphi)m^{22}_1, \\
n^{21}_3&=(1\ot \si(\be_1))m^{40}_2-(1\ot \al y)m^{31}_1+(1\ot \si(\be_2))\Dell_2^{\si\pl_2}(\vphi)m^{22}_2, \\
n^{21}_4&=(1\ot \si(\be_2))m^{40}_2-(1\ot \al y)m^{31}_2-(1\ot \si(\be_1))\Dell_1^{\si\pl_1}(\vphi)m^{22}_2, \\
n^{30}_1&=-(1\ot \be_1)\Dell_1^{\pl_1}(\vphi)m^{31}_1-(1\ot\be_2)\Dell_2^{\pl_2}(\vphi)m^{31}_2, \\
n^{30}_2&=(1\ot \al y)m^{40}_1-(1\ot \si(\be_1))\De_1^{\si\pl_1}(\vphi)m^{31}_3-(1\ot \si(\be_2))\De_2^{\si\pl_2}(\vphi)m^{31}_4.
\end{align*}
These $n^{**}_{*}$ constitute a $3$-cochain $\mbf{n}$. Let us prove $d_{\mc Q}^3(\mbf{n})=\mbf{m}$. The following three equations are to be checked:
\begin{equation}
\label{eq:m-n-1}
\begin{pmatrix}
m^{22}_1 \\ m^{22}_2
\end{pmatrix}=
\begin{pmatrix}
y\ot 1 & 1\ot x \\ 1\ot y & x\ot 1
\end{pmatrix}
\begin{pmatrix}
n^{12}_1 \\ n^{12}_2
\end{pmatrix}+
\begin{pmatrix}
-\diffd z_2 & \diffd z_1 & 0 & 0 \\ 0 & 0 & -\difb z_2 & \difb z_1
\end{pmatrix}
\begin{pmatrix}
n^{21}_1 \\ n^{21}_2 \\ n^{21}_3 \\ n^{21}_4
\end{pmatrix},
\end{equation}
\begin{equation}\label{eq:m-n-2}
\begin{split}
\begin{pmatrix}
m^{31}_1 \\ m^{31}_2 \\ m^{31}_3 \\ m^{31}_4
\end{pmatrix}&=
\begin{pmatrix}
-\Dell_2(\vphi) & 0 \\ \Dell_1(\vphi) & 0 \\ 0 & \!-\Delr_2(\vphi) \\ 0 & \Delr_1(\vphi)
\end{pmatrix}
\begin{pmatrix}
n^{12}_1 \\ n^{12}_2
\end{pmatrix}+
\begin{pmatrix}
x\ot 1 & 0 & -1\ot x & 0 \\ 0 & x\ot 1 & 0 & -1\ot x \\ -1\ot y & 0 & y\ot 1 & 0 \\ 0 & -1\ot y & 0 & y\ot 1
\end{pmatrix}\\
&\varphantom{=}{}\cdot
\begin{pmatrix}
n^{21}_1 \\ n^{21}_2 \\ n^{21}_3 \\ n^{21}_4
\end{pmatrix}+\begin{pmatrix}
\difl z_1 & 0 \\ \difl z_2 & 0 \\ 0 & \difr z_1 \\ 0 & \difr z_2
\end{pmatrix}
\begin{pmatrix}
n^{30}_1 \\ n^{30}_2
\end{pmatrix},
\end{split}
\end{equation}
\begin{equation}\label{eq:m-n-3}
\begin{pmatrix}
m^{40}_1 \\ m^{40}_2
\end{pmatrix}=
\begin{pmatrix}
\Dell_1(\vphi) & \Dell_2(\vphi) & 0 & 0 \\ 0 & 0 & \!\Delb_1(\vphi) & \!\Delb_2(\vphi)
\end{pmatrix}
\begin{pmatrix}
n^{21}_1 \\ n^{21}_2 \\ n^{21}_3 \\ n^{21}_4
\end{pmatrix}+
\begin{pmatrix}
y\ot 1 & 1\ot x \\ 1\ot y & x\ot 1
\end{pmatrix}
\begin{pmatrix}
n^{30}_1 \\ n^{30}_2
\end{pmatrix}.
\end{equation}

There are eight equalities in total to be verified and the verification is tediously long. So we divide the whole proof into four lemmas. The sufficiency follows from them.
\end{proof}

\begin{lemma}\label{lem:Del-partial}
	For $i=1$, $2$, we have
	\begin{enumerate}
		\item $\De_i(\vphi)-\De_i^{\pl_i}(\vphi)\diffd z_i=1\ot\vphi_i$,
		\item $\Dell_i(\vphi)-\Dell_i^{\pl_i}(\vphi)\difl z_i=1\ot\vphi_i$,
		\item $\Delr_i(\vphi)-\De_i^{\si\pl_i}(\vphi)\difr z_i=1\ot\si(\vphi_i)$,
		\item $\Delb_i(\vphi)-\Dell_i^{\si\pl_i}(\vphi)\difb z_i=1\ot\si(\vphi_i)$.
	\end{enumerate}
\end{lemma}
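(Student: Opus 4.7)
The plan is to prove (1) by a direct computation on monomials of $B$, and then to deduce (2), (3), (4) from (1) by applying the three $\kk$-algebra endomorphisms $\si\ot\id$, $\id\ot\si$, $\si\ot\si$ of $B\ot B$ respectively.

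For (1), by $\kk$-linearity I may restrict to a single monomial $\vphi=z_1^az_2^b$. The definitions give
\[
\De_1(\vphi)=\sum_{j=1}^{a}z_1^{a-j}\ot z_1^{j-1}z_2^b,\qquad \De_1^{\pl_1}(\vphi)=\sum_{j=2}^{a}(j-1)\,z_1^{a-j}\ot z_1^{j-2}z_2^b.
\]
Multiplying the latter on the right by $\diffd z_1=z_1\ot 1-1\ot z_1$, then reindexing the two resulting sums with $k=j-1$ and $k=j$ respectively, one sees that the coefficient of $z_1^{a-k}\ot z_1^{k-1}z_2^b$ in the product is $k-(k-1)=1$ for $2\le k\le a-1$, equal to $1$ at $k=1$, and equal to $-(a-1)$ at $k=a$. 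Subtracting from $\De_1(\vphi)$ telescopes away all terms except $1\ot z_1^{a-1}z_2^b+(a-1)\cdot 1\ot z_1^{a-1}z_2^b=a\cdot 1\ot z_1^{a-1}z_2^b=1\ot\vphi_1$. The case $i=2$ is symmetric.

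For (2)--(4), I would apply to identity (1) the $\kk$-algebra endomorphisms $\si\ot\id$, $\id\ot\si$, $\si\ot\si$ of $B\ot B$ respectively. Because these are ring homomorphisms, they commute with the multiplication by $\diffd z_i$ in $B\ot B$, so one only needs to track how each factor transforms. Directly from the definitions, $(\si\ot\id)\De_i=\Dell_i$, $(\id\ot\si)\De_i=\Delr_i$, and $(\si\ot\si)\De_i=\Delb_i$; similarly, post-composition with $\id\ot\pl_i$ on the second slot commutes with whatever acts on the first slot and composes with $\si$ on the second slot when $\si$ acts there, giving $(\si\ot\id)\De_i^{\pl_i}=\Dell_i^{\pl_i}$, $(\id\ot\si)\De_i^{\pl_i}=\De_i^{\si\pl_i}$, and $(\si\ot\si)\De_i^{\pl_i}=\Dell_i^{\si\pl_i}$; and $\diffd z_i$ is sent to $\difl z_i$, $\difr z_i$, $\difb z_i$ respectively. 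Finally, the right-hand side $1\ot\vphi_i$ becomes $1\ot\vphi_i$ under $\si\ot\id$ and $1\ot\si(\vphi_i)$ under the other two, which is exactly what (2)--(4) assert.

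The only genuine computation is the monomial check for (1), and even there the main step is just recognising the telescoping; the derivations of (2)--(4) from (1) are purely functorial bookkeeping and present no obstacle.
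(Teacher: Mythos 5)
Your proof is correct and follows essentially the same route as the paper: the paper proves (1) by the identical telescoping computation (written for a general polynomial $\vphi=\sum\lam_{ij}z_1^iz_2^j$ rather than monomial by monomial) and simply asserts that (2)--(4) follow from (1), which is exactly the twisting argument via $\si\ot\id$, $\id\ot\si$, $\si\ot\si$ that you spell out.
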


\begin{proof}
	Clearly, (2), (3), (4) follow from (1). So let us prove (1) for $i=1$. The case $i=2$ is left to the reader.
	
	Suppose $\vphi=\sum_{i,j}\lam_{ij}z_1^iz_2^j$. Then
	\[
	\De_1(\vphi)=\sum_{i,j}\sum_{k=1}^{i}\lam_{ij}z_1^{i-k}\ot z_1^{k-1}z_2^j
	\]
	and so
	\begin{align*}
	\De_1^{\pl_1}(\vphi)\diffd z_1&=\sum_{i,j}\sum_{k=1}^{i}\lam_{ij}z_1^{i-k}\ot \pl_1(z_1^{k-1}z_2^j)(z_1\ot 1-1\ot z_1) \\
	&=\sum_{i,j}\sum_{k=1}^{i}\lam_{ij}z_1^{i-k}\ot (k-1)z_1^{k-2}z_2^j(z_1\ot 1-1\ot z_1) \\
	&=\sum_{i,j}\sum_{k=1}^{i}\lam_{ij}(k-1)z_1^{i-k+1}\ot z_1^{k-2}z_2^j-\sum_{i,j}\sum_{k=1}^{i}\lam_{ij}(k-1)z_1^{i-k}\ot z_1^{k-1}z_2^j \\
	&=\sum_{i,j}\sum_{k=0}^{i-1}\lam_{ij}kz_1^{i-k}\ot z_1^{k-1}z_2^j-\sum_{i,j}\sum_{k=1}^{i}\lam_{ij}(k-1)z_1^{i-k}\ot z_1^{k-1}z_2^j \\
	&=\sum_{i,j}\sum_{k=1}^{i}\lam_{ij}z_1^{i-k}\ot z_1^{k-1}z_2^j-\sum_{i,j}\lam_{ij}i\ot z_1^{i-1}z_2^j \\
	&=\De_1(\vphi)-1\ot\vphi_1. \qedhere
	\end{align*}
\end{proof}

\begin{lemma}
	Eq.\ \eqref{eq:m-n-1} holds true.
\end{lemma}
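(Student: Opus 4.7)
The plan is to verify the two scalar rows of \eqref{eq:m-n-1} by directly expanding the RHS using the definitions of $n^{12}_*$ and $n^{21}_*$, and then to eliminate the unwanted $m^{31}_*$ and $m^{40}_1$ contributions by means of the cocycle equations \eqref{eq:cocyc-3}--\eqref{eq:cocyc-6}. The surviving coefficient of each $m^{22}_j$ will be collapsed, via Lemma~\ref{lem:Del-partial} and the partition of unity $\al\vphi + \be_1\vphi_1 + \be_2\vphi_2 = 1$, to the identity $1\ot 1$.

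For the first row, after substitution I group the RHS by argument. The GWA relation $xb = \si(b)x$ gives the $W_{(2)}^e$-identity $(1\ot x)(1\ot \si(b)) = (1\ot b)(1\ot x)$, which lets me factor $(1\ot \be_2)$ out of $-(y\ot \be_2)m^{31}_1 - (1\ot x\si(\be_2))m^{31}_3$ and $(1\ot \be_1)$ out of $(y\ot \be_1)m^{31}_2 + (1\ot x\si(\be_1))m^{31}_4$. The cocycle relations \eqref{eq:cocyc-3} and \eqref{eq:cocyc-4} then rewrite these two brackets respectively as $-\De_2(\vphi)m^{22}_1 + \diffd z_1 m^{40}_1$ and $\De_1(\vphi)m^{22}_1 + \diffd z_2 m^{40}_1$; the $m^{40}_1$-contributions so produced exactly cancel the two explicit $\diffd z_i\,(1\ot\be_j)m^{40}_1$ terms already present on the RHS. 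For the $m^{22}_1$ coefficient I use $(1\ot x)(1\ot \al y) = 1\ot \al y x = 1\ot \al\vphi$ together with Lemma~\ref{lem:Del-partial}(1), rewritten as $(1\ot \be_i)\De_i(\vphi) - (1\ot \be_i)\De_i^{\pl_i}(\vphi)\diffd z_i = 1\ot \be_i\vphi_i$; summing over $i$ yields the scalar $1\ot(\al\vphi + \be_1\vphi_1 + \be_2\vphi_2) = 1\ot 1$, so the first row equals $m^{22}_1$.

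The second row is handled by the same scheme in the $\si$-twisted setting: I use $yb = \si^{-1}(b)y$ to factor out $(1\ot \si(\be_i))$, cocycle equations \eqref{eq:cocyc-5} and \eqref{eq:cocyc-6} to reduce the $m^{31}_*$ contributions, and Lemma~\ref{lem:Del-partial}(4) in the form $\Delb_i(\vphi) - \Dell_i^{\si\pl_i}(\vphi)\difb z_i = 1\ot \si(\vphi_i)$ in combination with $(x\ot 1)(1\ot \al y) = 1\ot \al yx = 1\ot \al\vphi$. Applying $\si$ to the partition of unity again closes the identity, giving $1\ot\si(\al\vphi + \be_1\vphi_1 + \be_2\vphi_2) = 1\ot 1$, so the second row equals $m^{22}_2$.

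No deep ideas beyond the cocycle relations, the partition of unity, and Lemma~\ref{lem:Del-partial} are needed; the main obstacle is purely bookkeeping---tracking signs, the reversed multiplication in the $W_{(2)}^{\mathrm{op}}$ tensor factor, and the $\si$-twists that appear when $x$ or $y$ is commuted past elements of $B$. A disciplined column-by-column tabulation of the coefficients of $m^{22}_*$, $m^{31}_*$, and $m^{40}_*$ on the RHS is the cleanest way to carry out the verification.
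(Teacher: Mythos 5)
Your overall scheme is the paper's own: substitute the definitions of $n^{12}_*$, $n^{21}_*$, group the result by $1\ot\be_i$ (first row) resp.\ $1\ot\si(\be_i)$ (second row), absorb the $m^{31}_*$ and $m^{40}_*$ contributions via the cocycle equations, and collapse the coefficient of $m^{22}_j$ through Lemma \ref{lem:Del-partial} and the partition of unity $\al\vphi+\be_1\vphi_1+\be_2\vphi_2=1$. Your first row is essentially the paper's computation, up to a sign slip in the displayed intermediate bracket: by \eqref{eq:cocyc-3} the $\be_2$-bracket $-(y\ot1)m^{31}_1-(1\ot x)m^{31}_3$ equals $\De_2(\vphi)m^{22}_1-\diffd z_1m^{40}_1$, not its negative (with your sign the two $m^{40}_1$-terms would add instead of cancel); your stated conclusion, and the final reduction to $1\ot(\al\vphi+\be_1\vphi_1+\be_2\vphi_2)$, is nevertheless the correct one.

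The second row, as sketched, has a genuine gap. After the $(1\ot\si(\be_1))$- and $(1\ot\si(\be_2))$-groupings are handled by \eqref{eq:cocyc-6} and \eqref{eq:cocyc-5}, there remain three terms carrying the factor $1\ot\al y$: the term $(1\ot\al y)(x\ot1)m^{22}_1$ coming from $(x\ot1)n^{12}_2$, and $\difb z_2(1\ot\al y)m^{31}_1-\difb z_1(1\ot\al y)m^{31}_2$ coming from $-\difb z_2n^{21}_3+\difb z_1n^{21}_4$. You dispose of them with the identity $(x\ot1)(1\ot\al y)=1\ot\al yx$, which is false in $W_{(2)}^e$: the two factors sit in different tensor slots, so $(x\ot1)(1\ot\al y)=x\ot\al y$ and cannot be merged (contrast the first row, where $(1\ot x)(1\ot\al y)=1\ot\al yx$ is legitimate because both factors lie in the $W_{(2)}^{\mathrm{op}}$ slot). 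Moreover these leftover terms involve $m^{22}_1$, $m^{31}_1$, $m^{31}_2$, while what you need is a contribution $(1\ot\si(\al\vphi))m^{22}_2$, and neither \eqref{eq:cocyc-5} nor \eqref{eq:cocyc-6} converts the one into the other. The missing ingredient is \eqref{eq:cocyc-1}: commuting $1\ot\al y$ past $\difb z_i$ turns $\difb$ into $\difl$, the three terms combine into $(1\ot\al y)\bigl[(x\ot1)m^{22}_1+\difl z_2m^{31}_1-\difl z_1m^{31}_2\bigr]$, and \eqref{eq:cocyc-1} identifies the bracket with $(1\ot x)m^{22}_2$, whence $(1\ot\al y)(1\ot x)m^{22}_2=(1\ot x\al y)m^{22}_2=(1\ot\si(\al\vphi))m^{22}_2$, exactly the term the $\si$-twisted partition of unity requires. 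With this repair your argument coincides with the proof in the paper.
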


\begin{proof}
	We have
	\begin{align*}
	&\varphantom{=}(y\ot 1)n^{12}_1+(1\ot x)n^{12}_2-\diffd z_2n^{21}_1+\diffd z_1n^{21}_2\\
	&=(y\ot 1)[(1\ot \be_1)m^{31}_2-(1\ot \be_2)m^{31}_1]+(1\ot x)[(1\ot \si(\be_1))m^{31}_4-(1\ot \si(\be_2))m^{31}_3\\
	&\varphantom{=}{}+(1\ot \al y)m^{22}_1]-\diffd z_2[(1\ot \be_1)m^{40}_1+(1\ot \be_2)\De_2^{\pl_2}(\vphi)m^{22}_1]+\diffd z_1[(1\ot \be_2)m^{40}_1\\
	&\varphantom{=}{}-(1\ot \be_1)\De_1^{\pl_1}(\vphi)m^{22}_1]\\
	&=(1\ot \be_1)[(y\ot 1)m^{31}_2+(1\ot x)m^{31}_4-\diffd z_2m^{40}_1-\De_1^{\pl_1}(\vphi)\diffd z_1m^{22}_1]+(1\ot \be_2)\\
	&\varphantom{=}{}\cdot[-(y\ot 1)m^{31}_1-(1\ot x)m^{31}_3-\De_2^{\pl_2}(\vphi)\diffd z_2m^{22}_1+\diffd z_1m^{40}_1]+(1\ot\al yx)m^{22}_1\\
	&\overset{\dagger_1}{=}(1\ot \be_1)[\De_1(\vphi)m^{22}_1-\De_1^{\pl_1}(\vphi)\diffd z_1m^{22}_1]+(1\ot \be_2)[\De_2(\vphi)m^{22}_1-\De_2^{\pl_2}(\vphi)\diffd z_2m^{22}_1]\\
	&\varphantom{=}{}+(1\ot\al \vphi)m^{22}_1\\
	&\overset{\dagger_2}{=}(1\ot \be_1)(1\ot\vphi_1)m^{22}_1+(1\ot \be_2)(1\ot\vphi_2)m^{22}_1+(1\ot\al \vphi)m^{22}_1\\
	&=m^{22}_1
	\end{align*}
	where $\dagger_1$ follows from \eqref{eq:cocyc-4}, \eqref{eq:cocyc-3} , and $\dagger_2$ from Lemma \ref{lem:Del-partial} (1). Also,
	\begin{align*}
	&\varphantom{=}(1\ot y)n^{12}_1+(x\ot 1)n^{12}_2-\difb z_2n^{21}_3+\difb z_1n^{21}_4\\
	&=(1\ot y)[(1\ot \be_1)m^{31}_2-(1\ot \be_2)m^{31}_1]+(x\ot 1)[(1\ot \si(\be_1))m^{31}_4-(1\ot \si(\be_2))m^{31}_3
	\\
	&\varphantom{=}{}+(1\ot \al y)m^{22}_1]-\difb z_2[(1\ot \si(\be_1))m^{40}_2-(1\ot \al y)m^{31}_1+(1\ot \si(\be_2))\\
	&\varphantom{=}{}\cdot\Dell_2^{\si\pl_2}(\vphi)m^{22}_2]+\difb z_1[(1\ot \si(\be_2))m^{40}_2-(1\ot \al y)m^{31}_2-(1\ot \si(\be_1))\\
	&\varphantom{=}{}\cdot\Dell_1^{\si\pl_1}(\vphi)m^{22}_2]\\
	&=(1\ot \si(\be_1))[(1\ot y)m^{31}_2+(x\ot 1)m^{31}_4-\difb z_2m^{40}_2-\Dell_1^{\si\pl_1}(\vphi)\difb z_1m^{22}_2]\\
	&\varphantom{=}{}+(1\ot \si(\be_2))[-(1\ot y)m^{31}_1-(x\ot 1)m^{31}_3-\Dell_2^{\si\pl_2}(\vphi)\difb z_2m^{22}_2+\difb z_1m^{40}_2]\\
	&\varphantom{=}{}+(1\ot\al y)[(x\ot 1)m^{22}_1+\difl z_2m^{31}_1-\difl z_1m^{31}_2]\\
	&\overset{\dagger_3}{=}(1\ot \si(\be_1))[\Delb_1(\vphi)m^{22}_2-\Dell_1^{\si\pl_1}(\vphi)\difb z_1m^{22}_2]+(1\ot \si(\be_2))[\Delb_2(\vphi)m^{22}_2\\
	&\varphantom{=}{}-\Dell_2^{\si\pl_2}(\vphi)\difb z_2m^{22}_2]+(1\ot\al y)(1\ot x)m^{22}_2\\
	&\overset{\dagger_4}{=}(1\ot \si(\be_1))(1\ot\si(\vphi_1))m^{22}_2+(1\ot \si(\be_2))(1\ot\si(\vphi_2))m^{22}_2+(1\ot\si(\al \vphi))m^{22}_2\\
	&=m^{22}_2
	\end{align*}
	where $\dagger_3$ follows from \eqref{eq:cocyc-6}, \eqref{eq:cocyc-5}, \eqref{eq:cocyc-1}, and $\dagger_4$ from Lemma \ref{lem:Del-partial} (4).
\end{proof}

\begin{lemma}
	Eq.\ \eqref{eq:m-n-2} holds true.
\end{lemma}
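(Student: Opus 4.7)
The plan is to verify the four scalar identities packaged in Eq.\ \eqref{eq:m-n-2} following the same template that worked for Eq.\ \eqref{eq:m-n-1}: expand the right-hand side by substituting the explicit formulas for the $n^{\sbu\sbu}_{\sbu}$, collect terms by the base coefficients $(1\ot\be_1)$, $(1\ot\be_2)$, $(1\ot\si(\be_1))$, $(1\ot\si(\be_2))$, and $(1\ot\al y)$ that were introduced when we built $\mbf n$, and in each group use exactly one of the cocycle equations \eqref{eq:cocyc-1}--\eqref{eq:cocyc-8} together with the corresponding part of Lemma \ref{lem:Del-partial} to collapse the sum to a scalar multiple of the relevant $m^{31}_i$.

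First I would attack the first two components $m^{31}_1$ and $m^{31}_2$. For $m^{31}_1$, after expanding the right-hand side and commuting $(x\ot 1)$ past the $(1\ot\si(\be_1))$-factor of $n^{21}_3$ (which rewrites it as $(1\ot\be_1)(1\ot x)$ via the relation $\si(\be_1)x = x\be_1$ in $W_{(2)}$), the $(1\ot\be_1)$-group collects to $-\Dell_2(\vphi)m^{31}_2 + (x\ot 1)m^{40}_1 - (1\ot x)m^{40}_2 - \Dell_1^{\pl_1}(\vphi)\difl z_1\,m^{31}_1$, which by \eqref{eq:cocyc-7} and Lemma \ref{lem:Del-partial}(2) reduces to $(1\ot\vphi_1)m^{31}_1$. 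The $(1\ot\be_2)$-group should similarly collapse to $(1\ot\vphi_2)m^{31}_1$ after using \eqref{eq:cocyc-3} to eliminate the $m^{22}_1$-crossterm carried by the $\De_2^{\pl_2}(\vphi)$ piece of $n^{21}_1$, and another application of Lemma \ref{lem:Del-partial}. The residual $(1\ot\al y)$-group, which sees only the contribution from $n^{21}_3$, reduces to $(1\ot\al\vphi)m^{31}_1$ by \eqref{eq:cocyc-1}. Summing and using $\al\vphi+\be_1\vphi_1+\be_2\vphi_2=1$ yields $m^{31}_1$; the verification of the identity for $m^{31}_2$ is symmetric, using \eqref{eq:cocyc-7} and \eqref{eq:cocyc-4} in place of the analogous equations.

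Next I would handle $m^{31}_3$ and $m^{31}_4$, the "$\si$-twisted" components. The dominant base factors are now $(1\ot\si(\be_1))$, $(1\ot\si(\be_2))$, and $(1\ot\al y)$, while the relevant cocycle equations are \eqref{eq:cocyc-8} (playing the role that \eqref{eq:cocyc-7} did), \eqref{eq:cocyc-5}/\eqref{eq:cocyc-6} (to kill the $m^{22}_2$-crossterm carried by $\Dell_i^{\si\pl_i}(\vphi)$), and \eqref{eq:cocyc-2} (for the $(1\ot\al y)$-group). The required identity from Lemma \ref{lem:Del-partial} is now part (3) or (4), producing $1\ot\si(\vphi_i)$. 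Summing gives $(1\ot\si(\al\vphi+\be_1\vphi_1+\be_2\vphi_2))m^{31}_3 = m^{31}_3$, and likewise for $m^{31}_4$.

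The main obstacle will be purely notational: the four flavors of differential $\diffd$, $\difl$, $\difr$, $\difb$ interact with the $(x\ot 1)$ and $(1\ot x)$ factors through the twisting rules $xb = \si(b)x$ and $by = y\si(b)$, so every commutation of a polynomial coefficient past an $x$ or a $y$ introduces a $\si^{\pm 1}$. One must check that these twists are precisely what is needed so that a $(1\ot\be_i)$-group can, after commutation, be matched against the correct cocycle equation indexed by $\Dell$ rather than $\De$ (or by $\Delb$ rather than $\Delr$), and similarly for the $\si$-twisted side. As with Eq.\ \eqref{eq:m-n-1}, the detailed matrix expansion is lengthy but mechanical once the grouping principle has been identified; I would therefore present one of the four coordinates in full, say that for $m^{31}_1$, and indicate that the other three follow by the same mechanism.
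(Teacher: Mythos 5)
Your overall mechanism is exactly the paper's: expand the right-hand side of \eqref{eq:m-n-2}, commute $x$ and $y$ past the base coefficients via $xb=\si(b)x$, $by=y\si(b)$, group by $(1\ot\be_1)$, $(1\ot\be_2)$, $(1\ot\si(\be_1))$, $(1\ot\si(\be_2))$, $(1\ot\al y)$, close each group with one cocycle identity plus Lemma \ref{lem:Del-partial}, and finish with $\al\vphi+\be_1\vphi_1+\be_2\vphi_2=1$. The one group you actually compute --- the $(1\ot\be_1)$-group of the $m^{31}_1$-coordinate --- agrees with the paper's bracket and is indeed closed by \eqref{eq:cocyc-7} and Lemma \ref{lem:Del-partial}(2) (modulo the slip that it is $1\ot x$, not $x\ot 1$, that gets commuted past $1\ot\si(\be_1)$).

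However, for the groups you did not compute, the specific reductions you assert are wrong, and since this lemma consists entirely of that verification, this is a genuine gap rather than a presentational shortcut. Concretely: in the $m^{31}_1$-coordinate the $(1\ot\be_2)$-group is $\Dell_2(\vphi)m^{31}_1+\Dell_2^{\pl_2}(\vphi)\bigl[(x\ot1)m^{22}_1-(1\ot x)m^{22}_2-\difl z_1 m^{31}_2\bigr]$ and is closed by \eqref{eq:cocyc-1}, not \eqref{eq:cocyc-3}; the $(1\ot\al y)$-term needs no cocycle identity at all, only $yx=\vphi$. For $m^{31}_2$ the relevant equations are \eqref{eq:cocyc-1} and \eqref{eq:cocyc-7}, not \eqref{eq:cocyc-4}. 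For $m^{31}_3$ and $m^{31}_4$ the crossterms carried by $\De_i^{\pl_i}(\vphi)$ and $\Dell_i^{\si\pl_i}(\vphi)$ are eliminated by \eqref{eq:cocyc-2} --- equations \eqref{eq:cocyc-5} and \eqref{eq:cocyc-6} never enter the verification of \eqref{eq:m-n-2}; they belong to \eqref{eq:m-n-1} and \eqref{eq:m-n-3} --- while the $(1\ot\al y)$-groups require \eqref{eq:cocyc-3} and \eqref{eq:cocyc-4} respectively, and the relevant part of Lemma \ref{lem:Del-partial} is (3) in both twisted coordinates, producing $1\ot\si(\vphi_i)$. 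None of these corrections needs a new idea --- once the expansion is written out, the terms in each group force the right identity --- but as stated your recipe would not go through, and three of the four coordinates (and two of the three groups in the first) are left unchecked.
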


\begin{proof}
	We have
	\begin{align*}
	&\varphantom{=}-\Dell_2(\vphi)n^{12}_1+(x\ot 1)n^{21}_1-(1\ot x)n^{21}_3+\difl z_1n^{30}_1\\
	&=-\Dell_2(\vphi)[(1\ot \be_1)m^{31}_2-(1\ot \be_2)m^{31}_1]+(x\ot 1)[(1\ot \be_1)m^{40}_1\\
	&\varphantom{=}{}+(1\ot \be_2)\De_2^{\pl_2}(\vphi)m^{22}_1]-(1\ot x)[(1\ot \si(\be_1))m^{40}_2-(1\ot \al y)m^{31}_1+(1\ot \si(\be_2))\\
	&\varphantom{=}{}\cdot\Dell_2^{\si\pl_2}(\vphi)m^{22}_2]+\difl z_1[-(1\ot \be_1)\Dell_1^{\pl_1}(\vphi)m^{31}_1-(1\ot\be_2)\Dell_2^{\pl_2}(\vphi)m^{31}_2]\\
	&=(1\ot \be_1)[-\Dell_2(\vphi)m^{31}_2+(x\ot 1)m^{40}_1-(1\ot x)m^{40}_2-\Dell_1^{\pl_1}(\vphi)\difl z_1m^{31}_1]\\
	&\varphantom{=}{}+(1\ot \be_2)[\Dell_2(\vphi)m^{31}_1+\Dell_2^{\pl_2}(\vphi)(x\ot 1)m^{22}_1-\Dell_2^{\pl_2}(\vphi)(1\ot x)m^{22}_2\\
	&\varphantom{=}{}-\Dell_2^{\pl_2}(\vphi)\difl z_1m^{31}_2]+(1\ot\al yx)m^{31}_1\\
	&\overset{\dagger_1}{=}(1\ot \be_1)[\Dell_1(\vphi)m^{31}_1-\Dell_1^{\pl_1}(\vphi)\difl z_1m^{31}_1]+(1\ot \be_2)[\Dell_2(\vphi)m^{31}_1\\
	&\varphantom{=}{}-\Dell_2^{\pl_2}(\vphi)\difl z_2m^{31}_1]\\
	&\varphantom{=}{}+(1\ot\al \vphi)m^{31}_1\\
	&\overset{\dagger_2}{=}(1\ot \be_1)(1\ot\vphi_1)m^{31}_1+(1\ot \be_2)(1\ot\vphi_2)m^{31}_1+(1\ot\al \vphi)m^{31}_1\\
	&=m^{31}_1
	\end{align*}
	where $\dagger_1$ follows from \eqref{eq:cocyc-7}, \eqref{eq:cocyc-1}, and $\dagger_2$ from Lemma \ref{lem:Del-partial} (2). Next we have
	\begin{align*}
	&\varphantom{=}\Dell_1(\vphi)n^{12}_1+(x\ot 1)n^{21}_2-(1\ot x)n^{21}_4+\difl z_2n^{30}_1\\
	&=\Dell_1(\vphi)[(1\ot \be_1)m^{31}_2\!-\!(1\ot \be_2)m^{31}_1]+(x\ot 1)[(1\ot \be_2)m^{40}_1\!-\!(1\ot \be_1)\De_1^{\pl_1}(\vphi)m^{22}_1]\\
	&\varphantom{=}{}-(1\ot x)[(1\ot \si(\be_2))m^{40}_2-(1\ot \al y)m^{31}_2-(1\ot \si(\be_1))\Dell_1^{\si\pl_1}(\vphi)m^{22}_2]\\
	&\varphantom{=}{}+\difl z_2[-(1\ot \be_1)\Dell_1^{\pl_1}(\vphi)m^{31}_1-(1\ot\be_2)\Dell_2^{\pl_2}(\vphi)m^{31}_2]\\
	&=(1\ot \be_1)[\Dell_1(\vphi)m^{31}_2-\Dell_1^{\pl_1}(\vphi)(x\ot 1)m^{22}_1+\Dell_1^{\pl_1}(\vphi)(1\ot x)m^{22}_2-\Dell_1^{\pl_1}(\vphi)\\
	&\varphantom{=}{}\cdot\difl z_2m^{31}_1]+(1\ot \be_2)[-\Dell_1(\vphi)m^{31}_1+(x\ot 1)m^{40}_1-(1\ot x)m^{40}_2-\Dell_2^{\pl_2}(\vphi)\\
	&\varphantom{=}{}\cdot\difl z_2m^{31}_2]+(1\ot\al yx)m^{31}_2\\
	&\overset{\dagger_3}{=}(1\ot \be_1)[\Dell_1(\vphi)m^{31}_2\!-\!\Dell_1^{\pl_1}(\vphi)\difl z_1m^{31}_2]\!+\!(1\ot \be_2)[\Dell_2(\vphi)m^{31}_2\!-\!\Dell_2^{\pl_2}(\vphi)\difl z_2m^{31}_2]\\
	&\varphantom{=}{}+(1\ot\al \vphi)m^{31}_2\\
	&\overset{\dagger_4}{=}(1\ot \be_1)(1\ot\vphi_1)m^{31}_2+(1\ot \be_2)(1\ot\vphi_2)m^{31}_2+(1\ot\al \vphi)m^{31}_2\\
	&=m^{31}_2
	\end{align*}
	where $\dagger_3$, $\dagger_4$ again follow from \eqref{eq:cocyc-1}, \eqref{eq:cocyc-7}, and Lemma \ref{lem:Del-partial} (2) respectively. For the third,
	\begin{align*}
	&\varphantom{=}-\Delr_2(\vphi)n^{12}_2-(1\ot y)n^{21}_1+(y\ot 1)n^{21}_3+\difr z_1n^{30}_2\\
	&=-\Delr_2(\vphi)[(1\ot \si(\be_1))m^{31}_4\!-(1\ot \si(\be_2))m^{31}_3
	\!+(1\ot \al y)m^{22}_1]-(1\ot y)[(1\ot \be_1)m^{40}_1\\
	&\varphantom{=}{}+(1\ot \be_2)\De_2^{\pl_2}(\vphi)m^{22}_1]+(y\ot 1)[(1\ot \si(\be_1))m^{40}_2-(1\ot \al y)m^{31}_1+(1\ot \si(\be_2))\\
	&\varphantom{=}{}\cdot\Dell_2^{\si\pl_2}(\vphi)m^{22}_2]+\difr z_1[(1\ot \al y)m^{40}_1-(1\ot \si(\be_1))\De_1^{\si\pl_1}(\vphi)m^{31}_3-(1\ot \si(\be_2))\\
	&\varphantom{=}{}\cdot\De_2^{\si\pl_2}(\vphi)m^{31}_4]\\
	&=(1\ot \si(\be_1))[-\Delr_2(\vphi)m^{31}_4-(1\ot y)m^{40}_1+(y\ot 1)m^{40}_2-\De_1^{\si\pl_1}(\vphi)\difr z_1m^{31}_3]\\
	&\varphantom{=}{}+(1\ot \si(\be_2))[\Delr_2(\vphi)m^{31}_3-\De_2^{\si\pl_2}(\vphi)(1\ot y)m^{22}_1+\De_2^{\si\pl_2}(\vphi)(y\ot 1)m^{22}_2\\
	&\varphantom{=}{}-\De_2^{\si\pl_2}(\vphi)\difr z_1m^{31}_4]+(1\ot\al y)[-\De_2(\vphi)m^{22}_1-(y\ot 1)m^{31}_1+\diffd z_1m^{40}_1]\\
	&\overset{\dagger_5}{=}(1\ot \si(\be_1))[\Delr_1(\vphi)m^{31}_3-\De_1^{\si\pl_1}(\vphi)\difr z_1m^{31}_3]+(1\ot \si(\be_2))[\Delr_2(\vphi)m^{31}_3-\De_2^{\si\pl_2}(\vphi)\\
	&\varphantom{=}{}\cdot\difr z_2m^{31}_3]+(1\ot\al y)(1\ot x)m^{31}_3\\
	&\overset{\dagger_6}{=}(1\ot \si(\be_1))(1\ot\si(\vphi_1))m^{31}_3+(1\ot \si(\be_2))(1\ot\si(\vphi_2))m^{31}_3+(1\ot\si(\al \vphi))m^{31}_3\\
	&=m^{31}_3
	\end{align*}
	where $\dagger_5$ follows from \eqref{eq:cocyc-8}, \eqref{eq:cocyc-2}, \eqref{eq:cocyc-3}, and $\dagger_6$ from Lemma \ref{lem:Del-partial} (3). For the last,
	\begin{align*}
	&\varphantom{=}\Delr_1(\vphi)n^{12}_2-(1\ot y)n^{21}_2+(y\ot 1)n^{21}_4+\difr z_2n^{30}_2\\
	&=\Delr_1(\vphi)[(1\ot \si(\be_1))m^{31}_4-(1\ot \si(\be_2))m^{31}_3
	+(1\ot \al y)m^{22}_1]-(1\ot y)[(1\ot \be_2)m^{40}_1\\
	&\varphantom{=}{}-(1\ot \be_1)\De_1^{\pl_1}(\vphi)m^{22}_1]+(y\ot 1)[(1\ot \si(\be_2))m^{40}_2-(1\ot \al y)m^{31}_2-(1\ot \si(\be_1))\\
	&\varphantom{=}{}\cdot\Dell_1^{\si\pl_1}(\vphi)m^{22}_2]+\difr z_2[(1\ot \al y)m^{40}_1-(1\ot \si(\be_1))\De_1^{\si\pl_1}(\vphi)m^{31}_3-(1\ot \si(\be_2))\\
	&\varphantom{=}{}\cdot\De_2^{\si\pl_2}(\vphi)m^{31}_4]\\
	&=(1\ot \si(\be_1))[\Delr_1(\vphi)m^{31}_4-\De_1^{\si\pl_1}(\vphi)(1\ot y)m^{22}_1+\De_1^{\si\pl_1}(\vphi)(y\ot 1)m^{22}_2-\De_1^{\si\pl_1}(\vphi)\\
	&\varphantom{=}{}\cdot\difr z_2m^{31}_3]+(1\ot \si(\be_2))[-\Delr_1(\vphi)m^{31}_3-(1\ot y)m^{40}_1+(y\ot 1)m^{40}_2-\De_2^{\si\pl_2}(\vphi)\\
	&\varphantom{=}{}\cdot\difr z_2m^{31}_4]+(1\ot\al y)[\De_1(\vphi)m^{22}_1-(y\ot 1)m^{31}_2+\diffd z_2m^{40}_1]\\
	&\overset{\dagger_7}{=}(1\ot \si(\be_1))[\Delr_1(\vphi)m^{31}_4-\De_1^{\si\pl_1}(\vphi)\difr z_1m^{31}_4]+(1\ot \si(\be_2))[\Delr_2(\vphi)m^{31}_4-\De_2^{\si\pl_2}(\vphi)\\
	&\varphantom{=}{}\cdot\difr z_2m^{31}_4]+(1\ot\al \vphi)(1\ot x)m^{31}_4\\
	&\overset{\dagger_8}{=}(1\ot \si(\be_1))(1\ot\si(\vphi_1))m^{31}_4+(1\ot \si(\be_2))(1\ot\si(\vphi_2))m^{31}_4+(1\ot\si(\al \vphi))m^{31}_4\\
	&=m^{31}_4
	\end{align*}
	where $\dagger_7$ follows from \eqref{eq:cocyc-2}, \eqref{eq:cocyc-8}, \eqref{eq:cocyc-4}, and $\dagger_8$ from Lemma \ref{lem:Del-partial} (3).
\end{proof}

\begin{lemma}
	Eq.\ \eqref{eq:m-n-3} holds true.
\end{lemma}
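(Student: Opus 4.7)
The plan is to split \eqref{eq:m-n-3} into its two scalar components for $m^{40}_1$ and $m^{40}_2$, and to verify each along the lines of the two preceding lemmas; this is the most intricate of the three verifications. For the first component I substitute the explicit formulas for $n^{21}_1,\ldots,n^{30}_2$ into
$\Dell_1(\vphi)n^{21}_1+\Dell_2(\vphi)n^{21}_2+(y\ot 1)n^{30}_1+(1\ot x)n^{30}_2$,
and regroup the result by the scalar $B$-factors that appear in $\mathbf{n}$, namely $(1\ot\be_1)$, $(1\ot\be_2)$, and $(1\ot\al y)$. The regrouping uses the slide identities $(1\ot x)(1\ot\si(\be_i))=(1\ot\be_i)(1\ot x)$ and $(y\ot 1)(\si(b)\ot 1)=(b\ot 1)(y\ot 1)$ for $b\in B$, which in particular convert $(1\ot x)\De_i^{\si\pl_i}(\vphi)$ into $\De_i^{\pl_i}(\vphi)(1\ot x)$ and $(y\ot 1)\Dell_i^{\pl_i}(\vphi)$ into $\De_i^{\pl_i}(\vphi)(y\ot 1)$.

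Inside each $(1\ot\be_i)$-bracket I invoke the cocycle relation \eqref{eq:cocyc-3} (respectively \eqref{eq:cocyc-4}) to substitute $(y\ot 1)m^{31}_1+(1\ot x)m^{31}_3$ (respectively $(y\ot 1)m^{31}_2+(1\ot x)m^{31}_4$) by expressions in $m^{22}_1$ and $m^{40}_1$. After this substitution, Lemma~\ref{lem:Del-partial}(1) collapses the residual $\De_i^{\pl_i}(\vphi)\diffd z_i$ to $1\ot\vphi_i$, turning each $(1\ot\be_i)$-bracket into $(1\ot\vphi_i)m^{40}_1$ up to an $m^{22}_1$-residue. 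The $(1\ot\al y)$-bracket is immediate, since $(1\ot x)(1\ot\al y)m^{40}_1=(1\ot\al yx)m^{40}_1=(1\ot\al\vphi)m^{40}_1$. Summing the three brackets and invoking the partition of unity $\al\vphi+\be_1\vphi_1+\be_2\vphi_2=1$ yields $m^{40}_1$, provided the residual $m^{22}_1$-coefficients cancel; this cancellation is effected by appealing to \eqref{eq:cocyc-1} and \eqref{eq:cocyc-7} to eliminate the cross-terms generated by the $\si$-twist mismatch between $\Dell_j(\vphi)$ and $\De_j(\vphi)$. The second component is handled by the symmetric argument using \eqref{eq:cocyc-5}, \eqref{eq:cocyc-6}, \eqref{eq:cocyc-2}, and Lemma~\ref{lem:Del-partial}(4); the roles of $x$ and $y$ are interchanged and every $B$-scalar acquires a global $\si$-twist which still evaluates to $\si(1)=1$.

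The main obstacle is bookkeeping: the argument involves many variants of the noncommutative partial derivation $\De_i^{\pl_j}$ carrying $\si$-twists on either tensor factor, and the $\si^{\pm 1}$-shifts picked up when sliding $x$ or $y$ past elements of $B\ot B$ must be tracked meticulously. Conceptually there is nothing new beyond the proofs of \eqref{eq:m-n-1} and \eqref{eq:m-n-2}.
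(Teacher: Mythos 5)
Your plan---substitute the definitions of $\mathbf{n}$, slide $x$ and $y$ across $B\ot B$ so as to regroup by the scalars $(1\ot\be_1)$, $(1\ot\be_2)$, $(1\ot\al y)$, close each bracket with a cocycle identity and Lemma \ref{lem:Del-partial}, and finish with $\al\vphi+\be_1\vphi_1+\be_2\vphi_2=1$---is exactly the paper's, and your slide identities are correct. But at both points where the computation has to close you invoke the wrong identities, and for a lemma whose entire content is this bookkeeping that is a genuine gap. In the first component there is no residual $m^{22}_1$-term to be cancelled by \eqref{eq:cocyc-1} and \eqref{eq:cocyc-7}: after regrouping, the $(1\ot\be_1)$-bracket is $\De_1(\vphi)m^{40}_1-\De_1^{\pl_1}(\vphi)\bigl[\De_2(\vphi)m^{22}_1+(y\ot 1)m^{31}_1+(1\ot x)m^{31}_3\bigr]$, and the single substitution of \eqref{eq:cocyc-3} replaces the whole bracketed sum by $\diffd z_1\,m^{40}_1$; the $m^{22}_1$-contribution coming from $n^{21}_2$ is absorbed at that very step, and Lemma \ref{lem:Del-partial}(1) then yields $(1\ot\vphi_1)m^{40}_1$ with nothing left over (likewise \eqref{eq:cocyc-4} for the $(1\ot\be_2)$-bracket). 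If the ``$\si$-twist mismatch'' you want to repair is the $\Dell_j(\vphi)$ printed in \eqref{eq:m-n-3} versus the $\De_j(\vphi)$ your grouping produces, note that the component of $d^3_{\mc Q}$ being computed is dual to $t_{21}$, whose first row is $(\De_1(\vphi)\;\;\De_2(\vphi)\;\;0\;\;0)$; that is the identity to verify, and no cocycle relation converts $\Dell_j(\vphi)n^{21}_j$ into $\De_j(\vphi)n^{21}_j$---trying to bridge that difference with \eqref{eq:cocyc-1} and \eqref{eq:cocyc-7} would fail.

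In the second component the citation of \eqref{eq:cocyc-2} cannot enter: that relation involves $m^{22}_1$, $m^{22}_2$, $m^{31}_3$, $m^{31}_4$ with coefficients $\difr z_i$, none of which occur after the regrouping. What is needed, besides \eqref{eq:cocyc-5}, \eqref{eq:cocyc-6} and Lemma \ref{lem:Del-partial}(4) for the $(1\ot\si(\be_i))$-brackets, is \eqref{eq:cocyc-7}: it turns the $(1\ot\al y)$-bracket $-\Dell_1(\vphi)m^{31}_1-\Dell_2(\vphi)m^{31}_2+(x\ot 1)m^{40}_1$ into $(1\ot x)m^{40}_2$, whence $(1\ot\al y)(1\ot x)m^{40}_2=(1\ot\si(\al\vphi))m^{40}_2$, and the three brackets sum to $\bigl(1\ot\si(\al\vphi+\be_1\vphi_1+\be_2\vphi_2)\bigr)m^{40}_2=m^{40}_2$. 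So the route is the right one, but the two closing steps must be redone: \eqref{eq:cocyc-3} and \eqref{eq:cocyc-4} alone suffice in the first component, and \eqref{eq:cocyc-5}, \eqref{eq:cocyc-6}, \eqref{eq:cocyc-7} in the second.
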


\begin{proof}
	We have
	\begin{align*}
	&\varphantom{=}\De_1(\vphi)n^{21}_1+\De_2(\vphi)n^{21}_2+(y\ot 1)n^{30}_1+(1\ot x)n^{30}_2\\
	&=\De_1(\vphi)[(1\ot \be_1)m^{40}_1+(1\ot \be_2)\De_2^{\pl_2}(\vphi)m^{22}_1]+\De_2(\vphi)[(1\ot \be_2)m^{40}_1-(1\ot \be_1)\\
	&\varphantom{=}{}\cdot\De_1^{\pl_1}(\vphi)m^{22}_1]+(y\ot 1)[-(1\ot \be_1)\Dell_1^{\pl_1}(\vphi)m^{31}_1-(1\ot\be_2)\Dell_2^{\pl_2}(\vphi)m^{31}_2]\\
	&\varphantom{=}{}+(1\ot x)[(1\ot \al y)m^{40}_1-(1\ot \si(\be_1))\De_1^{\si\pl_1}(\vphi)m^{31}_3-(1\ot \si(\be_2))\De_2^{\si\pl_2}(\vphi)m^{31}_4]\\
	&=(1\ot \be_1)[\De_1(\vphi)m^{40}_1-\De_1^{\pl_1}(\vphi)\De_2(\vphi)m^{22}_1-\De_1^{\pl_1}(\vphi)(y\ot 1)m^{31}_1-\De_1^{\pl_1}(\vphi)\\
	&\varphantom{=}{}\cdot(1\ot x)m^{31}_3]+(1\ot \be_2)[\De_2^{\pl_2}(\vphi)\De_1(\vphi)m^{22}_1+\De_2(\vphi)m^{40}_1-\De_2^{\pl_2}(\vphi)(y\ot 1)m^{31}_2\\
	&\varphantom{=}{}-\De_2^{\pl_2}(\vphi)(1\ot x)m^{31}_4]+(1\ot\al yx)m^{40}_1\\
	&\overset{\dagger_1}{=}(1\ot \be_1)[\De_1(\vphi)m^{40}_1-\De_1^{\pl_1}(\vphi)\diffd z_1m^{40}_1]+(1\ot \be_2)[\De_2(\vphi)m^{40}_1-\De_2^{\pl_2}(\vphi)\diffd z_2m^{40}_1]\\
	&\varphantom{=}{}+(1\ot\al \vphi)m^{40}_1\\
	&\overset{\dagger_2}{=}(1\ot \be_1)(1\ot \vphi_1)m^{40}_1+(1\ot \be_2)(1\ot \vphi_2)m^{40}_1+(1\ot\al \vphi)m^{40}_1\\
	&=m^{40}_1
	\end{align*}
	where $\dagger_1$ follows from \eqref{eq:cocyc-3}, \eqref{eq:cocyc-4}, and $\dagger_2$ from Lemma \ref{lem:Del-partial} (1). Furthermore,
	\begin{align*}
		&\varphantom{=}\Delb_1(\vphi)n^{21}_3+\Delb_2(\vphi)n^{21}_4+(1\ot y)n^{30}_1+(x\ot 1)n^{30}_2\\
		&=\Delb_1(\vphi)[(1\ot \si(\be_1))m^{40}_2-(1\ot \al y)m^{31}_1+(1\ot \si(\be_2))\Dell_2^{\si\pl_2}(\vphi)m^{22}_2]\\
		&\varphantom{=}{}+\Delb_2(\vphi)[(1\ot \si(\be_2))m^{40}_2-(1\ot \al y)m^{31}_2-(1\ot \si(\be_1))\Dell_1^{\si\pl_1}(\vphi)m^{22}_2]\\
		&\varphantom{=}{}+(1\ot y)[-(1\ot \be_1)\Dell_1^{\pl_1}(\vphi)m^{31}_1-(1\ot\be_2)\Dell_2^{\pl_2}(\vphi)m^{31}_2]\\
		&\varphantom{=}{}+(x\ot 1)[(1\ot \al y)m^{40}_1-(1\ot \si(\be_1))\De_1^{\si\pl_1}(\vphi)m^{31}_3-(1\ot \si(\be_2))\De_2^{\si\pl_2}(\vphi)m^{31}_4]\\
		&=(1\ot \si(\be_1))[\Delb_1(\vphi)m^{40}_2-\Dell_1^{\si\pl_1}(\vphi)\Delb_2(\vphi)m^{22}_2-\Dell_1^{\si\pl_1}(\vphi)(1\ot y)m^{31}_1\\
		&\varphantom{=}{}-\Dell_1^{\si\pl_1}(\vphi)(x\ot 1)m^{31}_3]+(1\ot \si(\be_2))[\Dell_2^{\si\pl_2}(\vphi)\Delb_1(\vphi)m^{22}_2+\Delb_2(\vphi)m^{40}_2\\
		&\varphantom{=}{}-\Dell_2^{\si\pl_2}(\vphi)(1\ot y)m^{31}_2-\Dell_2^{\si\pl_2}(\vphi)(x\ot 1)m^{31}_4]+(1\ot\al y)[-\Dell_1(\vphi)m^{31}_1\\
		&\varphantom{=}{}-\Dell_2(\vphi)m^{31}_2+(x\ot 1)m^{40}_1]\\
		&\overset{\dagger_3}{=}(1\ot \si(\be_1))[\Delb_1(\vphi)m^{40}_2-\Dell_1^{\si\pl_1}(\vphi)\difb z_1m^{40}_2]+(1\ot \si(\be_2))[\Delb_2(\vphi)m^{40}_2\\
		&\varphantom{=}{}-\Dell_2^{\si\pl_2}(\vphi)\difb z_2m^{40}_2]+(1\ot\si(\al \vphi))m^{40}_2\\
		&\overset{\dagger_4}{=}(1\ot \si(\be_1))(1\ot \si(\vphi_1))m^{40}_2+(1\ot \si(\be_2))(1\ot \si(\vphi_2))m^{40}_2+(1\ot\si(\al \vphi))m^{40}_2\\
		&=m^{40}_2
	\end{align*}
	where $\dagger_3$ follows from \eqref{eq:cocyc-5}, \eqref{eq:cocyc-6}, \eqref{eq:cocyc-7}, and $\dagger_4$ from Lemma \ref{lem:Del-partial} (4).
\end{proof}

\subsection{Proof of necessity}

Now suppose $(\vphi,\vphi_1,\vphi_2)\neq B$. Let us first consider the situation that $\kk$ is algebraically closed. The arguments presented below are mainly inspired by \cite{Bavula:gldim-polyn}.

\begin{lemma}\cite{Bavula:gldim-polyn}\label{lem:infty-gld-1}
	If $\vphi=0$, then then $W_{(2)}$ has infinity global dimension. 
\end{lemma}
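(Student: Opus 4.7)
The plan is to exhibit a specific left $W_{(2)}$-module of infinite projective dimension. Because $\vphi=0$, the defining relations yield $yx=xy=0$, so right multiplication by $x$ and by $y$ are left $W_{(2)}$-linear endomorphisms of $W_{(2)}$ whose compositions (in either order) vanish. The natural test module is $W_{(2)}/W_{(2)}x$, and the sequence
\[
\cdots\xrightarrow{\;\cdot x\;}W_{(2)}\xrightarrow{\;\cdot y\;}W_{(2)}\xrightarrow{\;\cdot x\;}W_{(2)}\twoheadrightarrow W_{(2)}/W_{(2)}x\to 0
\]
is a $2$-periodic complex of free left $W_{(2)}$-modules. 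The first task is to verify that it is exact.

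For this I would invoke the standard normal form for a GWA: every $w\in W_{(2)}$ is uniquely written as $b_0+\sum_{n\geq 1}b_n^+x^n+\sum_{n\geq 1}b_n^-y^n$ with $b_0,b_n^\pm\in B$. Since $y^nx=y^{n-1}(yx)=0$ for $n\geq 1$, right multiplication by $x$ sends such a $w$ to $b_0x+\sum b_n^+x^{n+1}$, which vanishes only if each $b_0$ and each $b_n^+$ is zero, because $B=\kk[z_1,z_2]$ is a domain. Thus $\ker(\cdot x)=\bigoplus_{n\geq 1}By^n$, which coincides with the image $W_{(2)}y$ of right multiplication by $y$ (computed symmetrically, via $x^ny=0$). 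The mirror calculation yields $\ker(\cdot y)=W_{(2)}x$, so the displayed complex is a $2$-periodic free resolution of $W_{(2)}/W_{(2)}x$.

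It remains to find a module on which both differentials become zero. Since $\vphi=0$, for every maximal ideal $\mf m$ of $B$ the quotient $M_{\mf m}:=B/\mf m$ admits a simple $W_{(2)}$-module structure extending its $B$-action, with $x$ and $y$ both acting as zero; the relations $yx=\vphi=0$ and $xy=\si(\vphi)=0$ are then automatically satisfied. Applying $\Hom_{W_{(2)}}(-,M_{\mf m})$ to the periodic resolution and identifying $\Hom_{W_{(2)}}(W_{(2)},M_{\mf m})$ with $M_{\mf m}$ via evaluation at $1$, every differential becomes multiplication by $x$ or $y$ on $M_{\mf m}$, and hence is zero. Therefore
\[
\Ext^n_{W_{(2)}}(W_{(2)}/W_{(2)}x,\,M_{\mf m})\cong M_{\mf m}\neq 0
\]
for all $n\geq 0$, forcing $\pdim(W_{(2)}/W_{(2)}x)=\infty$, and so $\gldim W_{(2)}=\infty$.

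The only mildly delicate step is the kernel computation underpinning exactness of the periodic resolution; once the normal form is in hand, this reduces to the fact that $B$ is an integral domain, so I do not expect serious obstacles. The argument is the two-variable specialization of Bavula's strategy in \cite{Bavula:gldim-polyn}, made particularly clean by $\vphi\equiv 0$, which allows $M_{\mf m}$ to exist for \emph{every} maximal ideal of $B$ rather than only those in the vanishing locus of $\vphi$.
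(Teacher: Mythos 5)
Your argument is correct, and it is genuinely a proof, whereas the paper itself gives none: Lemma \ref{lem:infty-gld-1} is simply quoted from Bavula's paper on global dimensions of generalized Weyl algebras. What you have written is essentially a self-contained two-variable instance of Bavula's strategy, and it also mirrors in spirit the paper's own proof of the companion Lemma \ref{lem:infty-gld-2}: there the author exhibits a nonvanishing $\Tor$ in arbitrarily high degrees using the periodicity (``alternate'' structure) of the resolution $\Tot\mc P_{\sbu\sbu}$ and a pair of modules on which $x$, $y$ and $\mathfrak{m}$ act trivially; you instead exploit the extra degeneration $yx=xy=0$ available when $\vphi=0$ to write down by hand the $2$-periodic free resolution $\cdots\xrightarrow{\cdot x}W_{(2)}\xrightarrow{\cdot y}W_{(2)}\xrightarrow{\cdot x}W_{(2)}\to W_{(2)}/W_{(2)}x\to 0$ and test it against $B/\mathfrak{m}$ with $x$, $y$ acting by zero, getting $\Ext^n\neq 0$ for all $n$. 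Two small remarks. First, the key input you should cite explicitly is that $W_{(2)}$ is a free left $B$-module on $\{x^n\}_{n\ge 0}\cup\{y^m\}_{m\ge 1}$, which holds for an arbitrary central $\vphi$, including $\vphi=0$ (Bavula); granted that, your kernel computations are right. Second, the appeal to $B$ being a domain is superfluous: after killing the $y$-part via $y^nx=0$, the element $wx$ has the $b_0$ and $b_n^+$ appearing directly as the normal-form coefficients of $x^{n+1}$, so uniqueness of the normal form alone forces them to vanish; in particular your argument works verbatim over any base ring $B$, which is consonant with Bavula's general statement. With these points made precise, the proof is complete and can stand in place of the citation.
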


\begin{lemma}\label{lem:infty-gld-2}
	Suppose that $\kk$ is algebraically closed and $(\vphi,\vphi_1,\vphi_2)\neq B$. If $\vphi\neq 0$ then $W_{(2)}$ has infinity global dimension. 
\end{lemma}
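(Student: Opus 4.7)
The plan is to follow Bavula's strategy from \cite{Bavula:gldim-polyn}: exhibit a simple (or at least finitely generated torsion) left $W_{(2)}$-module supported at the singular locus of $\vphi$ and show that its projective dimension is infinite. First, since $\kk$ is algebraically closed and $(\vphi,\vphi_1,\vphi_2)$ is a proper ideal of $B$, by Hilbert's Nullstellensatz there exists a point $p=(a_1,a_2)\in\kk^2$ at which $\vphi$, $\vphi_1$, and $\vphi_2$ all vanish. Equivalently, for the maximal ideal $\mf{m}=(z_1-a_1,z_2-a_2)$ we have $\vphi\in\mf{m}^2$, so that the hypothesis $\vphi\neq 0$ means $\vphi$ is a nonzero element of $\mf{m}^2$, i.e.\ $\mf{m}$ is a genuine singular point of the variety $V(\vphi)$.

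Next, I would associate to $\mf{m}$ a simple left $W_{(2)}$-module $M$ whose $B$-support is the orbit $\mc{O}(\mf{m})=\{\si^n(\mf{m}):n\in\inn\}$. In the generic (infinite orbit) case, this is the module $M=\bigoplus_{n\in\inn}B/\si^n(\mf{m})$, where $B$ acts summandwise, and where $x$ (resp.\ $y$) shifts the summand $B/\si^n(\mf{m})$ into $B/\si^{n-1}(\mf{m})$ (resp.\ $B/\si^{n+1}(\mf{m})$); the relations $yx=\vphi$, $xy=\si(\vphi)$ are automatic because $\vphi\in\si^n(\mf{m})$ for every $n$ such that $p$ is in $V(\vphi)$. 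In the case of a finite orbit one uses the analogous cyclic module. In either case the key invariant is the action of $yx$ on the summand $B/\mf{m}$, which is the image of $\vphi$ in $B/\mf{m}$; being zero and in fact in $\mf{m}^2$ is the source of the infinite-dimensionality of the $\Ext$-groups.

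To compute $\pdim_{W_{(2)}}M$, I would apply $\Hom_{W_{(2)}}(-,M)$ to an adapted resolution of $M$ built out of Koszul-style blocks over $B$ at the points of $\mc{O}(\mf{m})$, glued by $x$ and $y$ in the same pattern as the homotopy double complex $\mc{P}_{\sbu\sbu}$ of \S\ref{subsec:constr-homo-double}. Because that complex is alternate in $[3,+\infty)$ by Proposition \ref{prop:free-resol-GWA}, the resulting $\Ext^n(M,M)$ is periodic of period $2$ for $n\geq 3$, so it suffices to display a single nonzero class in $\Ext^n(M,M)$ for one $n\geq 3$. The crucial point is that, after reduction modulo $\mf{m}$, the connecting differentials given by $\De_i(\vphi)$, $\Dell_i(\vphi)$, $\Delr_i(\vphi)$, $\Delb_i(\vphi)$ all evaluate to $\vphi_i(p)=0$, so the differentials in $\Hom_{W_{(2)}}(-,M)$ pick up no contribution from those blocks and a nontrivial cocycle survives.

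The hardest step, as I see it, will be producing this explicit nonzero $\Ext$-class: one must carefully match up the periodic tail of the resolution with the shifts induced by $\si$ on the orbit and then verify that the cocycle one writes down is not a coboundary. This is where the case analysis branches according to whether $p$ is a $\si$-fixed point (so $\mf{m}$ is $\si$-stable and the combinatorics degenerate) or sits in an infinite orbit (where all translates $\si^n(\vphi)$ land in $\si^n(\mf{m})^2$ simultaneously only at the point $p$). In both subcases, the vanishing $\vphi(p)=\vphi_1(p)=\vphi_2(p)=0$ is precisely what forces $\Ext^{n}(M,M)\neq 0$ for all sufficiently large $n$, and hence $\gldim W_{(2)}=\infty$.
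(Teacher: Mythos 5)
Your overall skeleton matches the paper's: pick, via the Nullstellensatz, a maximal ideal $\mf m=(z_1-\lam_1,z_2-\lam_2)$ containing $(\vphi,\vphi_1,\vphi_2)$, exploit the fact that the connecting maps built from $\De_i(\vphi)$ die modulo $\mf m$ (since $\mu\De_i(\vphi)=\vphi_i\in\mf m$), and use the alternate-in-$[3,+\infty)$ shape of $\Tot\mc P_{\sbu\sbu}$ to promote nonvanishing in one degree to nonvanishing in infinitely many. But there are two genuine gaps. First, the module you propose is not well defined as stated: on $\bigoplus_{n}B/\si^n(\mf m)$ with $x,y$ acting as plain shifts, the relations $yx=\vphi$, $xy=\si(\vphi)$ force $\vphi$ to act as the shift composition on \emph{every} summand, i.e.\ you would need $\vphi\in\si^n(\mf m)$ for all $n$, which only holds if the entire $\si$-orbit of the point lies in $V(\vphi)$ --- not a consequence of your hypotheses. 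Handling this correctly requires Bavula's break/half-orbit construction with the case analysis you allude to, or (as the paper does) sidestepping weight modules altogether by taking the cyclic one-sided modules $M=W_{(2)}/I_r$ and $N=W_{(2)}/I_l$, where $I_r$ is the right ideal $(y,z_1-\lam_1,z_2-\lam_2)$ and $I_l$ the left ideal $(x,z_1-\lam_1,z_2-\lam_2)$, and proving $\Tor_4^{W_{(2)}}(M,N)\neq 0$.

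Second, and more seriously, the heart of the argument --- producing an explicit nonzero class in high degree and showing it is not a (co)boundary --- is exactly the step you defer (``the hardest step \dots will be producing this explicit nonzero Ext-class''), so the proposal is a plan rather than a proof. In the paper this step is short and concrete: the element $(\bar 1\ot\bar 1,0)\in M\ot_{W_{(2)}}\mc P_{40}\ot_{W_{(2)}}N$ is a $4$-cycle because $t_{21}$ evaluates to $(\e_{\mf m}\ot\e_{\mf m})(\De_i(\vphi))=0$ and $d^h_{30}$ lands on $\bar y\ot\bar 1$, $\bar 1\ot\bar x$, which vanish in $M\ot N$; and it is not a boundary by a $\inn$-grading argument ($|x|=1$, $|y|=-1$, with $M=M_{\geq0}$, $N=N_{\leq0}$) that forces any potential preimage to contribute only terms of the form $a'\lam_i\ot a''-a'\ot\lam_i a''=0$. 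Your route through $\Ext^n(M,M)$ for a simple weight module could in principle work, but without the corrected module construction and an analogous explicit nonvanishing computation it does not yet establish the lemma.
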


%We will prove that $W$ has infinite global dimension in this case, and hence has infinite Hochschild cohomological dimension. If $\vphi=0$, then $\gldim W=\infty$, which has been proven by Bavula []. Now let us consider the situation $\vphi\neq 0$.

\begin{proof}
	Recall from Proposition \ref{prop:free-resol-GWA} that $\Tot \mc{P}_{\sbu\sbu}$ is alternate. For any right $W_{(2)}$-module $M$ and any left $W_{(2)}$-module $N$, we have $\Tor^{W_{(2)}}_4(M,N)=\Tor^{W_{(2)}}_6(M,N)=\Tor^{W_{(2)}}_8(M,N)=\cdots$. So, it suffices to show that there exist $M$ and $N$ such that $\Tor^{W_{(2)}}_4(M,N)\neq 0$. 
	
Since $(\vphi,\vphi_1,\vphi_2)$ is proper and $\kk$ is algebraically closed, there exists a maximal ideal $\mf{m}=(z_1-\lam_1,z_2-\lam_2)$ in $B$ containing $(\vphi,\vphi_1,\vphi_2)$. Let $\varepsilon_{\mf{m}}\colon B\to B/\mf{m}$ be the projection. We claim that $(\varepsilon_{\mf{m}}\ot\varepsilon_{\mf{m}})(\De_1(\vphi))=(\varepsilon_{\mf{m}}\ot\varepsilon_{\mf{m}})(\De_2(\vphi))=0$. In fact, we write $\De_1(\vphi)=\sum b'\ot b''$ using Sweedler's notation, then
\begin{align*}
(\varepsilon_{\mf{m}}\ot\varepsilon_{\mf{m}})(\De_1(\vphi))&=\sum\varepsilon_{\mf{m}}(b')\ot\varepsilon_{\mf{m}}(b'')=\sum\varepsilon_{\mf{m}}(b')\varepsilon_{\mf{m}}(b'')\ot\bar{1}\\
&=\varepsilon_{\mf{m}}\biggl(\sum b'b''\biggr)\ot\bar{1}=\varepsilon_{\mf{m}}(\vphi_1)\ot\bar{1}=0
\end{align*}
and similarly for $\De_2(\vphi)$.

Denote by $I_r$ and $I_l$ respectively the right ideal in $W_{(2)}$ generated by $y$, $z_1-\lam_1$, $z_2-\lam_2$, and the left ideal in $W_{(2)}$ generated by $x$, $z_1-\lam_1$, $z_2-\lam_2$. By analyzing the basis elements of $W_{(2)}$, we know that as $\kk$-modules,
\begin{gather*}
I_r=\bigoplus_{i\geq 1}By^i\oplus\bigoplus_{j\geq 0}\mf{m}x^j, \\
I_l=\bigoplus_{i\geq 0}\mf{m}y^i\oplus\bigoplus_{j\geq 1}Bx^j,
\end{gather*}
which are both proper. Hence the right $W_{(2)}$-module $M=W_{(2)}/I_r$ and the left $W_{(2)}$-module $N=W_{(2)}/I_l$ are nonzero. Moreover, by abuse of notations, we also refer to $\e_{\mf m}$ the map $B\to M$ or $B\to N$ by identifying $B/\mf m$ with the summand of $M$ or $N$. 

Let us consider $\Tor^{W_{(2)}}_4(M, N)$, which is the fourth homology group of $M\ot_{W_{(2)}}(\Tot\mc P_{\sbu\sbu})\ot_{W_{(2)}}N$. The following is a part of $M\ot_{W_{(2)}}\mc P_{\sbu\sbu}\ot_{W_{(2)}}N$:
\[
\xymatrix{
	(M\ot N)^4 & & 	(M\ot N)^4 \ar[d]^-{d^v_{40}} \\
	& (M\ot N)^2 & \boxed{(M\ot N)^2} \ar[l]^-{d^h_{30}}\ar[llu]_-{t_{21}} & (M\ot N)^2 \ar[l]^-{d^h_{40}}
	}
\]
with $M\ot_{W_{(2)}}\mc P_{40}\ot_{W_{(2)}}N$ displayed in box. Consider the element $(\bar{1}\ot\bar{1}, 0)$ in $M\ot_{W_{(2)}}\mc P_{40}\ot_{W_{(2)}}N$. We have
\begin{align*}
t_{21}(\bar{1}\ot\bar{1}, 0)&=((\varepsilon_{\mf{m}}\ot\varepsilon_{\mf{m}})(\De_1(\vphi)),(\varepsilon_{\mf{m}}\ot\varepsilon_{\mf{m}})(\De_2(\vphi)),0,0)=0,\\
d^h_{30}(\bar{1}\ot\bar{1}, 0)&=(\bar{y}\ot \bar{1}, \bar{1}\ot \bar{x})=0,
\end{align*}
and thus $(\bar{1}\ot\bar{1}, 0)$ gives rise to a $4$-cycle in $M\ot_{W_{(2)}}(\Tot\mc P_{\sbu\sbu})\ot_{W_{(2)}}N$. Were $(\bar{1}\ot\bar{1}, 0)$ a boundary, it would be of the form $d^v_{40}(a_1,a_2,a_3,a_4)+d^h_{40}(b_1,b_2)$. It follows that
\begin{align*}
\bar{1}\ot\bar{1}&=\sum (a_1'\triangleleft z_1\ot a_1''-a_1'\ot z_1\triangleright a_1'')+\sum (a_2'\triangleleft z_2\ot a_2''-a_2'\ot z_2\triangleright a_2'')\\
&\varphantom{=}{}-\sum b_1'\triangleleft x\ot b_1''+\sum b_2'\ot y\triangleright b_2''.
\end{align*}
Since $W_{(2)}$ is a $\inn$-graded algebra by setting $|x|=1$ and $|y|=-1$, $M$, $N$ are graded $W_{(2)}$-modules with $M=M_{\geq 0}=\oplus_{j\geq 0}\kk x^j$ and $N=N_{\leq 0}=\oplus_{i\geq 0}\kk y^i$. By taking degree into account, we may assume $a_1$, $a_2\in M_0\ot N_0$, $b_1\in M_{-1}\ot N_0$, and $b_2\in M_0\ot N_1$. This forces $b_1=b_2=0$, and $a_1'$, $a_1''$, $a_2'$, $a_2''\in B/\mf{m}$. Thus the above equality is simplified into
\[
\bar{1}\ot\bar{1}=\sum (a_1'\lam_1\ot a_1''-a_1'\ot \lam_1a_1'')+\sum (a_2'\lam_2\ot a_2''-a_2'\ot \lam_2a_2'')=0,
\]
a contradiction.

Thus we catch a nontrivial 4-class and hence $\Tor^{W_{(2)}}_4(M,N)\neq 0$, as desired. % and consequently $\Tor^W_n(M,N)\neq 0$ for all even $n\geq 4$. Summarizing, the global dimension and hence the Hochschild cohomological dimension of $W$ is infinity if $(\vphi,\vphi_1,\vphi_2)\neq A$.
\end{proof}

\begin{proposition}
	If $(\vphi,\vphi_1,\vphi_2)\neq B$, then $W_{(2)}$ has infinite global dimension. Accordingly, $W_{(2)}$ is not homologically smooth.
\end{proposition}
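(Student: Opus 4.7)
The strategy is a reduction to the algebraically closed case of Lemma \ref{lem:infty-gld-2}, via faithfully flat base change to the algebraic closure $\bar\kk$ of $\kk$. If $\vphi = 0$, Lemma \ref{lem:infty-gld-1} applies directly and we are done; so assume $\vphi \neq 0$. Set $\bar W = W_{(2)} \otimes_\kk \bar\kk$, which is naturally the generalized Weyl algebra $\bar\kk[z_1,z_2](\bar\si, \vphi)$, where $\bar\si$ is the $\bar\kk$-linear extension of $\si$. Faithful flatness of $\bar B := \bar\kk[z_1,z_2]$ over $B$ ensures that the extended ideal $(\vphi,\vphi_1,\vphi_2)\bar B$ remains proper, while $\vphi$ is still nonzero in $\bar B$. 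Lemma \ref{lem:infty-gld-2} therefore yields $\gldim \bar W = \infty$.

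To transfer this conclusion back to $W_{(2)}$, I would reproduce the module construction one level lower. Pick the maximal ideal $\bar{\mf m} = (z_1-\lam_1, z_2-\lam_2) \subset \bar B$ containing $(\vphi,\vphi_1,\vphi_2)\bar B$ used in the proof of Lemma \ref{lem:infty-gld-2}, and contract it to $\mf m = \bar{\mf m} \cap B$; by integrality of $\bar B$ over $B$, the ideal $\mf m$ is maximal in $B$ and still contains $(\vphi,\vphi_1,\vphi_2)$. Define $W_{(2)}$-modules $M = W_{(2)}/I_r$ and $N = W_{(2)}/I_l$ with $I_r = y W_{(2)} + \mf m W_{(2)}$ and $I_l = W_{(2)} x + W_{(2)} \mf m$; the degreewise decomposition of $I_r, I_l$ and the resulting $\inn$-graded structure of $M, N$ mirror those of Lemma \ref{lem:infty-gld-2}. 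By flat base change for Tor,
\[
\Tor^{W_{(2)}}_4(M,N) \otimes_\kk \bar\kk \;\cong\; \Tor^{\bar W}_4\bigl(M \otimes_\kk \bar\kk,\; N \otimes_\kk \bar\kk\bigr),
\]
and a Chinese-remainder decomposition of $\bar B/\mf m \bar B$ over the maximal ideals of $\bar B$ lying above $\mf m$ identifies the modules $\bar M, \bar N$ from Lemma \ref{lem:infty-gld-2} as direct summands of $M \otimes_\kk \bar\kk$ and $N \otimes_\kk \bar\kk$. The non-vanishing $\Tor^{\bar W}_4(\bar M, \bar N) \neq 0$ thus forces the right-hand side to be nonzero, and faithful flatness of $\bar\kk/\kk$ then gives $\Tor^{W_{(2)}}_4(M,N) \neq 0$.

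Finally, the alternating property of $\Tot \mc P_{\sbu\sbu}$ in $[3,+\infty)$ from Proposition \ref{prop:free-resol-GWA} propagates this non-vanishing to $\Tor^{W_{(2)}}_{2i}(M,N) \neq 0$ for every $i \geq 2$, yielding $\gldim W_{(2)} = \infty$; non-homological-smoothness is automatic since homologically smooth algebras have finite global dimension. The principal technical obstacle I anticipate is the Chinese-remainder step: it is clean when $B/\mf m$ is separable over $\kk$ (in particular in characteristic zero, or more generally over a perfect field), but in the inseparable setting $\bar B/\mf m \bar B$ acquires nilpotents and the clean identification of $\bar M, \bar N$ as direct summands is no longer available. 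A more robust fallback sufficient for Theorem \ref{thm:main-theorem} is to deduce non-homological-smoothness of $W_{(2)}$ directly from that of $\bar W$ by tensoring a hypothetical finite projective bimodule resolution with $\bar\kk$, sidestepping the subtler global-dimension assertion.
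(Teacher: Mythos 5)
Your strategy is the same as the paper's: settle the algebraically closed case by Lemmas \ref{lem:infty-gld-1} and \ref{lem:infty-gld-2}, then descend along $\kk\subseteq\bar\kk$ using flat base change for $\Tor$ and the alternation of $\Tot\mc P_{\sbu\sbu}$. Where you differ is the descent step, and there your version is more careful than the paper's. The paper only cites the isomorphism $\mbb K\otimes\Tor_*^{W_{(2)}}(M,N)\cong\Tor_*^{W_{(2)}^{\mbb K}}(M^{\mbb K},N^{\mbb K})$ and asserts the existence of witnesses over $W_{(2)}$; as stated, that isomorphism transports non-vanishing only upwards, from $\kk$-defined modules to their extensions, so one really does need a device like yours: choose a maximal ideal $\mf m\subset B$ containing $(\vphi,\vphi_1,\vphi_2)$, form $M=W_{(2)}/(yW_{(2)}+\mf m W_{(2)})$ and $N=W_{(2)}/(W_{(2)}x+W_{(2)}\mf m)$, and recognize the witnesses of Lemma \ref{lem:infty-gld-2} as direct summands of $M\otimes_\kk\bar\kk$, $N\otimes_\kk\bar\kk$. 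Your Chinese-remainder identification is correct whenever $B/\mf m$ is separable over $\kk$ (in particular whenever $\kk$ is perfect), and the propagation $\Tor_4=\Tor_6=\cdots$ via Proposition \ref{prop:free-resol-GWA} matches the paper.

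The inseparable caveat you raise is genuine, and it is not a defect of your argument alone: the same issue sits, unacknowledged, in the paper's one-line transfer, and in fact the global-dimension assertion itself breaks down there. Take $\kk=\mbb F_p(t)$, $\si=\id$, $\vphi=z_1^p-t$; then $\vphi_1=\vphi_2=0$, so $(\vphi,\vphi_1,\vphi_2)=(\vphi)\neq B$, yet $W_{(2)}\cong\kk[z_1,z_2,x,y]/(xy-z_1^p+t)$ is regular at every maximal ideal (the relation forces $z_1^p-t=xy\in\mf M^2$, and $z_1^p-t$ is always part of a regular system of parameters of $B_{\mf m}$ because $B/(z_1^p-t)\cong\kk(t^{1/p})[z_2]$ is regular), so its global dimension is $3$, not infinite; only the non-smoothness conclusion survives, because $W_{(2)}\otimes_\kk\bar\kk$ is singular. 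So no refinement of the summand argument can close the gap for the statement as written, and your fallback is exactly the right repair for what Theorem \ref{thm:main-theorem} actually needs: a finite resolution of $W_{(2)}$ by finitely generated projective bimodules would base-change to one for $W_{(2)}\otimes_\kk\bar\kk$, forcing finite global dimension there and contradicting Lemma \ref{lem:infty-gld-2}. In short: your proof is complete over perfect fields, is more explicit than the paper's at the only delicate point, and your concluding remark correctly isolates where the global-dimension claim (as opposed to the smoothness claim) cannot be saved.
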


\begin{proof}
	By Lemmas \ref{lem:infty-gld-1}, \ref{lem:infty-gld-2}, the proposition is true when $\kk$ is algebraically closed.
	
	For the general case, let $\mbb K$ be the algebraic closure of $\kk$ and $W_{(2)}^{\mbb K}=\mbb K\ot W_{(2)}$. Note that $W_{(2)}^{\mbb K}$ is a GWA over $B^{\mbb K}=\mbb K[z_1,z_2]$ and the ideal $(\vphi,\vphi_1,\vphi_2)$ in $B^{\mbb{K}}$ is proper. According to our argument above, the $\mbb K$-algebra $W_{(2)}^{\mbb K}$ has infinite global dimension. Since the $\Tor$ groups respect localization, i.e., $\mbb{K}\otimes\Tor_*^{W_{(2)}}(M, N)\cong \Tor_*^{W_{(2)}^{\mbb K}}(M^{\mbb K}, N^{\mbb K})$ for all $M$, $N$, there exist $M_n$ and $N_n$ such that $\Tor_n^{W_{(2)}}(M_n, N_n)\neq 0$ for all $n\in\nan$. Therefore we finish the proof in the general case.
\end{proof}

\section{Nakayama automorphisms}\label{sec:Nak-auto}

In \cite{Van-den-Bergh:VdB-duality}, Van den Bergh proved the existence of a duality between Hochschild homology and cohomology for a class of Gorenstein algebras $A$ under the homological smoothness condition. Namely, there exists an invertible $A$-bimodule $U$ and a positive integer $d$ such that
\[
H^i(A,M)\cong H_{d-i}(A,U\ot_A M)
\]
naturally holds for all $A^e$-modules $M$ and all integers $i$.  In particular, if the invertible bimodule $U$ is of the form $A^\nu$ for some $\nu\in\Aut(A)$, then the duality becomes
\begin{equation}\label{eq:twisted-Poincare}
H^i(A,M)\cong H_{d-i}(A, M^\nu).
\end{equation}
This is usually called twisted Poincar\'e duality in the literature. Recall that an algebra $A$ enjoys twisted Poincar\'e duality if $A$ is a twisted $d$-Calabi-Yau algebra, namely, $A$ is homologically smooth and the condition
\begin{equation}\label{eq:NAK}
\Ext_{A^e}^i(A, A^e)\cong\begin{cases}
0, & i\neq d, \\ A^\nu, & i=d
\end{cases}
\end{equation}
is fulfilled for some $d\in\nan$ and $\nu\in\Aut(A)$. The number $d$ is called the Hochschild cohomology dimension of $A$ and respectively $\nu$ the Nakayama automorphism of $A$, which coincide the $d$, $\nu$ in \eqref{eq:twisted-Poincare}. 

Many classes of algebras arising from noncommutative algebraic geometry or quantum group are twisted Calabi-Yau. We refer to \cite{Bocklandt-Schedler-Wemyss:superpotential}, \cite{Brown-Zhang:AS-Gorenstein-Hopf}, \cite{Chan-Walton-Zhang:NAK}, \cite{Ginzburg:CY-alg}, \cite{Krahmer:qh-space}, \cite{L-Wu:rdc-qhs}, \cite{VdB:CY-alg}, \cite{Yekutieli:rigid-dualizing-complex-universal-enveloping-algebra} and the references therein for more information and in particular plenty of examples.

\begin{remark}
	\begin{enumerate}
		\item In \eqref{eq:NAK}, the second variable $A^e$ in the Ext group has left and right $A^e$-module structures. The left is used for computing Ext, and the right is survival, inducing the $A$-bimodule structure on the Ext group.
		\item An algebra $A$ is said to have a Nakayama automorphism $\nu$ if \eqref{eq:NAK} is satisfied, even if $A$ is not homologically smooth. The  automorphism $\nu$ is unique up to inner automorphism. In \cite{Lv-Mao-Zhang:NAK}, $\nu$ is proven to be central in $\Aut(A)/\mathrm{Inn}(A)$.
		\item When $\nu$ is inner, the algebra $A$ is Calabi-Yau in the sense of Ginzburg \cite{Ginzburg:CY-alg}.
	\end{enumerate}
\end{remark}

Let us focus on Nakayama automorphisms of GWAs. Recall that a GWA $W_{(1)}$  is proven to have a Nakayama automorphism (with $d=2$) whose explicit expression is determined in \cite{L:gwa-def}. By adapting the proof in loc.\ cit.\ one can conclude that $W_{(2)}$ also has a Nakayama automorphism (with $d=3$). However, one does not know the expressions of $\si(z_1)$, $\si(z_2)$ for an arbitrary $\si\in\Aut(B)$, as is mentioned in Remark \ref{rmk:two-smoothness}. Due to the indeterminacy of $\si$, we are not able to capture the Nakayama automorphism $\nu$ as we did in \cite{L:gwa-def}. Instead, we will deduce the expression of $\nu$ by spectral sequence argument.

It is illustrated in \cite{L:gwa-def} that the filtration by column of a homotopy double complex gives rise to a spectral sequence. If the homotopy double complex sits in the first quadrant, then the spectral sequence converges to the (co)homology of the associated total complex. So let us apply it to the homotopy double complex $\mc{Q}_{W^e}^{\sbu\sbu}$. Denote by $E^{pq}_{\sbu}$ the induced spectral sequence.

Since $W_{(2)}\cong B^{(\nan)}$ as a left $B$-module, we have $W_{(2)}^e\cong (B^e)^{(\nan\times\nan)}\cong (B^e)^{(\nan)}$ as a left $B^e$-module. Observe that $B$ is 2-Calabi-Yau and so
\[
E_1^{0q}=H^q(\mc{Q}_{W_{(2)}^e}^{0\sbu})=\Ext^q_{W_{(2)}^e}(W_{(2)}^e\ot_{B^e}B, W_{(2)}^e)\cong\Ext^q_{B^e}(B, B^e)^{(\nan)}
\]
is zero unless $q=2$. By a similar manner, we conclude that for all $p\geq 1$, $E^{pq}_1$ is nonzero only if $q= 2$.

Notice that $E^{02}_1$ is a quotient of $E^{02}_0=W_{(2)}^e$. It is easy to prove that
\[
E^{02}_1=\Bigl\{\sum [w'\ot w'']\Bigm| \text{$w'\in W_{(2)}$ and $w''$ is a power of $x$ or $y$}\Bigr\}
\]
as a $\kk$-module, and similarly, we have $E^{p2}_1=E^{02}_1\oplus E^{02}_1$ for all $p\geq 1$. Hence, the differentials $E^{02}_1\xrightarrow{d^0}E^{12}_1\xrightarrow{d^1}E^{2}_1$ are respectively given by
\[
d^0=\begin{pmatrix}
-x\ot 1+J\ot x \\ 1\ot y-Jy\ot 1
\end{pmatrix}
\qquad\text{and}\qquad
d^1=\begin{pmatrix}
y\ot 1 & 1\ot x \\ 1\ot y & x\ot 1
\end{pmatrix}.
\]
Here we remind the reader that $J$ is the Jacobian determinant of $\si$ and that $d^0$ is induced by $d^h_{02}$ given in \S\ref{subsec:constr-homo-double}, and their matrix representations are in fact equal up to $\im d_v^{02}$.

We begin to compute $E_2^{12}$. Suppose $\mbf{x}:=(\mbf{x}_1,\mbf{x}_2)^T\in\ker d^2$ with
\begin{align*}
\mbf{x}_1&=\sum_{i,j\geq 1}[a^1_{ij}x^i\ot x^j]+\sum_{i\geq 1,j\geq 0}[a^2_{ij}x^i\ot y^j]+\sum_{i\geq 0,j\geq 1}[a^3_{ij}y^i\ot x^j]+\sum_{i,j\geq 0}[a^4_{ij}y^i\ot y^j], \\ 
\mbf{x}_2&=\sum_{i,j\geq 0}[b^1_{ij}x^i\ot x^j]+\sum_{i\geq 0,j\geq 1}[b^2_{ij}x^i\ot y^j]+\sum_{i\geq 1,j\geq 0}[b^3_{ij}y^i\ot x^j]+\sum_{i,j\geq 1}[b^4_{ij}y^i\ot y^j].
\end{align*}
Then by a direct computation, we have
\begin{gather*}
b^1_{ij}=-\vphi\si^{-1}(a^1_{i+1,j+1}),\; b^2_{ij}=-\si^{-1}(a^2_{i+1,j-1}),\\
b^3_{ij}=-\si^{-1}(a^3_{i-1,j+1}),\; a^4_{ij}=-\si(\vphi)\si(b^4_{i+1,j+1})
\end{gather*}
and hence
\begin{align*}
\mbf{x}&=
\sum_{i\geq 1,j\geq 1}\begin{pmatrix}
[a^1_{ij}x^i\ot x^j] \\ [-\vphi\si^{-1}(a^1_{ij})x^{i-1}\ot x^{j-1}]
\end{pmatrix}+\sum_{i\geq 1,j\geq 0}\begin{pmatrix}[a^2_{ij}x^i\ot y^j] \\ [-\si^{-1}(a^2_{ij})x^{i-1}\ot y^{j+1}] \end{pmatrix}\\
&\varphantom{=}{}+\sum_{i\geq 0,j\geq 1}\begin{pmatrix}[a^3_{ij}y^i\ot x^j]\\ [-\si^{-1}(a^3_{ij})y^{i+1}\ot x^{j-1}]\end{pmatrix}+\sum_{i\geq 0,j\geq 0}\begin{pmatrix}[-\si(\vphi)\si(\tilde{b}^4_{ij})y^i\ot y^j] \\ [\tilde{b}^4_{ij}y^{i+1}\ot y^{j+1}] \end{pmatrix}
\end{align*}
where $\tilde{b}^4_{ij}=b^4_{i+1,j+1}$. On the other hand, it is routine to check
\begin{align*}
\begin{pmatrix}
[a^1_{ij}x^i\ot x^j] \\ [-\vphi\si^{-1}(a^1_{ij})x^{i-1}\ot x^{j-1}]
\end{pmatrix}+\im d^0&=
\begin{pmatrix}
[J^{-1}\si(a^1_{ij})x^{i+1}\ot x^{j-1}] \\ [-J^{-1}\vphi a^1_{ij}x^{i}\ot x^{j-2}]
\end{pmatrix}+\im d^0,\\
\begin{pmatrix}[a^2_{ij}x^i\ot y^j] \\ [-\si^{-1}(a^2_{ij})x^{i-1}\ot y^{j+1}] \end{pmatrix}+\im d^0&=
\begin{pmatrix}[J\si(\vphi)\si^{-1}(a^2_{ij})x^{i-1}\ot y^{j-1}] \\ [-J\vphi\si^{-2}(a^2_{ij})x^{i-2}\ot y^{j}] \end{pmatrix}+\im d^0,\\
\begin{pmatrix}[a^3_{ij}y^i\ot x^j]\\ [-\si^{-1}(a^3_{ij})y^{i+1}\ot x^{j-1}]\end{pmatrix}+\im d^0&=\begin{pmatrix}[J^{-1}\si(\vphi)\si(a^3_{ij})y^{i-1}\ot x^{j-1}]\\ [-J^{-1}\vphi a^3_{ij}y^{i}\ot x^{j-2}]\end{pmatrix}+\im d^0,\\
\begin{pmatrix}[-\si(\vphi)\si(\tilde{b}^4_{ij})y^i\ot y^j] \\ [\tilde{b}^4_{ij}y^{i+1}\ot y^{j+1}] \end{pmatrix}+\im d^0&=
\begin{pmatrix}[-J\si(\vphi)\tilde{b}^4_{ij}y^{i+1}\ot y^{j-1}] \\ [J\si^{-1}(\tilde{b}^4_{ij})y^{i+2}\ot y^{j}] \end{pmatrix}+\im d^0.
\end{align*}
Henceforth, $\mbf{x}+\im d^0$ can be uniquely expressed as $\mbf{y}+\im d^0$ with
\begin{equation}\label{eq:Phi-y}
\mbf{y}=
\sum_{i\geq 1}\begin{pmatrix}[c^1_{i}x^i\ot 1] \\ [-\si^{-1}(c^1_{i})x^{i-1}\ot y] \end{pmatrix}+\sum_{i\geq 1}\begin{pmatrix}[c^2_{i}y^i\ot x] \\ [-\si^{-1}(c^2_{i})y^{i+1}\ot 1] \end{pmatrix}.
\end{equation}

Since $\Ext_{W_{(2)}^e}^3(W_{(2)},W_{(2)}^e)\cong E_2^{12}$, we have to study the $W_{(2)}$-bimodule structure on $E_2^{12}$ which is induced by the right regular module structure on $W_{(2)}^e$.\footnote{This is also called the inner bimodule structure on $W_{(2)}\ot W_{(2)}$.} Let us write an isomorphism $\Phi\colon E_2^{12}\to W_{(2)}$ of $\kk$-module explicitly as follows: for any class $\mbf{\bar{y}}:=\mbf{y}+\im d^0\in E_2^{12}$ as above, define
\[
\Phi(\mbf{\bar{y}})=\sum_{i\geq 1}J^{i}\si^{-1}(c^1_{i})x^{i-1}+\sum_{i\geq 1}J^{-i}c^2_{i}y^{i}.
\]

\begin{lemma}
Let $J$ be the Jacobian determinant of $\si$, and $\nu\colon W_{(2)}\to W_{(2)}$ be the automorphism defined by
\begin{equation}\label{eq:NAK-auto-formula}
\nu(x)=Jx,\quad \nu(y)=J^{-1}y,\quad \nu(z_1)=z_1,\quad \nu(z_2)=z_2.
\end{equation}
Then $\Phi\colon E^{12}_2\to W_{(2)}^\nu$ is an isomorphism of $W_{(2)}$-bimodules.
\end{lemma}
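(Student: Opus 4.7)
The plan is to prove the lemma in three stages: $\Phi$ is a $\kk$-linear bijection, $\Phi$ is left $W_{(2)}$-linear, and $\Phi$ intertwines the right $W_{(2)}$-action on $E_2^{12}$ with the $\nu$-twisted right action on $W_{(2)}^\nu$. The first stage is the direct content of the analysis immediately preceding the lemma: every class in $E_2^{12}$ has a unique representative $\mbf{y}$ of the shape \eqref{eq:Phi-y}, so the definition of $\Phi$ is manifestly a $\kk$-linear isomorphism onto $W_{(2)}$.

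Under the right-regular $W_{(2)}^e$-action on $W_{(2)}^e$, the $W_{(2)}$-bimodule structure inherited by $E_2^{12}$ is given on representatives by $w\cdot[a\otimes b]=[a\otimes wb]$ and $[a\otimes b]\cdot w=[aw\otimes b]$. A crucial simplification is that, for any $\si\in\Aut(\kk[z_1,z_2])$, the Jacobian $J$ is a nonzero constant in $\kk$; hence it is central in $W_{(2)}$ and fixed by every power of $\si$. By $\kk$-linearity, it suffices to verify the compatibilities $\Phi(w\cdot\mbf{\bar y})=w\,\Phi(\mbf{\bar y})$ and $\Phi(\mbf{\bar y}\cdot w)=\Phi(\mbf{\bar y})\,\nu(w)$ on the algebra generators $w\in\{x,y,z_1,z_2\}$. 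The checks for $w=z_1,z_2$ are routine: one uses the Koszul-type relations $[a\otimes bz_i]\equiv[z_ia\otimes b]$ in $E_1^{02}$ coming from $\im d_v^{01}$ together with the GWA commutations $xz_i=\si(z_i)x$ and $yz_i=\si^{-1}(z_i)y$, and matches the outcome with $z_i\,\Phi(\mbf{\bar y})=\Phi(\mbf{\bar y})\,z_i=\Phi(\mbf{\bar y})\,\nu(z_i)$ since $\nu|_B=\id$.

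The substantive step is for $w=x$ and $w=y$. Right multiplication by $x$ sends each normal-form summand to another of the same type with index shifted by one, and the identity $\Phi(\mbf{\bar y}\cdot x)=\Phi(\mbf{\bar y})\,Jx$ is then immediate from the centrality of $J$. Right multiplication by $y$ is more delicate: applied to a first-type summand $([cx^i\otimes 1],[-\si^{-1}(c)x^{i-1}\otimes y])$ with $i\geq 2$, the result is again in first-type normal form with index $i-1$ via the relation $x^iy=\si^i(\vphi)x^{i-1}$; for the boundary case $i=1$, however, $\mbf y\cdot y$ lands outside normal form and must be reduced by subtracting an explicit element of $\im d^0$ (concretely, $d^0([-\si^{-1}(c)y\otimes 1])$), after which it occupies the second-type normal form with coefficient $c^2_1=J\si^{-1}(c)$. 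In both cases a short calculation confirms $\Phi(\mbf{\bar y}\cdot y)=\Phi(\mbf{\bar y})\,J^{-1}y=\Phi(\mbf{\bar y})\,\nu(y)$. The second-type summands and the left multiplications by $x,y$ are handled symmetrically, with the left side producing no Jacobian twist.

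The main technical obstacle is precisely this $d^0$-boundary reduction at index $1$ and its counterpart for the second type. I would streamline the verification by tabulating $d^0$ on each representative monomial type once and for all, so that the normalization of every right- or left-multiplied summand becomes a closed-form expression in $J^{\pm 1}$; each of the eight generator-side checks then reduces to a single identity in $B$, which holds thanks to the centrality of $J$ together with the definitions of $\si$ and $\nu$.
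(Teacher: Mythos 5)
Your proposal is correct and follows essentially the same route as the paper: one verifies the module compatibilities generator by generator ($z_1,z_2,x,y$) on the normal-form representatives \eqref{eq:Phi-y}, reducing the boundary cases (e.g.\ the index-one term under right multiplication by $y$, via $x y=\si(\vphi)$) modulo $\im d^0$ exactly as in the paper's computation, with the left-module check being the easier untwisted half. Your explicit remark that $J\in\kk^{\times}$ (hence central and $\si$-invariant) is a fact the paper uses implicitly, so it is a welcome clarification rather than a deviation.
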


\begin{proof}
	Let us prove that $\Phi$ is an isomorphism of right $W_{(2)}$-modules. The left modules case is similar.
	
	Any class in $E_2^{12}$ can be represented by some $\mbf{y}$ as in \eqref{eq:Phi-y}. So we need to verify $\Phi(\mbf{\bar{y}}\triangleleft w)=\Phi(\mbf{\bar{y}})\triangleleft w$ for $w\in\{z_1,z_2,x,y\}$.
	
	Let $z=z_1$ or $z_2$. We have
	\begin{align*}
	\Phi(\mbf{\bar{y}}\triangleleft z)&=\Phi\!\left(\sum_{i\geq 1}\begin{pmatrix}[c^1_{i}x^iz\ot 1] \\ [-\si^{-1}(c^1_{i})x^{i-1}z\ot y] \end{pmatrix}+\sum_{i\geq 1}\begin{pmatrix}[c^2_{i}y^iz\ot x] \\ [-\si^{-1}(c^2_{i})y^{i+1}z\ot 1] \end{pmatrix}\!\right)\\
	&=\Phi\!\left(\sum_{i\geq 1}\begin{pmatrix}[c^1_{i}\si^i(z)x^i\ot 1] \\ [-\si^{-1}(c^1_{i}\si^i(z))x^{i-1}\ot y] \end{pmatrix}\!+\!\sum_{i\geq 1}\begin{pmatrix}[c^2_{i}\si^{-i}(z)y^i\ot x] \\ [-\si^{-1}(c^2_{i}\si^{-i}(z))y^{i+1}\ot 1] \end{pmatrix}\!\right)\\
	&=\sum_{i\geq 1}J^{i}\si^{-1}(c^1_{i}\si^i(z))x^{i-1}+\sum_{i\geq 1}J^{-i}c^2_{i}\si^{-i}(z)y^{i}\\
	&=\sum_{i\geq 1}J^{i}\si^{-1}(c^1_{i})x^{i-1}z+\sum_{i\geq 1}J^{-i}c^2_{i}y^{i}z\\
	&=\Phi(\mbf{\bar{y}})\triangleleft z.
	\end{align*}
	Also, we have
	\begin{align*}
	\Phi(\mbf{\bar{y}}\triangleleft x)&=\Phi\left(\sum_{i\geq 1}\begin{pmatrix}[c^1_{i}x^{i+1}\ot 1] \\ [-\si^{-1}(c^1_{i})x^{i}\ot y] \end{pmatrix}+\sum_{i\geq 1}\begin{pmatrix}[c^2_{i}y^ix\ot x] \\ [-\si^{-1}(c^2_{i})y^{i+1}x\ot 1] \end{pmatrix}\right)\\
	&=\Phi\left(\sum_{i\geq 2}\begin{pmatrix}[c^1_{i-1}x^{i}\ot 1] \\ [-\si^{-1}(c^1_{i-1})x^{i-1}\ot y] \end{pmatrix}+\sum_{i\geq 1}\begin{pmatrix}[c^2_{i}\si^{1-i}(\vphi)y^{i-1}\ot x] \\ [-\si^{-1}(c^2_{i}\si^{1-i}(\vphi))y^{i}\ot 1] \end{pmatrix}\right)\\
	&=\Phi\left(\sum_{i\geq 2}\begin{pmatrix}[c^1_{i-1}x^{i}\ot 1] \\ [-\si^{-1}(c^1_{i-1})x^{i-1}\ot y] \end{pmatrix}+\begin{pmatrix}[c^2_{1}\vphi \ot x] \\ [-\si^{-1}(c^2_{1}\vphi)y\ot 1] \end{pmatrix}\right.\\
	&\varphantom{=}{}+\left.\sum_{i\geq 1}\begin{pmatrix}[c^2_{i+1}\si^{-i}(\vphi)y^{i}\ot x] \\ [-\si^{-1}(c^2_{i+1}\si^{-i}(\vphi))y^{i+1}\ot 1] \end{pmatrix}\right)\\
	&=\Phi\left(\sum_{i\geq 2}\begin{pmatrix}[c^1_{i-1}x^{i}\ot 1] \\ [-\si^{-1}(c^1_{i-1})x^{i-1}\ot y] \end{pmatrix}+\begin{pmatrix}[J^{-1}\si(c^2_{1}\vphi)x \ot 1] \\ [-J^{-1}c^2_{1}\vphi\ot y] \end{pmatrix}\right.\\
	&\varphantom{=}{}+\left.\sum_{i\geq 1}\begin{pmatrix}[c^2_{i+1}\si^{-i}(\vphi)y^{i}\ot x] \\ [-\si^{-1}(c^2_{i+1}\si^{-i}(\vphi))y^{i+1}\ot 1] \end{pmatrix}\right)\\
	&=\sum_{i\geq 2}J^{i}\si^{-1}(c^1_{i-1})x^{i-1}+J\si^{-1}(J^{-1}\si(c^2_{1}\vphi))+\sum_{i\geq 1}J^{-i}c^2_{i+1}\si^{-i}(\vphi)y^{i}\\
	&=\sum_{i\geq 1}J^{i+1}\si^{-1}(c^1_{i})x^{i}+c^2_{1}\vphi+\sum_{i\geq 1}J^{-i}c^2_{i+1}y^{i}\vphi\\
	&=\sum_{i\geq 1}J^{i}\si^{-1}(c^1_{i})x^{i-1}Jx+\sum_{i\geq 1}J^{-i}c^2_{i}y^{i}Jx\\
	&=\Phi(\mbf{\bar{y}})\triangleleft  x,
	\end{align*}
	and
	\begin{align*}
	\Phi(\mbf{\bar{y}}\triangleleft y)&=\Phi\left(\sum_{i\geq 1}\begin{pmatrix}[c^1_{i}x^{i}y\ot 1] \\ [-\si^{-1}(c^1_{i})x^{i-1}y\ot y] \end{pmatrix}+\sum_{i\geq 1}\begin{pmatrix}[c^2_{i}y^{i+1}\ot x] \\ [-\si^{-1}(c^2_{i})y^{i+2}\ot 1] \end{pmatrix}\right)\\
	&=\Phi\left(\sum_{i\geq 0}\begin{pmatrix}[c^1_{i+1}x^{i+1}y\ot 1] \\ [-\si^{-1}(c^1_{i+1})x^{i}y\ot y] \end{pmatrix}+\sum_{i\geq 2}\begin{pmatrix}[c^2_{i-1}y^{i}\ot x] \\ [-\si^{-1}(c^2_{i-1})y^{i+1}\ot 1] \end{pmatrix}\right)\\
	&=\Phi\left(\sum_{i\geq 1}\begin{pmatrix}[c^1_{i+1}x^{i+1}y\ot 1] \\ [-\si^{-1}(c^1_{i+1})x^{i}y\ot y] \end{pmatrix}+\begin{pmatrix}[c^1_{1}xy\ot 1] \\ [-\si^{-1}(c^1_{1})y\ot y] \end{pmatrix}\right.\\
	&\varphantom{=}{}+\left.\sum_{i\geq 2}\begin{pmatrix}[c^2_{i-1}y^{i}\ot x] \\ [-\si^{-1}(c^2_{i-1})y^{i+1}\ot 1] \end{pmatrix}\right)\\
	&=\Phi\left(\sum_{i\geq 1}\begin{pmatrix}[c^1_{i+1}\si^{i+1}(\vphi)x^{i}\ot 1] \\ [-\si^{-1}(c^1_{i+1}\si^{i+1}(\vphi))x^{i-1}\ot y] \end{pmatrix}+\begin{pmatrix}[J\si^{-1}(c^1_{1})y\ot x] \\ [-Jc^1_{1}y^2\ot 1] \end{pmatrix}\right.\\
	&\varphantom{=}{}+\left.\sum_{i\geq 2}\begin{pmatrix}[c^2_{i-1}y^{i}\ot x] \\ [-\si^{-1}(c^2_{i-1})y^{i+1}\ot 1] \end{pmatrix}\right)\\
	&=\sum_{i\geq 1}J^{i}\si^{-1}(c^1_{i+1}\si^{i+1}(\vphi))x^{i-1}+J^{-1}J\si^{-1}(c^1_{1})y+\sum_{i\geq 2}J^{-i}c^2_{i-1}y^{i}\\
	&=\sum_{i\geq 1}J^{i}\si^{-1}(c^1_{i+1})x^{i-1}\si(\vphi)+\si^{-1}(c^1_{1})y+\sum_{i\geq 2}J^{-i}c^2_{i-1}y^{i}\\
	&=\sum_{i\geq 0}J^{i}\si^{-1}(c^1_{i+1})x^{i}y+\sum_{i\geq 1}J^{-i-1}c^2_{i}y^{i+1}\\
	&=\sum_{i\geq 1}J^{i}\si^{-1}(c^1_{i})x^{i-1}J^{-1}y+\sum_{i\geq 1}J^{-i}c^2_{i}y^{i}J^{-1}y\\
	&=\Phi(\mbf{\bar{y}})\triangleleft  y. \qedhere
	\end{align*}
\end{proof}

Thus we obtain

\begin{theorem}\label{thm:3-tCY}
	Any GWA $W_{(2)}$ has a Nakayama automorphism $\nu$  given by \eqref{eq:NAK-auto-formula}, namely,
	\[
	\Ext_{W_{(2)}^e}^i(W_{(2)}, W_{(2)}^e)\cong\begin{cases}
		0, & i\neq 3, \\ W_{(2)}^\nu, & i=3.
	\end{cases}
	\]
	In particular, if $W_{(2)}$ is  homologically smooth, then $W_{(2)}$ is twisted $3$-Calabi-Yau.
\end{theorem}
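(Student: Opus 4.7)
The proof will proceed via the spectral sequence analysis set up in the paragraphs immediately preceding the theorem. The plan is to use the free resolution $\Tot \mc P_{\sbu\sbu}$ from Proposition \ref{prop:free-resol-GWA} to compute $\Ext^\sbu_{W_{(2)}^e}(W_{(2)}, W_{(2)}^e)$ via the homotopy double complex $\mc Q_{W_{(2)}^e}^{\sbu\sbu} = \Hom_{W_{(2)}^e}(\mc P_{\sbu\sbu}, W_{(2)}^e)$ and its column-filtration spectral sequence $E_r^{pq}$.

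First I would establish that the $E_1$-page is concentrated on the single row $q = 2$. Since $W_{(2)}^e \cong (B^e)^{(\nan)}$ as a $B^e$-module, and each column of $\mc P_{\sbu\sbu}$ is a Koszul resolution of $B$ up to the twists by $\si$ recorded in \S\ref{subsec:constr-homo-double}, each column of $\mc Q^{\sbu\sbu}_{W^e}$ computes copies of $\Ext^\sbu_{B^e}(B,B^e)$, possibly $\si$-twisted. Because $B=\kk[z_1,z_2]$ is $2$-Calabi-Yau, these Ext groups vanish for $q\neq 2$. The spectral sequence therefore collapses at $E_2$ and
\[
\Ext^i_{W_{(2)}^e}(W_{(2)}, W_{(2)}^e) \cong E_2^{i-2,\,2}.
\]
Next I would extract the induced horizontal differentials $d^r\colon E_1^{r,2}\to E_1^{r+1,2}$ from $d^h_{r,2}$ modulo $\im d_v^{r,2}$; the key simplification is that the noncommutative Jacobian $J_\nc$ in $d^h_{02}$ collapses to the ordinary Jacobian $J$ in the quotient, which is exactly what produces the $J$ in $\nu$. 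From the explicit formula for $d^0$, injectivity is immediate, giving $\Ext^2=0$. The alternating nature of $\Tot\mc P_{\sbu\sbu}$ from Proposition \ref{prop:free-resol-GWA} makes the row $E_1^{\sbu,2}$ eventually $2$-periodic for $p\geq 1$, and a verification parallel to the $p=1$ case below shows $E_2^{p,2}=0$ for all $p\geq 2$, hence $\Ext^i=0$ for all $i\geq 4$.

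The heart of the proof is the computation of $E_2^{1,2}$ foreshadowed in the paragraphs preceding the theorem. Given a general cocycle $\mbf x\in\ker d^1$, solving the component equations forces the coefficients $b^{\bullet}_{ij}$ to be expressible in terms of the $a^{\bullet}_{ij}$; then, using $\im d^0$, one normalizes every class to the canonical form $\mbf y$ of \eqref{eq:Phi-y}, supported only on pure powers of $x$ and pure powers of $y$. The map $\Phi$ defined immediately after \eqref{eq:Phi-y} is a $\kk$-linear isomorphism $E_2^{1,2}\to W_{(2)}$, and the preceding lemma verifies that $\Phi$ intertwines the right (and, symmetrically, the left) $W_{(2)}$-action with the action twisted by $\nu$ from \eqref{eq:NAK-auto-formula}; this yields $\Ext^3\cong W_{(2)}^\nu$. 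The main obstacle is precisely this last bookkeeping step: right multiplication by $x$ or $y$ on a representative in canonical form produces a representative no longer in that form, and rewriting it modulo $\im d^0$ is what introduces the powers of $J$ (respectively $J^{-1}$) constituting the Nakayama twist, whereas the actions of $z_1$ and $z_2$ pass through untwisted. The concluding assertion that homological smoothness upgrades this to twisted $3$-Calabi-Yau is then immediate from the definition.
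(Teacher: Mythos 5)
Your proposal follows essentially the same route as the paper: the column-filtration spectral sequence of the homotopy double complex $\mc Q_{W_{(2)}^e}^{\sbu\sbu}$, collapse of $E_1$ to the row $q=2$ via the $2$-Calabi-Yau property of $B=\kk[z_1,z_2]$ (with $J_\nc$ descending to $J$ modulo $\im d_v^{02}$), normalization of cocycles modulo $\im d^0$ to the canonical form \eqref{eq:Phi-y}, and the bimodule isomorphism $\Phi\colon E_2^{12}\to W_{(2)}^\nu$ producing the Nakayama twist by powers of $J$. The only difference is presentational: you spell out the vanishing of $E_2^{p2}$ for $p\neq 1$ (hence $\Ext^i=0$ for $i\neq 3$) directly, where the paper obtains the existence of a Nakayama automorphism with $d=3$ by adapting the argument of \cite{L:gwa-def} and devotes its computation to identifying $\nu$; this is a fleshing-out of the same outline, not a different method.
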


\begin{remark}
Since $W_{(2)}$ is noetherian and has (left and right) injective dimension $3$, an equivalent statement of Theorem \ref{thm:3-tCY} is: the rigid dualizing complex over $W_{(2)}$ is ${}^\nu W_{(2)}[3]$.
\end{remark}

\begin{corollary}\label{cor:3-CY}
	A GWA $W_{(2)}=B(\si,\vphi)$ is $3$-Calabi-Yau if and only if $(\vphi,\vphi_1,\vphi_2)=B$ and  $J=1$.
\end{corollary}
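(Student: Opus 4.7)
The corollary follows by combining Theorems \ref{thm:main-theorem} and \ref{thm:3-tCY}. Being $3$-Calabi-Yau means being homologically smooth and having a trivial Nakayama automorphism (i.e.\ $W_{(2)}^{\nu}\cong W_{(2)}$ as bimodules). Theorem \ref{thm:main-theorem} identifies homological smoothness with the condition $(\vphi,\vphi_1,\vphi_2)=B$, so this directly accounts for one half of the claimed criterion. Theorem \ref{thm:3-tCY} provides the explicit formula $\nu(x)=Jx$, $\nu(y)=J^{-1}y$, $\nu(z_i)=z_i$, from which $\nu=\id$ is plainly equivalent to $J=1$. Assembling these two equivalences yields the corollary.

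The only subtle point is to verify that ``$\nu$ is inner'' forces ``$\nu=\id$'' here, rather than being strictly weaker. I would argue as follows. Since $\si\in\Aut(\kk[z_1,z_2])$, applying the chain rule to $\si\circ\si^{-1}=\id$ shows that $J\cdot\si(J')=1$ for the Jacobian $J'$ of $\si^{-1}$, and since both Jacobians are polynomials, this forces $J\in\kk^\times$. Next, whenever $\vphi\neq 0$ (which is automatic under homological smoothness, since $(0,0,0)=(0)\neq B$), the GWA $W_{(2)}$ is a noetherian domain, and the $\inn$-grading with $|x|=1$, $|y|=-1$, $|z_i|=0$ together with the domain property forces every unit of $W_{(2)}$ to lie in $\kk^\times$. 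Conjugation by a scalar is trivial, so no non-identity automorphism of the form $x\mapsto Jx$, $y\mapsto J^{-1}y$ with $J\in\kk^\times\setminus\{1\}$ can be inner.

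The main (mild) obstacle in executing the plan is this last step: carefully verifying that the unit group of $W_{(2)}$ reduces to $\kk^\times$, which can be extracted from standard structural results on GWAs using the $\inn$-grading. Everything else is a direct assembly of the two main theorems of the paper.
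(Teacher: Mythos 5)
Your overall assembly is exactly the intended one: the paper offers no separate argument for this corollary, treating it as the immediate combination of Theorem \ref{thm:main-theorem} (smoothness $\Leftrightarrow(\vphi,\vphi_1,\vphi_2)=B$) and Theorem \ref{thm:3-tCY} (the formula for $\nu$), with ``Calabi--Yau'' meaning twisted Calabi--Yau with inner Nakayama automorphism. You are also right that the one point needing care is why ``$\nu$ inner'' forces $J=1$, and your observation that $J\in\kk^\times$ is correct (and needed for \eqref{eq:NAK-auto-formula} to define an automorphism). However, the lemma you lean on is false in a case your hypotheses allow: it is \emph{not} true that $\vphi\neq0$ forces the unit group of $W_{(2)}$ to be $\kk^\times$. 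If $\vphi\in\kk^\times$ --- for instance $\vphi=1$, which satisfies $(\vphi,\vphi_1,\vphi_2)=B$ and so lies squarely inside the homologically smooth regime --- then $yx=\vphi$ and $xy=\si(\vphi)$ make $x$ and $y$ invertible, $W_{(2)}\cong B[x^{\pm1};\si]$ is a skew Laurent ring, and its units are all $\lam x^n$ with $\lam\in\kk^\times$, $n\in\inn$. So the step ``units are scalars, hence inner automorphisms are trivial, hence $\nu=\id$'' breaks down precisely on this boundary case, and as written your proof of the ``only if'' direction is incomplete.

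The gap is easily repaired, and the repaired statement is all you need. Since $B$ is a domain and $\vphi\neq0$, $W_{(2)}$ is a $\inn$-graded domain, so every unit is homogeneous, i.e.\ of the form $\lam x^n$ or $\lam y^n$ with $\lam\in\kk^\times$ (and $n\neq0$ is possible only when $\vphi\in\kk^\times$, since $x^ny^n=\si(\vphi)\cdots\si^n(\vphi)$ must then be a unit of $B$). Conjugation by any such unit fixes both $x$ and $y$: this is clear for powers of $x$ acting on $x$, and when $\vphi=\mu\in\kk^\times$ one has $y=\mu x^{-1}$, so $x^nyx^{-n}=y$ (symmetrically for units $\lam y^n$); on $B$ such a conjugation acts by $\si^{\pm n}$. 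Hence if $\nu$ were inner, comparing images of $x$ would give $Jx=x$, i.e.\ $J=1$, in every case --- including the skew Laurent one where your unit-group claim fails. With this substitution your argument is complete and coincides with the paper's intended (unwritten) proof.
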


Since Hochschild established the cohomology theory for associative algebras $A$ \cite{Hochschild:cohomo-asso-alg}, the theory has been studied by many mathematicians. Amongst the developments, a structure on $HH^{\sbu}(A):=\oplus_{n\in\nan}HH^n(A)$ which is a differential graded version of Poisson algebra was found by Gerstenhaber for all algebras $A$. The structure is nowadays called Gerstenhaber algebra. Batalin-Vilkovisky algebras are a subclass of Gerstenhaber algebras arising from theoretical physics. A remarkable relationship between Batalin-Vilkovisky structure and Hochschild cohomology was illustrated by Ginzburg \cite{Ginzburg:CY-alg}, saying that  $HH^{\sbu}(A)$ is a Batalin-Vilkovisky algebra for all Calali-Yau algebras $A$. Later on, this result was generalized for some twisted Calabi-Yau algebras \cite{Kowalzig-Krahmer:BV-twisted-CY}, that is,

\begin{theorem}\cite[Thm.\ 1.7]{Kowalzig-Krahmer:BV-twisted-CY}
	If A is a twisted Calabi-Yau algebra with semisimple (namely, diagonalizable) Nakayama automorphism, then the Hochschild cohomology $HH^{\sbu}(A)$ of $A$ is a Batalin-Vilkovisky algebra.
\end{theorem}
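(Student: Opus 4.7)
The plan is to produce a degree $-1$ operator $\Delta$ on $HH^{\sbu}(A)$ and then verify the BV identity relating $\Delta$ to the cup product and the Gerstenhaber bracket. The starting point is Van den Bergh duality: since $A$ is twisted $d$-Calabi-Yau with Nakayama automorphism $\nu$, for every $A$-bimodule $M$ there is a natural isomorphism
\[
\Psi_M\colon HH^n(A,M)\xrightarrow{\sim} HH_{d-n}(A, A^\nu\ot_A M),
\]
and in particular $\Psi:=\Psi_A\colon HH^n(A)\xrightarrow{\sim} HH_{d-n}(A, A^\nu)$.

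Next I would construct a \emph{twisted Connes differential} $\mathsf{B}_\nu\colon HH_{\sbu}(A,A^\nu)\To HH_{\sbu+1}(A,A^\nu)$ on the homology side and set $\Delta:=\Psi^{-1}\mathsf{B}_\nu\Psi$. The classical Connes operator on $HH_{\sbu}(A)$ arises from cyclic rotation on bar chains; to pass to coefficients in the twisted bimodule $A^\nu$, one must insert the automorphism $\nu$ into the cyclic rotation at the appropriate spot. The semisimplicity hypothesis enters precisely here: when $\nu$ is diagonalizable, the bar complex decomposes into weight spaces under the induced $\langle\nu\rangle$-action, and on each isotypic component the twisted cyclic operator is a scalar multiple of the classical one. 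Consequently, $\mathsf{B}_\nu^2=0$ and compatibility with the Hochschild differential follow weight-space by weight-space from the untwisted identities.

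The main obstacle will be the BV identity itself, namely
\[
\Delta(\alpha\smile\beta)=\Delta(\alpha)\smile\beta+(-1)^{|\alpha|}\alpha\smile\Delta(\beta)+(-1)^{|\alpha|}\{\alpha,\beta\}_G,
\]
where $\smile$ is the cup product and $\{-,-\}_G$ is the Gerstenhaber bracket. Transporting along $\Psi$, this reduces to a compatibility between the cap product $HH^{\sbu}(A)\ot HH_{\sbu}(A,A^\nu)\to HH_{\sbu}(A,A^\nu)$, the twisted Connes operator $\mathsf{B}_\nu$, and Rinehart's classical formula expressing $B$ in terms of the Lie derivative and contraction. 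The semisimplicity of $\nu$ allows one to untwist the desired identity on each weight space and thereby reduce it to Ginzburg's BV identity for genuine Calabi-Yau algebras. I expect the cleanest framework is that of noncommutative differential calculi in the sense of Tamarkin--Tsygan: one verifies that the pair $(HH^{\sbu}(A),HH_{\sbu}(A,A^\nu))$ forms a calculus endowed with Poincar\'e duality, and then invokes the general principle that such a duality automatically upgrades the Gerstenhaber structure on cohomology to a Batalin-Vilkovisky structure.
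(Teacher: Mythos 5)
First, a point of comparison: the paper does not prove this statement at all --- it is quoted verbatim from Kowalzig--Kr\"ahmer \cite[Thm.\ 1.7]{Kowalzig-Krahmer:BV-twisted-CY} and used as a black box to deduce Corollary \ref{cor:BV}. So there is no in-paper argument to match; your proposal has to be judged against the known proofs in the literature (Kowalzig--Kr\"ahmer, and the closely related Lambre--Zhou--Zimmermann argument), whose overall strategy --- transport a twisted Connes operator through Van den Bergh duality and verify the BV identity via a Tamarkin--Tsygan calculus with duality --- your outline does capture correctly.

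The genuine gap is in the step where you claim semisimplicity does its work. You assert that on each $\nu$-isotypic component ``the twisted cyclic operator is a scalar multiple of the classical one,'' so that $\mathsf{B}_\nu^2=0$ and the compatibility $[b,\mathsf{B}_\nu]=0$ follow weight space by weight space. This is not correct: the complex computing $HH_\sbu(A,A^\nu)$ is only \emph{para-cyclic}, since the twisted cyclic rotation $t$ satisfies $t^{n+1}=\nu\ot\cdots\ot\nu$ in degree $n$ rather than $t^{n+1}=\id$, and the fundamental identity reads $b\mathsf{B}_\nu+\mathsf{B}_\nu b=\id-t^{n+1}$. On a weight space of total eigenvalue $\lambda$ this gives $b\mathsf{B}_\nu+\mathsf{B}_\nu b=(1-\lambda)\id$, which is \emph{not} zero for $\lambda\neq 1$; what it does give is that those components are acyclic, so that (using diagonalizability of $\nu$ to obtain the eigenspace decomposition in the first place) the homology is computed entirely by the $\nu$-invariant subcomplex, and only there is $\mathsf{B}_\nu$ an honest mixed-complex differential. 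That is the actual mechanism by which semisimplicity enters, and your sketch as written would ``prove'' $\mathsf{B}_\nu^2=0$ on the whole complex, which is false. A second, smaller omission: to invoke the duality-implies-BV principle you must know that the Van den Bergh isomorphism is given by cap product against a fundamental class in $HH_d(A,A^\nu)$ lying in the $\nu$-invariant part; without this, the transported operator $\Delta=\Psi^{-1}\mathsf{B}_\nu\Psi$ need not satisfy the BV identity with respect to the cup product and Gerstenhaber bracket, and this compatibility is precisely the technical heart of \cite{Kowalzig-Krahmer:BV-twisted-CY}.
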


According to the expression $\eqref{eq:NAK-auto-formula}$, the Nakayama automorphism $\nu$ of $W$ is semisimple. Hence we have

\begin{corollary}\label{cor:BV}
	The Hochschild cohomology $HH^{\sbu}(W_{(2)})$ of $W_{(2)}$ is a Batalin-Vilkovisky algebra if $W_{(2)}$ is homologically smooth.
\end{corollary}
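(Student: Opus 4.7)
The plan is to combine Theorem \ref{thm:3-tCY} with the Kowalzig-Kr\"ahmer theorem stated just above. By homological smoothness and Theorem \ref{thm:3-tCY}, $W_{(2)}$ is twisted $3$-Calabi-Yau with Nakayama automorphism $\nu$ given explicitly by \eqref{eq:NAK-auto-formula}, so what remains is to check that $\nu$ is semisimple in the sense of being diagonalizable over $\kk$.

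The decisive observation is that the Jacobian determinant $J$ of any $\si\in\Aut(\kk[z_1,z_2])$ must belong to $\kk^\times$. I would derive this by applying the chain rule to the identities $z_i=\si^{-1}(\si(z_i))$, which factors the identity matrix as a product of two Jacobian matrices; taking determinants yields $\si^{-1}(J)\cdot J(\si^{-1})=1$ in $\kk[z_1,z_2]$. Since the units of $\kk[z_1,z_2]$ are precisely the nonzero scalars, $J$ is forced to lie in $\kk^\times$.

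Once $J\in\kk^\times$ is in hand, the standard $\inn$-grading on $W_{(2)}$ given by $|x|=1$, $|y|=-1$, $|z_1|=|z_2|=0$ is preserved by $\nu$, and \eqref{eq:NAK-auto-formula} shows that $\nu$ acts on the degree-$n$ homogeneous component by the scalar $J^n$. Hence the monomial basis $\{z_1^{i_1}z_2^{i_2}x^j,\ z_1^{i_1}z_2^{i_2}y^j\mid i_1,i_2,j\geq 0\}$ of $W_{(2)}$ consists of simultaneous eigenvectors of $\nu$, so $\nu$ is diagonalizable. Invoking \cite[Thm.~1.7]{Kowalzig-Krahmer:BV-twisted-CY} then endows $HH^{\sbu}(W_{(2)})$ with a Batalin-Vilkovisky structure. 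There is no serious obstacle in the argument; the only small point requiring a remark is the scalar-Jacobian property of polynomial automorphisms in two variables.
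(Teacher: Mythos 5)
Your proposal is correct and follows essentially the same route as the paper: invoke Theorem \ref{thm:3-tCY} to get the twisted $3$-Calabi-Yau property under homological smoothness, observe from \eqref{eq:NAK-auto-formula} that $\nu$ is semisimple, and apply \cite[Thm.~1.7]{Kowalzig-Krahmer:BV-twisted-CY}. Your only addition is the (correct) justification via the chain rule that $J\in\kk^\times$, a standard fact about automorphisms of $\kk[z_1,z_2]$ which the paper takes for granted.
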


\begin{remark}
	Note that the defining automorphism $\si\in\Aut(\kk[z])$ of $W_{(1)}$ is necessarily determined by $\si(z)=\lam z+\eta$ for some $\lam\in\kk^\times$, $\eta\in \kk$. One of the results in the author's previous paper \cite{L:gwa-def} is that the Nakayama automorphism of $W_{(1)}$ is given by
	\[
	x\mapsto \lam x,\quad y\mapsto \lam^{-1}y, \quad z\mapsto z.
	\]
	Obviously we have $J=\lam$ in this case. It seems reasonable to conjecture that the analogy exists for all GWA $W_{(n)}$ if $n$ is any positive integer. This will be our future work.
\end{remark}

\section{Examples}\label{sec:app}

In this section, we apply Theorem \ref{thm:main-theorem} to concrete algebras, judging them smooth or not. Most results are known, obtained by other people in different manners.

\subsection{Quantum groups $\mc{O}_q(SL_2)$ and $U(\mf{sl}_2)$}

The definitions of these well-known algebras can be found, for example, in \cite{Kassel:quan-gp} and it is well-known \cite{Bavula:GWA-tensor-product} that they are GWA as discussed in this paper. Thus Theorems \ref{thm:main-theorem} and \ref{thm:3-tCY} show that they are homologically smooth and determine their Nakayama automorphisms. These theorems therefore reproduce \cite[\S6]{Brown-Zhang:AS-Gorenstein-Hopf} where the detailed formulas can be found.

\subsection{Noetherian down-up algebras}

Motivated by the study of posets, Benkart and Roby \cite{Benkart-Roby:du-alg} introduced the notion of a down-up algebra $A(\al,\be,\ga)$. Down-up algebras have been intensively studied in for example \cite{Benkart-Witherspoon:hopf-du-alg}, \cite{Solotar:hoch-(co)homo-down-up}, \cite{Kirkman-Musson-Passman:noeth-du-alg}, \cite{Kulkarni:du-alg-repre} among many other articles. It is shown in \cite{Kirkman-Musson-Passman:noeth-du-alg} that $A(\al,\be,\ga)$ is right (or left) noetherian if and only if $\be \neq 0$. Also in \cite{Kirkman-Musson-Passman:noeth-du-alg}, a noetherian down-up algebra $A(\al,\be,\ga)$ is a GWA. Thus Theorems \ref{thm:main-theorem} and \ref{thm:3-tCY} show that $A(\al,\be,\ga)$ is homologically smooth and determine the Nakayama automorphism, reproducing the formula shown in \cite{Lv-Mao-Zhang:NAK}, \cite{Shen-Lu:Nak-PBW}.

\subsection{A quotient algebra of $M(p,q)$}\label{subsec:Mpq}

In \cite{Chen:example} a noncommutative and noncocommutative bialgebra $M(p,q)$ for two parameters $p$ and $q$ is constructed. The algebra is generated by four elements $a$, $b$, $c$, $d$ satisfying some relations similar to those of the quantum matrix algebra $M_q(2)$. Concretely, these relations are:
\begin{gather*}
ba=qab,\quad dc=qcd,\quad ca=qac,\\ db=qbd,\quad bc=cb,\quad da-qad=p(1-bc).
\end{gather*}
The element $u=da-p(qbc+1)/(1-q)$ is normal regular in $M(p,q)$. It is not hard to check that the quotient algebra $N(p,q)=M(p,q)/(u)$ is realized as a GWA over $\kk[b,c]$. By Theorem \ref{thm:main-theorem} $N(p,q)$ is homologically smooth if and only if $p\neq 0$.

\begin{remark}
	The homological smoothness of $N(1,q)$ is studied by Shengyun Jiao, and the related results appear in her Master Thesis \cite{Jiao:Master-Thesis}, under the direction of the author.
\end{remark}

\subsection{Quantum lens space and quantum Seifert manifold}
Let us consider two algebras which can be regarded as coinvariant of Hopf algebras. They are the quantum lens space $\mc{O}_q(L(l;1,l))$ where $l$ is a positive integer, and the quantum Seifert manifold $\mc{O}_q(\Sigma^3)$. For their background, we refer to \cite{Hong-Szymanski:lens} and \cite{Brzezinski-Zielinski:Seifert} respectively. We should remark that both algebras were defined as $\con^*$-algebras originally; but here we adapt to an arbitrary base field $\kk$. Both of them are GWA as discussed in this paper. So they are homologically smooth by theorem \ref{thm:main-theorem}. This fact was first obtained by Brzezi{\'n}ski in \cite{Brzezinski:smooth-teardrop}, using a completely different manner.

%\bibliography{GWA}
%\bibliographystyle{amsplain}

\end{document}